\newtheorem{thm}{Theorem}[section]
\newtheorem{cor}[thm]{Corollary}
\newenvironment{proof}[1][,]{\medskip\ifcat,#1
\noindent{{\it Proof\/}:\ }\else\noindent{\it Proof of #1.\ }\fi}
{\hfill$\square$\medskip}
\newenvironment{dfn}{\medskip\refstepcounter{thm}
\noindent{\bf Definition \thesection.\arabic{thm}\ }}{\medskip}
\newtheorem{prop}[thm]{Proposition}
\newenvironment{ex}{\medskip\refstepcounter{thm}
\noindent{\bf Example \thesection.\arabic{thm}\ }}{\medskip}
\newcounter{condition}
\newenvironment{note}[1][Note]{\begin{trivlist}
\item[\hskip \labelsep {\bfseries #1}]}{\end{trivlist}}
\newenvironment{conds*}[1][Conditions]{\begin{trivlist}
\item[\hskip \labelsep {\bfseries #1}]}{\end{trivlist}}
\newenvironment{remark}[1][Remark]{\begin{trivlist}
\item[\hskip \labelsep {\bfseries #1}]}{\end{trivlist}}
\newtheorem{lem}[thm]{Lemma}
\newenvironment{remarks}[1][Remarks]{\begin{trivlist}
\item[\hskip \labelsep {\bfseries #1}]}{\end{trivlist}}
\newenvironment{prob}[1][Problem]{\begin{trivlist}
\item[\hskip \labelsep {\bfseries #1}]}{\end{trivlist}}
\newenvironment{ack}[1][Acknowledgements]{\begin{trivlist}
\item[\hskip \labelsep {\bfseries #1}]}{\end{trivlist}}
\def\eq#1{{\rm(\ref{#1})}}
\def\d{{\rm d}}
\def\R{\mathbb{R}}
\def\w{\wedge}
\DeclareMathOperator\supp{supp}
\DeclareMathOperator\Span{Span}
\def\Z{\mathbb{Z}}
\DeclareMathOperator\vol{vol}
\DeclareMathOperator\Imm{Im}
\DeclareMathOperator\Ker{Ker}
\DeclareMathOperator\GG{G}
\def\N{\mathbb{N}}
\DeclareMathOperator\SU{SU}
\def\P{\mathbb{P}}
\def\C{\mathbb{C}}
\DeclareMathOperator\Stab{Stab}
\DeclareMathOperator\GL{GL}
\DeclareMathOperator\SO{SO}
\DeclareMathOperator\Hol{Hol}
\DeclareMathOperator\id{id}
\DeclareMathOperator\ind{ind}
\DeclareMathOperator\AAA{A}
\begin{document}

\title{Desingularization of Coassociative 4-folds with Conical Singularities: Obstructions and Applications}
\author{\textsc{Jason D. Lotay\footnote{The author is supported by an EPSRC Career Acceleration Fellowship.}
}\\ University College London}
\date{}

\maketitle

\begin{abstract}
\noindent  We study the problem of desingularizing coassociative conical singularities via gluing, allowing for topological and 
analytic obstructions, and discuss applications.  This extends work in \cite{Lotaydesing} on the unobstructed case.  
We interpret the analytic obstructions geometrically via the obstruction theory for deformations of conically singular coassociative 4-folds, and thus
relate them  
to the stability of the 
singularities.  We use our results to describe the relationship between  moduli 
spaces of coassociative 4-folds with conical singularities and those of their desingularizations.  We also apply our theory in  examples, including to the known conically singular coassociative 4-folds  in compact holonomy $\GG_2$ manifolds.
\end{abstract}

\section{Introduction}

Coassociative 4-folds are calibrated 4-dimensional submanifolds in 7-manifolds with exceptional holonomy $\GG_2$ (and, more generally, in 7-manifolds with a $\GG_2$ structure).
Studying gluing problems for calibrated submanifolds in manifolds with special holonomy has proven to be a rich and fruitful avenue of research, 
 particularly in special Lagrangian geometry in the work of Joyce \cite{JoyceCS3, JoyceCS4}, Haskins and Kapouleas \cite{HaskinsKapouleas} and 
Pacini \cite{PaciniSL2}, as well as for associative \cite{Nordstrom} and coassociative geometry 
\cite{Lotaydesing}.  In particular, the desingularization problem for calibrated submanifolds with 
conical singularities naturally feeds into the understanding of the 
boundary of the moduli space of smooth calibrated submanifolds, and has crucial consequences for the construction of potential invariants for manifolds with special 
holonomy by suitable ``counting'' of calibrated submanifolds (see, for example, \cite{Joycecount, JoyceCS5} for a discussion of these issues in the special Lagrangian case).

In any gluing problem one naturally has to tackle the issue of \emph{obstructions}.  This is normally achieved by making strong assumptions on the geometry of the 
submanifolds to be glued, for example in \cite{JoyceCS3, Lotaydesing, PaciniSL2}, or by restricting to situations where the obstructions can 
be identified and resolved in a natural way, either via topological conditions (as in \cite{JoyceCS4}) or symmetries of the problem (as in \cite{HaskinsKapouleas}).  In this article we extend the work in \cite{Lotaydesing} and consider a gluing problem in coassociative geometry where we deal with both topological and analytic obstructions in desingularizing isolated conical singularities.  We interpret the analytic obstructions geometrically using the deformation theories of asymptotically conical and
conically singular coassociative 4-folds developed in \cite{LotayCS, LotayAC}.  We thus relate our obstructions to the notion of \emph{stability} of coassociative 
conical singularities introduced in \cite{Lotaystab}.  

We use our desingularization results to help describe how the moduli spaces of asymptotically conical, conically singular and smooth compact coassociative 4-folds are related, and thus provide a greater understanding of the boundary of the 
moduli space of compact coassociative 4-folds.  In the case of stable conical singularities, this enables us to construct a local diffeomorphism 
 between the gluing data associated to the singular coassociative 4-fold and a neighbourhood of the coassociative smoothing ``near the boundary'' of the moduli 
 space.  

We also discuss examples where our desingularization theory applies, including the first known examples of coassociative 4-folds with conical singularities in 
compact manifolds with $\GG_2$ holonomy, which were constructed in \cite{Lotaystab}.
 
\medskip

The setting in this article is the following.  We have a coassociative 4-fold $N$ in an almost $\GG_2$ manifold $M$ (a 7-manifold with a closed $\GG_2$ structure) and we suppose that $N$ 
has a single conical singularity $z$ modelled on a cone $C$.  We also assume there exists a coassociative 4-fold $A$ in $\R^7$ which is asymptotically conical with rate $\lambda<-\frac{1}{2}$ to 
the cone $C$.  (See $\S$2 for precise definitions.)  

\medskip

In Definition \ref{matchingdfn} we give a \emph{matching condition} between $A$ and $N$.   The matching condition is a mixture of topological and analytic constraints, which then allows us to deal with both topological and analytic obstructions.  

 To give a sense of the matching condition we make some observations.
 Let $\Sigma$ be the link of $C$ and let $j^A_2:H^2(A)\rightarrow H^2(\Sigma)$ and $j_2^N:H^2(\hat{N})\rightarrow H^2(\Sigma)$ be the induced maps arising from 
inclusion of $\Sigma$ in $A$ and $\hat{N}=N\setminus\{z\}$ (the non-compact manifold given by removing the singularity from $N$). 

If $\varphi_0$ is the standard $\GG_2$ structure on $\R^7$, then since $\varphi_0$ is closed and $\varphi_0|_A=0$ we have an element $[\varphi_0]\in H^3(\R^7,A)\cong H^2(A)$, which we may also 
view as the cohomology class of the infinitesimal deformation of $A$ corresponding to dilation.  The natural topological constraint is therefore that $j_2^A[\varphi_0]$ lies in $\Imm j_2^N$.  

We can relate the analytic obstructions to the obstruction theory of $N$ and thus to the notion of \emph{$\mathcal{C}$-stability} of the cone $C$ (see Definition 
\ref{stabdfn}) for a deformation family $\mathcal{C}$ of $C$   -- this condition is discussed in detail in \cite{Lotaystab}.  

Overall, we have the following interpretation of the matching condition.  

\begin{prop}\label{mainthm3}
If $j_2^A[\varphi_0]\in \Imm j_2^N$ and the cone $C$ at the singularity is $\mathcal{C}$-stable, then the matching condition is satisfied.
\end{prop}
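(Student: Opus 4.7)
The plan is to unpack Definition \ref{matchingdfn} as a conjunction of a topological constraint and an analytic constraint, and to derive each from one of the two hypotheses in the proposition. Based on the discussion preceding the proposition, the topological half of the matching condition is essentially the statement $j_2^A[\varphi_0]\in\Imm j_2^N$, so this half is handed to us directly by hypothesis: pick any lift $\beta\in H^2(\hat{N})$ with $j_2^N\beta=j_2^A[\varphi_0]$ and use $\beta$ to supply the topological part of the matching datum.

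Next I would translate the analytic half of the matching condition into the deformation theory of $N$ developed in \cite{LotayCS}. The linearization of the coassociative condition at the conically singular $N$ is a Fredholm operator between appropriate weighted Sobolev spaces whose cokernel---the obstruction space of $\hat{N}$---decomposes in terms of the indicial roots of the asymptotic cone $C$, each obstruction being encoded by a harmonic form on the link $\Sigma$ of the appropriate critical weight. The analytic part of Definition \ref{matchingdfn} should then amount to the demand that the obstruction class induced by the asymptotic model $A$ and the class $[\varphi_0]\in H^2(A)$ lie in the image of the infinitesimal deformations of $N$ coming from the family $\mathcal{C}$.

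The key step is to invoke $\mathcal{C}$-stability, in the sense of \cite{Lotaystab}, which says precisely that infinitesimal deformations of the cone $C$ within $\mathcal{C}$ surject onto the piece of the obstruction space at the critical indicial rate. Combining this surjectivity with the topological lift $\beta$ chosen in the first step, one produces an infinitesimal deformation inside $\mathcal{C}$ whose cohomological image realizes $j_2^A[\varphi_0]$ and whose associated obstruction class cancels the one induced by $A$, which is exactly what the analytic half of the matching condition requires.

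I expect the main obstacle to be the bookkeeping. One has to construct a commutative diagram relating three obstruction-theoretic objects: the space appearing in Definition \ref{matchingdfn}, the cokernel of the linearized conically singular deformation operator on $\hat{N}$ from \cite{LotayCS}, and the parameter space of $\mathcal{C}$-deformations of $C$ from \cite{Lotaystab}. The class $j_2^A[\varphi_0]$ arising from the asymptotically conical theory of \cite{LotayAC} must be tracked consistently through all three, using compatibility of the indicial-root decompositions on the $A$-end and the $\hat{N}$-end at the common link $\Sigma$. Once this diagram is in place, the topological hypothesis provides the lift and $\mathcal{C}$-stability provides the surjection, and together they force the matching condition to hold.
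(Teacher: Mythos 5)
Your outline correctly splits Definition \ref{matchingdfn} into a topological and an analytic half, and correctly guesses that the two hypotheses address them in turn, but the mechanism you describe in both halves does not match what is actually needed. On the topological side, choosing a cohomology class $\beta\in H^2(\hat{N})$ with $j_2^N\beta=j_2^A[\varphi_0]$ does not by itself supply the matching datum: Definition \ref{matchingdfn} asks for a genuine closed \emph{self-dual} 2-form $\alpha_N^1$ on $\hat{N}$ asymptotic (with explicit decay $O(r^{-2+\delta_0-j})$) to the rate $-2$ piece $\alpha_A^1$ of $\alpha_A^0$. The paper first identifies $j_2^A[\varphi_0]$ with $[\alpha_\Sigma^0]$ via Proposition \ref{infdilprop}, and then constructs the form in Proposition \ref{alphaSigmaprop} by pulling back $\alpha_\Sigma^0$, symmetrizing with the Hodge star, and correcting by a self-dual form supplied by the identity $\d\big(L^2_{k,-2+\delta_0}(\Lambda^2T^*\hat{N})\big)=\d\big(L^2_{k,-2+\delta_0}(\Lambda^2_+T^*\hat{N})\big)$ at rate just above $-2$ (which holds unconditionally by Lemma \ref{mu-lem}). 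None of this is automatic from the existence of the class $\beta$.

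On the analytic side there is a more serious gap. Your claim that $\mathcal{C}$-stability ``says precisely that infinitesimal deformations of the cone $C$ within $\mathcal{C}$ surject onto the piece of the obstruction space at the critical indicial rate'' mischaracterizes Definition \ref{stabdfn}: stability is the vanishing of the numerical index $\ind_{\mathcal{C}}(C)$. Its role in the proof is indirect and runs through the vanishing of the CS obstruction space: $\ind_{\mathcal{C}}(C)=0$ implies, via \cite[Proposition 4.11]{Lotaystab}, that $\mathcal{O}(N,\mu_+)=\{0\}$ for all $\mu_+\in(-2,0)$; and Proposition \ref{kerOprop} shows that $\mathcal{O}(N,\mu_+)=\{0\}$ is \emph{equivalent} to every homogeneous closed self-dual 2-form on $C$ of rate in $(-2,\mu_+)$ lifting to a closed self-dual 2-form on $\hat{N}$, which is exactly the analytic content of Definition \ref{matchingdfn} for the rates $\lambda_i\in(-2,\lambda]$. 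There is no ``cancellation'' of obstruction classes by deforming the cone inside $\mathcal{C}$ --- the cone $C$ is fixed throughout Definition \ref{matchingdfn}, so no such deformation is available. Without identifying the obstruction space $\mathcal{O}(N,\lambda)$, proving that stability kills it, and then invoking the equivalence in Proposition \ref{kerOprop}, your argument does not close. This chain of three facts is the actual content of Proposition \ref{stabprop}, from which the statement is deduced.
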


We show that the matching condition allows us to desingularize $N$ using $A$ via gluing, giving our main result.

\begin{thm}\label{mainthm1}
If the matching condition is satisfied, there exists $\tau>0$ such that for all $t\in(0,\tau)$ there is a smooth 
compact coassociative 4-fold $N(t)$ in $M$, formed by gluing $tA$ and $N$, such that $N(t)$ converges to $N$ in the sense of currents as $t\rightarrow 0$.
\end{thm}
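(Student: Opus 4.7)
The plan is the standard pre-glue, linearise, and perturb strategy, with the matching condition supplying the cohomological and analytic data needed to invert the linearisation in the presence of obstructions (whose complete vanishing was assumed in \cite{Lotaydesing}).

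First I would construct the pre-glued 4-fold $N'(t)\subset M$. Working in a singular chart around $z$, the hypothesis that $A$ is asymptotically conical to $C$ with rate $\lambda<-\frac12$ means that $tA$ agrees with $C$ to order $O(t^{1-\lambda}r^\lambda)$ at radius $r$, while $N$ agrees with $C$ to a comparable order near $z$ by the conical singularity hypothesis. Choosing $\alpha\in(0,1)$ and interpolating the two graphs over $C$ by a cut-off on an annular neck at radius $r\sim t^\alpha$ produces a smooth compact 4-manifold $N'(t)$ which is coassociative outside the neck and whose coassociativity error $\varphi|_{N'(t)}$ is small in an appropriate weighted Hölder norm; the rate $\lambda<-\frac12$ is precisely what forces this error to decay fast enough in $t$ for the subsequent perturbation argument.

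Next I would recast coassociative deformations of $N'(t)$ as a nonlinear elliptic equation on self-dual 2-forms. For $\alpha\in\Gamma(\Lambda^2_+T^*N'(t))$ small, the graph of $\alpha$ under the usual tubular neighbourhood identification of the normal bundle with $\Lambda^2_+T^*N'(t)$ is coassociative if and only if
\[
F_t(\alpha)=L_t\alpha+Q_t(\alpha)+E_t=0,
\]
where $L_t$ is a $(d+d^*)$-type elliptic operator on $\Lambda^2_+$, $E_t=\varphi|_{N'(t)}$, and $Q_t$ is quadratic in $\alpha$. The matching condition enters precisely here. Its topological half, $j_2^A[\varphi_0]\in\Imm j_2^N$, forces the class of $E_t$ in the relevant weighted de Rham complex on $N'(t)$ to vanish, so that $E_t\in\Imm L_t$; its analytic half, identified via Proposition \ref{mainthm3} with $\mathcal{C}$-stability of the cone $C$, allows the remaining finite-dimensional cokernel of $L_t$ to be absorbed by perturbing $N$ and $A$ inside the moduli spaces of \cite{LotayCS,LotayAC}.

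Once the obstruction has been removed, a right inverse $L_t^{-1}$ with operator norm uniformly bounded in $t$, together with a uniform quadratic estimate on $Q_t$, reduces the equation to a contraction mapping $\alpha\mapsto -L_t^{-1}\bigl(Q_t(\alpha)+E_t\bigr)$ on a small ball in the appropriate weighted Hölder space, producing the unique small solution $\alpha_t$ whose graph is the desired smooth compact coassociative $N(t)\subset M$. Current convergence $N(t)\to N$ as $t\to 0$ is then immediate: off any fixed neighbourhood of $z$, $N(t)$ converges smoothly to $N$, while inside, the glued-in piece has 4-volume scaling as $t^4$. The hard part is the obstruction analysis: pinning down which components of $\Ker L_t^*$ correspond to the topological versus analytic halves of the matching data, and obtaining uniformity in $t$ of both the right inverse and the cokernel identification across the shrinking neck, is where the interplay between the asymptotically conical deformation theory of $A$ and the conically singular deformation theory of $N$ is essential and most delicate.
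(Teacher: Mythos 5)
Your overall framework (pre-glue along an annular neck at radius $r\sim t^{\nu}$, rewrite coassociativity as a fixed-point equation on self-dual 2-forms, apply the contraction mapping theorem) is the right one and matches the paper's. However, there is a central claim in your argument that is false and whose correction is precisely what the paper is about.

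You assert that ``a right inverse $L_t^{-1}$ with operator norm uniformly bounded in $t$'' exists once the obstruction has been removed. This is not true in general. The paper's Sobolev embedding inequality (Theorem \ref{apKerthm}) and the resulting bound on the inverse (Theorem \ref{invthm}) give $\|P_{\d}\|\leq C(\tilde{N})\,t^{-\delta(1-\nu)}$, where $\delta>0$ precisely when $-2\in\mathcal{D}$, equivalently $b^1(\Sigma)\neq 0$. The inverse genuinely blows up as $t\to 0$; the forms in $\mathcal{K}^{\mathcal{O}}$ (closed self-dual 2-forms on $A$ of $L^2$ growth rate $-2$ whose asymptotic cohomology class does not lie in $\Imm j_2^N$) are responsible, and the matching condition does not make that space vanish. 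The theorem is proved by showing that the coassociativity error of the pre-glued $\tilde{N}(t)$ decays faster than the blow-up of the inverse: the matching condition is used in the \emph{construction} of $\tilde{N}(t)$ (Definition \ref{tildeNdfn}), replacing the naive cut-off of $tA$ against $N$ by a gluing whose leading-order 2-form over the cone is closed by building in the forms $\alpha_N^i$ from Definition \ref{matchingdfn}. This drops $\|\varphi|_{\tilde{N}(t)}\|_{L^2_{3,-3+\delta,t}}$ to $O(t^{\nu(3-\delta)+2(1-\lambda)(1-\nu)})$ (Proposition \ref{phiestprop}), which dominates $t^{\delta(1-\nu)}$ when $\lambda<-\tfrac12$ and $\nu$ is chosen in the window \eqref{nueq}. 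Without this balance (Theorem \ref{fpthm}), the contraction argument fails.

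Two further misattributions: first, the exactness of $\varphi|_{\tilde{N}(t)}$ (your $E_t$) does not follow from the topological matching condition; it holds automatically because $A$ and $N$ are coassociative (Definition \ref{defmapdfn}). Second, the cokernel is not removed by ``perturbing $N$ and $A$ inside their moduli spaces''; instead, the topological and analytic parts of the matching condition are used to build the approximate kernel $\tilde{\mathcal{K}}_{\text{ap}}(t)$ (Definitions \ref{apKerdfn1}--\ref{apKerdfn}) by interpolating matched $L^2$-critical closed self-dual 2-forms from $A$ and $\hat{N}$, and to verify that this approximate kernel has exactly dimension $b^2_+(\tilde{N}(t))$ via the Mayer--Vietoris computation of Theorem \ref{b2+thm}. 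Filling the gap requires both the precise quantitative decomposition of $\alpha_A$ in Proposition \ref{alphaAprop} and the choice of the weight $\delta$ and neck exponent $\nu$ to make the estimates compatible; these are absent from your sketch.
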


By deforming $N(t)$ we obtain a family of desingularizations of $N$ whose dimension we can determine from the topology of $A$ and $N$.  Recall 
that for a noncompact 4-manifold we define $b^2_+$ by considering the cup product on cohomology classes representing by compactly supported 2-forms.

\begin{cor}\label{mainthm2}
If the matching condition is satisfied, we have a smooth family of nearby compact coassociative smoothings of $N$ of dimension $b^2_+(A)+b^2_+(\hat{N})+\dim(\Imm j_2^A\cap\Imm j_2^N)$.
\end{cor}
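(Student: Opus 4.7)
The argument proceeds in three parts.

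First, for each $t \in (0,\tau)$ Theorem \ref{mainthm1} produces a smooth compact coassociative 4-fold $N(t)$ in $M$ desingularizing $N$.

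Second, I would invoke McLean's deformation theorem (valid for the closed $\GG_2$-structure setting of an almost $\GG_2$ manifold): the deformation theory of a smooth compact coassociative 4-fold is unobstructed, so the moduli space of compact coassociative deformations of $N(t)$ is a smooth manifold of dimension $b^2_+(N(t))$. Applied to $N(t)$, this gives the required smooth family of nearby coassociative smoothings of $N$.

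Third, it remains to compute $b^2_+(N(t))$ in terms of the topology of $A$ and $\hat N$. Diffeomorphically $N(t)$ is independent of $t$ for small $t$ and is obtained by truncating $A$ and $\hat N$ at a common compact collar of the link $\Sigma$ and gluing. The resulting Mayer--Vietoris sequence
\begin{equation*}
H^1(\Sigma) \to H^2(N(t)) \to H^2(A) \oplus H^2(\hat N) \xrightarrow{j_2^A - j_2^N} H^2(\Sigma)
\end{equation*}
exhibits $H^2(N(t))$ as built from three pieces: (i) classes represented by compactly supported 2-forms on $A$, (ii) classes represented by compactly supported 2-forms on $\hat N$, and (iii) ``gluing classes'' obtained from matched pairs $(\alpha,\gamma)$ with $j_2^A\alpha = j_2^N\gamma \neq 0$, of which there are $\dim(\Imm j_2^A\cap\Imm j_2^N)$ linearly independent modulo the compactly supported part. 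Classes in (i) and (ii) are mutually orthogonal for the intersection form since they have disjoint supports, so together they contribute $b^2_+(A) + b^2_+(\hat N)$ to $b^2_+(N(t))$.

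The main obstacle is to show that the gluing classes in (iii) contribute exactly $\dim(\Imm j_2^A\cap\Imm j_2^N)$ further \emph{positive} directions, orthogonal to those in (i) and (ii). The plan is to choose representatives of these classes coming from genuine infinitesimal coassociative deformations of $A$ and $\hat N$: on each piece these correspond to self-dual harmonic 2-forms of the prescribed asymptotic type, and this is exactly where Proposition \ref{mainthm3} and the matching condition enter, since they provide the required matched infinitesimal deformations. Gluing these using cutoffs produces 2-forms on $N(t)$ that are self-dual to leading order as $t \to 0$, with self-pairings that are positive and asymptotically orthogonal to the compactly supported contributions. Adding up the three contributions then yields $b^2_+(N(t)) = b^2_+(A) + b^2_+(\hat N) + \dim(\Imm j_2^A\cap\Imm j_2^N)$, as required.
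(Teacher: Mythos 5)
Your overall route is the same as the paper's: Theorem \ref{mainthm1} supplies $N(t)$, unobstructedness of compact coassociative deformations gives a smooth family of dimension $b^2_+(N(t))$, and a Mayer--Vietoris argument computes $b^2_+(N(t))$. The paper packages the first two steps slightly differently -- it applies the Implicit Function Theorem to the deformation map $F_t$ at the fixed point $\tilde{\alpha}(t)$ and parameterises $\Ker F_t$ by the approximate kernel $\tilde{\mathcal{K}}_{\text{ap}}(t)\cong\mathcal{H}^2_+(\tilde{N}(t))$ -- but this is the same content as invoking Theorem \ref{cptdefthm}, and the substance of the corollary is Theorem \ref{b2+thm}.

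There is, however, a gap in your topology count. Your decomposition of $H^2(N(t))$ into pieces (i)--(iii) omits the image of $H^1(\Sigma)$ under the connecting homomorphism $\tilde{\partial}_1$, which contributes a further $\dim(\Imm j_2^A\cap\Imm j_2^N)$ dimensions (these classes die on restriction to both $A$ and $\hat{N}$, so they are invisible in your list). The correct count, as in Theorem \ref{b2+thm}, is $b^2(N(t))=\dim\mathcal{J}(A)+\dim\mathcal{J}(\hat{N})+2\dim(\Imm j_2^A\cap\Imm j_2^N)$. Because of this, producing $\dim(\Imm j_2^A\cap\Imm j_2^N)$ positive ``gluing'' directions, as you propose, only establishes a \emph{lower} bound on $b^2_+(N(t))$: you must also account for the remaining $\dim(\Imm j_2^A\cap\Imm j_2^N)$ directions and show they are negative. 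The paper does this by observing (Proposition \ref{alphaSigmaprop} and its analogue on $A$) that each class in $\Imm j_2^A\cap\Imm j_2^N$ lifts to a matched pair of closed \emph{self-dual} forms and also to a matched pair of closed \emph{anti-self-dual} forms, so that $b^2_{\pm}$ each increase by exactly $\dim(\Imm j_2^A\cap\Imm j_2^N)$, consistent with Novikov additivity of the signature. A further small misattribution: the matched self-dual representatives needed here come from the purely topological fact that the relevant classes lie in $\Imm j_2^N$ (Proposition \ref{alphaSigmaprop}), not from the full analytic matching condition or Proposition \ref{mainthm3}; the matching condition is what makes the gluing construction of Theorem \ref{mainthm1} work, not the computation of $b^2_+$.
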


This shows that the moduli space of smooth compact coassociative 4-folds can be non-compact and that 
coassociative 4-folds with conical singularities can arise on the boundary of the moduli space.  Moreover, we have
 a \emph{gluing map} 
from the moduli space of matching pairs $(N,tA)$ into the moduli space of smooth compact coassociative 4-folds, from which we may deduce the following.  

\begin{prop}\label{mainthm4}
If $C$ is stable, the gluing map is a local diffeomorphism. 
\end{prop}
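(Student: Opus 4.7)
The plan is to apply the inverse function theorem. I would show: (i) the gluing map $\Phi$ sending a matching pair $(N',A',t)$ to the smoothing $N'(t)$ is smooth, (ii) its source and target moduli spaces are smooth manifolds of equal dimension near the basepoint, and (iii) its differential $d\Phi$ at $(N,A,t)$ is injective. Smoothness of $\Phi$ itself should follow from the implicit function theorem setup underlying Theorem \ref{mainthm1}, since the gluing datum $N(t)$ is produced as the zero of a smooth map parameterized by the inputs.

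For the target dimension, McLean's theorem gives that the moduli space of smooth compact coassociative 4-folds is smooth of dimension $b^2_+(N(t))$ near $N(t)$, which by Corollary \ref{mainthm2} equals $b^2_+(A)+b^2_+(\hat{N})+\dim(\Imm j_2^A\cap\Imm j_2^N)$. For the source, stability of $C$ together with \cite{Lotaystab} kills the analytic obstructions in both the CS deformation theory of \cite{LotayCS} and the AC deformation theory of \cite{LotayAC}, so the CS moduli of $N$ and the AC moduli of $A$ at rate $\lambda<-\frac{1}{2}$ are each smooth of the expected dimension. The matching condition cuts out a smooth submanifold of their product, and together with the rescaling parameter $t$ a careful index count should reproduce exactly the target dimension.

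To establish injectivity of $d\Phi$, I would argue that a kernel element $(\dot{N},\dot{A},\dot{t})$ gives a trivial infinitesimal deformation of $N(t)$. Because the gluing construction is defined explicitly on complementary regions, namely $N$ away from $z$ and $tA$ inside a neighbourhood of the singularity, matched on a neck, the triviality pulls back to CS and AC infinitesimal deformations of $N$ and $A$ that vanish on the respective cores. Stability ensures these deformations are represented by harmonic self-dual 2-forms in the relevant weighted spaces, so vanishing on the cores forces $\dot{N}=0$ and $\dot{A}=0$; the scaling behaviour of the neck region then forces $\dot{t}=0$. Combined with the dimension equality this promotes $d\Phi$ to an isomorphism.

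The main obstacle I anticipate is the dimension bookkeeping in step (ii). One must simultaneously keep track of the CS obstruction space (killed by stability), the topological matching data $\Imm j_2^A\cap\Imm j_2^N$ and the requirement that $j_2^A[\varphi_0]\in \Imm j_2^N$, the weight-dependent AC index formula from \cite{LotayAC}, and the scaling freedom in $t$, and verify that they conspire to give precisely $b^2_+(A)+b^2_+(\hat{N})+\dim(\Imm j_2^A\cap\Imm j_2^N)$. Once this cohomological identification is made, the inverse function theorem completes the argument.
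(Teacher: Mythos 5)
Your overall strategy coincides with the paper's: both arguments reduce the statement to showing that $\d G$ at $(N,A)$ is an isomorphism between tangent spaces of equal dimension. However, the step you explicitly defer --- ``a careful index count should reproduce exactly the target dimension'' --- is precisely the content of the paper's proof and is exactly where the stability hypothesis is consumed, so leaving it unverified is a genuine gap. Concretely, the paper computes
$\dim\mathcal{M}(N)=b^2_+(\hat{N})-\sum_{\upsilon\in(-2,-1]}d_{\mathcal{D}}(\upsilon)$
(the obstruction space vanishes and the CS index formula of Theorem \ref{CSdefthm} has $\ind(C)=0$), and
$\dim\mathcal{M}(A)=b^2_+(A)+\dim(\Imm j_2^A\cap\Imm j_2^N)+\sum_{\upsilon\in(-2,-1]}d_{\mathcal{D}}(\upsilon)$,
where the sums cancel and, crucially, $d_{\mathcal{D}}(\upsilon)=0$ for $\upsilon\in(-1,-\frac{1}{2})$ \emph{because} $C$ is stable. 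Without stability the paper shows the source falls short of the target by $\ind(C)-\sum_{\upsilon\in(-1,-\frac{1}{2})}d_{\mathcal{D}}(\upsilon)\geq 0$, so the dimension match is not a formality: it is the assertion that can fail. You must also be careful that in the paper's Definition \ref{gmapdfn} the scaling parameter $t$ is absorbed into $\mathcal{M}(A)$ (dilations $tA'$ are themselves AC deformations, with infinitesimal generator the closed form $3\alpha_A$ of rate $\leq -\frac{1}{2}$, hence already tangent to $\mathcal{M}(A)$); treating $(N',A',t)$ as an independent triple risks overcounting the source dimension by one.

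Your injectivity argument also differs from the paper's and is not quite right as stated. The infinitesimal deformation of $N(t)$ induced by $(\dot{N},\dot{A})$ is not literally the restriction of $\dot{N}$ and $\dot{A}$ to complementary cores: the glued submanifold is $\tilde{N}(t)$ perturbed by the fixed point $\tilde{\alpha}(t)$ of the contraction map, and $\d G$ carries the variation of this correction term, so a kernel element does not directly yield CS and AC deformations vanishing on the cores. The paper avoids this by instead exhibiting an explicit isomorphism $T_N\mathcal{M}(N)\oplus T_A\mathcal{M}(A)\rightarrow T_X\mathcal{M}(X)$: closed self-dual $2$-forms on $\hat{N}$ and on $A$ (the latter of rate $O(r^{-2})$ matched across the neck via the topological condition $j_2^A[\alpha]\in\Imm j_2^N$, exactly as in the approximate kernel of Definition \ref{apKerdfn}) are interpolated to closed self-dual $2$-forms on $X$, and in the resulting coordinates $\d G|_{(N,A)}$ is the identity. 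If you want to keep your unique-continuation-style argument you would need to control the derivative of the correction term $\tilde{\alpha}(t)$ with respect to the gluing data, which is additional analytic work not present in your sketch.
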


The organisation of the paper is as follows.  
\begin{itemize}
\item In $\S$2 we provide the basic definitions and notation which shall be used throughout the paper and discuss foundational results for coassociative 4-folds 
involving self-dual 2-forms and tubular neighbourhood theorems.  
\item In $\S$3 we describe our gluing construction, identify the obstructions to the procedure 
and the necessary matching condition for the construction to succeed.  We also set up the analytic framework for our problem and discuss the relationship between our matching conditions and the deformation theory of $A$ and $N$, which allows us to prove Proposition \ref{mainthm3}.
\item In $\S$4, we realise 
our smoothing of the singular coassociative 4-fold as the fixed point of a map between Banach spaces, which we prove is a contraction by deriving appropriate analytic estimates on the smoothing using 
estimates on the ``building blocks'' $A$ and $N$.  We deduce Theorem \ref{mainthm1} and Corollary \ref{mainthm2} from this work.
\item In $\S$5 we compare the moduli space of ``matching pairs'' $(N,tA)$ to the moduli space of smoothings and deduce Proposition \ref{mainthm4}.  
We conclude by applying our theory in examples.
\end{itemize}


\section{Foundations}

In this section we describe the basic foundational material we need to tackle our desingularization problem.

\subsection{Basic definitions}

There exists a 3-form $\varphi_0$ on $\R^7$ with constant coefficients 
such that $\Stab(\varphi_0)\subseteq\GL(7,\R)$ is isomorphic to $\GG_2$.  
  In fact $\Stab(\varphi_0)\subseteq \SO(7)$ so $\GG_2$ preserves the Euclidean metric $g_0$ and orientation on $\R^7$.
  
\begin{dfn}
We call a 3-form $\varphi$ on an oriented 7-manifold $M$ a \emph{$G_2$ structure} if, for all $x\in M$, 
$\varphi|_x=\iota_x^*(\varphi_0)$ for some orientation preserving isomorphism $\iota_x:T_xM\rightarrow\R^7$.  
A $\GG_2$ structure $\varphi$ defines a metric $g_{\varphi}$ on $M$.       
\end{dfn}

We denote a 7-manifold $M$ endowed with a $\GG_2$ structure $\varphi$ by $(M,\varphi)$.  
An oriented 7-manifold will admit a $\GG_2$ structure if and only if it is spin.  We now define special classes of $\GG_2$ structures which will be especially 
relevant.

\begin{dfn} 
We say that $(M,\varphi)$ is an \emph{almost $G_2$ manifold} if $\d\varphi=0$.  We call $(M,\varphi)$ a \emph{$G_2$ manifold} if
 $\d\varphi=\d^*\varphi=0$ with respect to $g_{\varphi}$, which is equivalent to saying that the holonomy  $\Hol(g_{\varphi})\subseteq\GG_2$. 
\end{dfn}

In 7-manifolds with a $\GG_2$ structure we have a distinguished class of submanifolds which shall form the basis for our study.

\begin{dfn}\label{coassdfn}
A \emph{coassociative 4-fold} $X$ in $(M,\varphi)$ is a 4-dimensional submanifold of $M$ such that 
$\varphi|_X\equiv 0$, oriented so that $*\varphi|_X>0$.  Equivalently, $X$ is coassociative if and only if $*\varphi|_X=\vol_X$.
\end{dfn}

\noindent When $(M,\varphi)$ is a $\GG_2$ manifold, coassociative 4-folds are volume-minimizing in their homology class.  Although coassociative 4-folds
lose this property in general almost $\GG_2$ manifolds, their geometry otherwise has essentially the same features as in the $\GG_2$ manifold case.  
Coassociative geometry is discussed in detail in \cite{JoyceRiem}. 

Let $B(0;r)$ denote the Euclidean ball of radius $r$ about $0$ in $\R^7$.  For a cone $C$ in $\R^7$ (i.e.~a dilation-invariant subset) such that $C\setminus\{0\}$ is a smooth submanifold we let $\Sigma=C\cap\mathcal{S}^6$ with the induced metric $g_{\Sigma}$, 
let $\iota:C\cong\R^+\times\Sigma\rightarrow\R^7$ be the inclusion map and let $\nabla_C$ be the Levi-Civita connection of the cone metric $g_C=\d r^2+r^2g_{\Sigma}$ on $C$.  

We may now define the two types of submanifold which shall appear in our desingularization problem: namely \emph{conically singular} and \emph{asymptotically conical}.  In 
each case we have a noncompact submanifold of $(M,\varphi)$ which, outside a compact set, is diffeomorphic to a cone  (or finite collection of cones) 
and converges to the cone with some prescribed rate.  The two classes of submanifold will be dual in the sense that one converges towards the cone near its vertex (so has a singular point) whereas 
the other converges to the cone near infinity.

\begin{dfn}\label{csdfn}
Let $N$ be a (singular) submanifold of $(M,\varphi)$ and let $z\in N$.  Choose local coordinates $\chi:B(0;\epsilon_M)\rightarrow V\ni z$, for some $\epsilon_M\in(0,1)$ 
and open $V\subseteq M$, such that $\chi(0)=z$ and $(\d\chi|_0)^*(\varphi|_z,g_\varphi|_z)=(\varphi_0,g_0)$.  (These are natural coordinates for a neighbourhood of $z$ in $(M,\varphi)$.)

We say that $N$ has a \emph{conical singularity} at $z$ if there exist a cone $C\subseteq\R^7$ with 
link $\Sigma\subseteq\mathcal{S}^6$,  constants $\epsilon\in(0,\epsilon_M)$ and $\mu\in(1,2)$, open $U\subseteq V\cap N$ containing $z$ and a smooth map
$$\Phi_N:(0,\epsilon)\times\Sigma\rightarrow B(0;\epsilon_M)\quad\text{such that}\quad\Psi_{N}=\chi\circ\Phi_N:(0,\epsilon)\times\Sigma\rightarrow U\setminus\{z\}$$ is a diffeomorphism and 
\begin{equation}\label{Nasymeq}
\big|\nabla_C^j\big(\Phi_N(r,\sigma)-\iota(r,\sigma)\big)\big|
=O\big(r^{\mu-j}\big)\qquad \text{for $j\in\N$ as $r\rightarrow 0$ on $C$.}
\end{equation}
We call $C$ the \emph{cone} and $\mu$ the \emph{rate} at the singularity.  

We say that $N$ is a \emph{conically singular} (CS) submanifold 
if $N$ is compact and connected and smooth except for finitely many conical singularities.  We call $N$ a \emph{CS coassociative 4-fold} if $N$ is a 
CS submanifold whose nonsingular part is a coassociative 4-fold.
\end{dfn}

\begin{remarks}\begin{itemize}\item[]
\item[(a)]By \cite[Proposition 3.6]{LotayCS}, if $N$ is a CS coassociative 4-fold then the cones at the singularities are coassociative in $\R^7$.  
\item[(b)] The stipulation that $\mu<2$ allows the definition of conical singularity to be essentially independent of 
the choice of local coordinates $\chi$, as explained in \cite[$\S$3.2]{LotayCS}.  
\item[(c)] Notice that if $N$ is CS with rate $\mu_0$ it is also CS with any rate $\mu\in(1,\mu_0]$.  We are thus
free to reduce the rate $\mu$, so we always choose $\mu$ close to $1$.
\end{itemize}
\end{remarks}
 
\begin{dfn}\label{acdfn}
A (smooth) submanifold $A$ of $\R^7$ is an \emph{asymptotically conical} (AC) submanifold 
if there exist a cone $C\subseteq\R^7$ with link $\Sigma\subseteq\mathcal{S}^6$, constants $R>0$ and $\lambda<1$, 
compact $K_A\subseteq A$ and a diffeomorphism 
$\Phi_A:(R,\infty)\times\Sigma\rightarrow A\setminus K_A$ satisfying 
\begin{equation}\label{Aasymeq}
\big|\nabla^j_C\big(\Phi_A(r,\sigma)-\iota(r,\sigma)\big)\big|=O\big(r^{\lambda-j}\big)
\qquad\text{for $j\in\N$ as $r\rightarrow\infty$ on $C$.}
\end{equation}
We say that $A$ is AC with \emph{rate} $\lambda$ to $C$ to emphasise the choice of $C$ and $\lambda$.
\end{dfn}

\begin{remarks}
\begin{itemize}\item[]
\item[(a)] By \cite[Proposition 2.8]{LotayAC}, if $A\subseteq\R^7$ is coassociative and AC to $C$ then $C$ is coassociative.
\item[(b)] Observe that $A$ only genuinely converges to $C$ at infinity if $\lambda<0$, so allowing for $\lambda\in[0,1)$ permits weak decay.
\item[(c)] Note that if $A$ is AC with rate $\lambda_0$ it is also AC with any higher rate $\lambda\in[\lambda_0,1)$, 
so we are at liberty to increase the rate $\lambda$.
\end{itemize}
\end{remarks}

As we see, AC submanifolds are smoothings of cones and thus provide obvious models for desingularizing CS submanifolds via gluing.  However, the challenge is to 
 desingularize CS \emph{coassociative} 4-folds so that the coassociative condition is preserved, so one would naturally require AC coassociative 4-folds
 in the gluing.   
Our problem is to study when this approach may be successfully applied and
 understand the obstructions to the coassociative gluing process.

To fix notation we describe the ingredients we wish to feed into our problem.

\begin{itemize}
\item Let $C$ be a coassociative cone in $(\R^7,\varphi_0)$ with link $\Sigma=C\cap\mathcal{S}^6$. 
\item Let $N$ be a CS coassociative 4-fold in an almost $\GG_2$ manifold $(M,\varphi)$ with a single conical 
singularity at $z$ with rate $\mu$ and cone $C$.

Choose local coordinates $\chi:B(0;\epsilon_M)\rightarrow M$ about $z$ as in Definition \ref{csdfn}. 
 By Definition \ref{csdfn}, there exist $\epsilon\in(0,\epsilon_M)$, compact $K_N\subseteq \hat{N}=N\setminus\{z\}$ 
and a smooth map 
$$\Phi_N:(0,\epsilon)\times\Sigma\rightarrow B(0;\epsilon_M)\quad\text{such that}\quad \Psi_{N}=\chi\circ\Phi_N:(0,\epsilon)\times\Sigma\rightarrow \hat{N}\setminus K_N$$ is a diffeomorphism satisfying \eq{Nasymeq}.
We can choose $\Phi_N$ such that $$\Phi_N(r,\sigma)-\iota(r,\sigma)\in (T_{r\sigma}C)^{\perp}.$$
\item 
Let $A\subseteq\R^7$ be a coassociative 4-fold which is AC with rate $\lambda<1$ to $C$.  

By Definition \ref{acdfn}, there exist $R>0$, compact $K_A\subseteq A$, and a diffeomorphism
$\Phi_A:(R,\infty)\times\Sigma\rightarrow A\setminus K_A$ satisfying \eq{Aasymeq}.  We can also choose $\Phi_A$ so that 
$$\Phi_A(r,\sigma)-\iota(r,\sigma)\in (T_{r\sigma}C)^{\perp}.$$  Whenever $t$ is sufficiently small that $t^{-1}\epsilon>R$, we set $$\hat{A}(t)=K_A\cup \Phi_A\big((R,t^{-1}\epsilon)\times\Sigma\big).$$
 The subsets $t\hat{A}(t)\subseteq tA$ will be glued to $\hat{N}$ to resolve the singularity $z$.
\item 
Let $\tau\in(0,1)$ be small enough that $\tau R<\epsilon$ and $\tau \hat{A}(\tau)\subseteq B(0;\epsilon_M)$.  Throughout we let 
$t\in(0,\tau)$ be arbitrary and will make $\tau$ smaller a finite number of times, continuing to refer to this new smaller 
constant as $\tau$.  Our constraints on $\tau$ ensure that we can use the local coordinates $\chi$ to 
view the gluing of $t\hat{A}(t)$ to $\hat{N}$, outside the compact set $K_N$, as 
occurring in $B(0;\epsilon_M)\subseteq\R^7$.
\end{itemize}

\noindent We shall occasionally refer to $\hat{N}\setminus K_N$ and $A\setminus K_A$ as the end (or ends since they could be 
disconnected) of $\hat{N}$ and $A$.

\begin{remarks}
\begin{itemize}\item[]
\item[(a)]   We assume that $N$ is connected but not that $\hat{N}$ is connected.  In the special Lagrangian desingularization problem (as in 
\cite{JoyceCS3, JoyceCS4}) there is a marked difference when the corresponding $\hat{N}$ is connected or not.  This is because a
 topological obstruction in the special Lagrangian case automatically vanishes if $\hat{N}$ is connected.  For coassociative 4-folds, assuming that 
 $\hat{N}$ is connected does not force any such vanishing of topological obstructions, so such a difference when $\hat{N}$ is connected or not
does not occur.  The connectedness issue for $\hat{N}$ is crucial for special Lagrangians because a transverse intersection point is a simple example of a 
conical singularity and occurs naturally, since special Lagrangians are half the dimension of the ambient space, so removing the intersection point can 
clearly result in a disconnected $\hat{N}$.  However, coassociative 4-folds do not intersect transversely, but rather in curves generically, so the connectedness issue for $\hat{N}$ is not especially relevant in this geometry.
\item[(b)]  We assume that $N$ has a single singular point only 
for convenience and to avoid proliferation of notation since the same methods detailed in this article will be applicable to the case of 
multiple singular points.  
\end{itemize}
\end{remarks}

\subsection{Self-dual 2-forms and tubular neighbourhoods}

In geometric gluing problems it is often crucial to know the relationship between deformations of the building blocks and those of the glued object.
We shall therefore need to understand deformations of coassociative 4-folds, for which the key result
 is the following \cite[c.f.~Proposition 4.2]{McLean}.

\begin{prop}\label{jmathprop}
Let $X$ be a coassociative 4-fold in $(M,\varphi)$. There is an
isometric isomorphism $\jmath_X$ between the normal bundle $\nu(X)$ of $X$ in $M$ and
$\Lambda^2_+T^*X$ given by $v\mapsto(v\lrcorner\varphi)|_{X}$.
\end{prop}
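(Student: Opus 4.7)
The statement is fibrewise in nature, so my plan is to reduce it to a purely linear algebraic verification at a single point and then propagate by a transitivity argument. First I would observe that the map $v \mapsto (v \lrcorner \varphi)|_X$ is $C^\infty(X)$-linear, so it induces a well-defined bundle map; it therefore suffices to check at each $x \in X$ that $\jmath_X|_x : \nu_x(X) \to \Lambda^2_+ T^*_x X$ is an isometric isomorphism between 3-dimensional real vector spaces. Using the identification of $(T_x M, \varphi|_x, g_\varphi|_x)$ with $(\R^7,\varphi_0,g_0)$, this reduces to a statement about coassociative 4-planes in $(\R^7,\varphi_0)$.

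Next I would exploit the fact that $\GG_2 \subseteq \SO(7)$ acts transitively on the Grassmannian of coassociative 4-planes in $\R^7$. Since elements of $\GG_2$ preserve $\varphi_0$, $g_0$, the orthogonal splitting $\R^7 = T_xX \oplus \nu_x(X)$, and the induced Hodge star (hence the self-dual/anti-self-dual decomposition), the assertion holds at every coassociative plane if and only if it holds at one convenient model. I would therefore fix a standard coassociative 4-plane, the obvious candidate being $\Pi = \{x_1=x_2=x_3=0\} \subseteq \R^7$ with the orientation induced from $dx^{4567}$, for a standard choice of $\varphi_0$ such as $\varphi_0 = dx^{123} + dx^{145} + dx^{167} + dx^{246} - dx^{257} - dx^{347} - dx^{356}$; one checks that $\varphi_0|_\Pi = 0$ and $*\varphi_0|_\Pi = \vol_\Pi$, confirming coassociativity.

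The concrete computation I would perform is then to contract $\varphi_0$ with the orthonormal normal frame $\partial_{x_1},\partial_{x_2},\partial_{x_3}$, restrict to $\Pi$, and read off
\[
\partial_{x_1}\lrcorner\varphi_0|_\Pi = dx^{45}+dx^{67},\quad \partial_{x_2}\lrcorner\varphi_0|_\Pi = dx^{46}-dx^{57},\quad \partial_{x_3}\lrcorner\varphi_0|_\Pi = -dx^{47}-dx^{56}.
\]
Checking against the standard basis of $\Lambda^2_+\Pi^*$ shows that each image is self-dual and that the three images are linearly independent; by dimension, $\jmath_X|_x$ is an isomorphism. Finally, computing the inner products of the three images (normalised consistently with the convention of \cite{JoyceRiem}) yields the isometry assertion.

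The only genuine subtlety is bookkeeping: verifying that the orientation convention used to identify self-dual 2-forms on $X$ (forced by the requirement $*\varphi|_X > 0$ in Definition \ref{coassdfn}) matches the orientation under which the explicit contractions above are self-dual, and ensuring that the metric normalisation on $\Lambda^2 T^*X$ makes $\jmath_X$ an honest isometry rather than merely a conformal isomorphism. Once the conventions are pinned down consistently, every other step is mechanical, and $\GG_2$-transitivity upgrades the pointwise statement to the bundle isomorphism claimed.
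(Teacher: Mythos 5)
Your argument is correct and is essentially the standard one: the paper itself supplies no proof of this proposition, deferring to McLean's Proposition 4.2, whose argument is precisely the pointwise reduction you describe (using $\GG_2$-transitivity on coassociative 4-planes) followed by the explicit contraction computation on the model plane $\{x_1=x_2=x_3=0\}$. The normalisation subtlety you flag is genuine but purely conventional --- with the unnormalised metric on $\Lambda^2T^*X$ the map is $\sqrt{2}$ times an isometry --- and your contractions and the self-duality/independence checks are all correct.
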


\begin{note}
For any coassociative 4-fold $X$ we will consistently use the notation $\jmath_X$ to indicate the isomorphism in Proposition \ref{jmathprop}.
\end{note}

Using this identification, we can view nearby submanifolds to $X$ as graphs of small self-dual 2-forms; that
 is, give open neighbourhoods of the zero section in $\Lambda^2_+T^*X$ and of $X$ in $M$ and a diffeomorphism between them which 
acts as the identity on $X$ (identified with the zero section as usual).  However, we must perform this construction carefully so as to
 ensure compatibility with $\jmath_X$ and to take into account the asymptotic behaviour of $A$ and $N$.  We first make the
 compatibility property precise.
  
\begin{dfn}  Let $X$ be a coassociative 4-fold in $(M,\varphi)$.  
Suppose we have a smooth map $\Upsilon_X$ from an open neighbourhood of the zero section in $\Lambda^2_+T^*X$ to an open tubular
 neighbourhood of $X$ in $M$, acting as the identity $\id_X$ on $X$.  
We may then view $\d\Upsilon_X|_X$ as a map from $TX\oplus\Lambda^2_+T^*X$ to $TX\oplus\nu(X)$.  We say that $\Upsilon_X$ is
 \emph{compatible with $\jmath_X$}  if $$\d\Upsilon_X|_X=\left(\begin{array}{cc} I & \mathcal{A} \\ 0 & \jmath_X^{-1} \end{array}\right),$$
where $I:TX\rightarrow TX$ is the identity and $\mathcal{A}:\Lambda^2_+T^*X\rightarrow TX$ is arbitrary.
\end{dfn}

We now construct our tubular neighbourhoods using self-dual 2-forms as in the author's earlier papers \cite{LotayCS,Lotaydesing,LotayAC}, however our presentation is different and more in the style of \cite[$\S$3-4]{JoyceCS4}
 as it is more convenient.  

Observe that for $t>0$ we have a dilation map $\delta_t:C\rightarrow C$ given by $\delta_t(r,\sigma)=(tr,\sigma)$, which is a diffeomorphism, and its inverse is 
$\delta_{t^{-1}}$.   
We thus have an isomorphism $\delta_{t^{-1}}^*:\Lambda^2_+T^*_{(r,\sigma)}C\rightarrow \Lambda^2_+T^*_{(tr,\sigma)}C$.  Moreover, if 
$\alpha\in C^{\infty}(\Lambda^2_+T^*C)$ then
$|\alpha|_{t^2g_C}=t^{-2}|\alpha|_{g_C}$  and so 
$|t^3\alpha|_{t^2g_C}=t|\alpha|_{g_C}$.  We deduce that $\delta_{t^{-1}}^*$ scales the lengths of self-dual 2-forms by a factor of $t^{-2}$ and  the natural dilation action on $\Lambda^2_+T^*C$ is given by:
\begin{equation}\label{dilationeq}
\big(r,\sigma,\alpha 
\big)\mapsto \big(tr,\sigma,t^3\delta_{t^{-1}}^*\alpha \big).
\end{equation}

\begin{prop}\label{Cnbdprop} 
\begin{itemize}\item[]
\item[\emph{(a)}] There exist dilation-invariant open neighbourhoods 
$U_C\subseteq\Lambda^2_+T^*C$ and $T_C\subseteq \R^7$ of $C$, with $U_C$ given by 
$$U_C=\{\big(r,\sigma,\alpha(r,\sigma)\big)\,:\,|\alpha|<2\zeta r\}$$
for some $\zeta>0$, and a
 dilation-equivariant diffeomorphism $\Upsilon_C:U_C\rightarrow T_C$ such that $\Upsilon_C|_C=\id_C$ and is compatible with $\jmath_C$. 
\item[\emph{(b)}] Make $\epsilon$ smaller and $R$ larger if necessary so that 
$$|\Phi_N(r,\sigma)-r\sigma|<\zeta r\text{ for all }r<\epsilon\quad\!\!\text{and}\quad\!\! |\Phi_A(r,\sigma)-r\sigma|<\zeta r\text{ for all }r>R.$$ There exist self-dual 2-forms $\alpha_N$ on $(0,\epsilon)\times\Sigma$ 
and $\alpha_A$ on $(R,\infty)\times\Sigma$ 
such that 
$$
\Upsilon_C\big(r,\sigma,\alpha_N(r,\sigma)\big)=\Phi_N(r,\sigma)-r\sigma\text{ and } \Upsilon_C\big(r,\sigma,\alpha_A(r,\sigma)\big)=\Phi_A(r,\sigma)-r\sigma.
$$
Moreover, for all $j\in\N$, 
$$|\nabla_C^j\alpha_N|=O(r^{\mu-j})\;\,\text{as }r\rightarrow 0\quad\text{and}\quad |\nabla_C^j\alpha_A|=O(r^{\lambda-j})\;\,
\text{as }r\rightarrow\infty.$$
\end{itemize}
\end{prop}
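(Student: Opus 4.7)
The approach is to build $\Upsilon_C$ as an affine normal perturbation in $\R^7$, exploiting the constant-coefficient nature of $\varphi_0$ and the flatness of the ambient space. I would simply set
\[
\Upsilon_C(r,\sigma,\alpha) := r\sigma + \jmath_C^{-1}(\alpha),
\]
where $\jmath_C^{-1}(\alpha) \in \nu(C)_{(r,\sigma)} \subseteq \R^7$ is obtained from Proposition \ref{jmathprop}. This is smooth, reduces to $\id_C$ when $\alpha=0$, and its differential at $(r,\sigma,0)$ sends $(X,\beta)\in T_{(r,\sigma)}C\oplus \Lambda^2_+T^*_{(r,\sigma)}C$ to $(X,\jmath_C^{-1}(\beta))\in T_{r\sigma}C\oplus\nu(C)_{r\sigma}$, so it is compatible with $\jmath_C$ (with $\mathcal{A}=0$). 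Dilation equivariance follows from $\delta_t^*\varphi_0=t^3\varphi_0$: viewing a normal vector $v\in\nu(C)_{(r,\sigma)}$ as a constant vector in $\R^7$, the 2-form $\jmath_C(v)=(v\lrcorner\varphi_0)|_C$ transforms under the cone dilation precisely as prescribed by \eq{dilationeq}.

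For the diffeomorphism property, I would first restrict to the link. The map $(\sigma,\alpha)\mapsto\Upsilon_C(1,\sigma,\alpha)$ has a fibrewise isomorphism as its differential along the zero section, so by the inverse function theorem combined with compactness of $\Sigma$ there exists $\zeta>0$ for which this restriction is a diffeomorphism of $\{(1,\sigma,\alpha):|\alpha|<2\zeta\}$ onto an open tubular neighbourhood of $\Sigma\subset\R^7$. I would then extend by dilation equivariance, defining
\[
U_C:=\{(r,\sigma,\alpha):r>0,\ \sigma\in\Sigma,\ |\alpha|<2\zeta\}, \qquad T_C:=\Upsilon_C(U_C),
\]
with $|\alpha|$ interpreted as the dilation-invariant rescaling of the pointwise $g_C$-norm. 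Equivariance then propagates the diffeomorphism property from $\{r=1\}$ to all $r>0$, giving (a).

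For (b), the normal-gauge choice of $\Phi_N$ and $\Phi_A$ already recorded in the set-up puts $\Phi_N(r,\sigma)-r\sigma$ and $\Phi_A(r,\sigma)-r\sigma$ into $\nu(C)_{r\sigma}$. I therefore define
\[
\alpha_N(r,\sigma):=\jmath_C\big(\Phi_N(r,\sigma)-r\sigma\big), \qquad \alpha_A(r,\sigma):=\jmath_C\big(\Phi_A(r,\sigma)-r\sigma\big).
\]
The claimed identities involving $\Upsilon_C$ then follow at once from the formula $\Upsilon_C(r,\sigma,\alpha)-r\sigma=\jmath_C^{-1}(\alpha)$, and one has $|\alpha_N|,|\alpha_A|<2\zeta$ on the relevant ranges after, if necessary, shrinking $\epsilon$ and enlarging $R$. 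The decay estimates $|\nabla_C^j\alpha_N|=O(r^{\mu-j})$ and $|\nabla_C^j\alpha_A|=O(r^{\lambda-j})$ transfer directly from \eq{Nasymeq} and \eq{Aasymeq}, because $\jmath_C$ is an isometric bundle isomorphism whose covariant derivatives with respect to $\nabla_C$ are controlled by the second fundamental form of $C\subset\R^7$, which is homogeneous of the correct degree on the cone.

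The main technical obstacle is reconciling the clean affine formula for $\Upsilon_C$ with the precise dilation convention \eq{dilationeq}: one must check carefully that the natural $\R^+$-lift to $\Lambda^2_+T^*C$ arising from the cone geometry matches the section rescaling $(r,\sigma,\alpha(r,\sigma))\mapsto(tr,\sigma,t^3\alpha(tr,\sigma))$, and that the norm under which $\{|\alpha|<2\zeta\}$ is dilation-invariant is the correct pointwise norm. Once this book-keeping is in place, the remaining verifications reduce to a linear-algebra check on the compact link $\Sigma$ and routine computations exploiting the cone structure.
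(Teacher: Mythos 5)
Your approach---an affine normal perturbation $\Upsilon_C(r,\sigma,\alpha)=r\sigma+\jmath_C^{-1}(\alpha)$, with the isometric isomorphism $\jmath_C$ then handling part (b) and the decay estimates---is essentially what the paper does, just made explicit: the paper's terse invocation of ``the Tubular Neighbourhood Theorem applied to $\Sigma\subset\mathcal{S}^6$'' is doing exactly this work, and your verification of equivariance against \eq{dilationeq} reproduces the paper's remark immediately preceding the proposition. Your reading of (b), including silently correcting what looks like a typo (the identity should be $\Upsilon_C(r,\sigma,\alpha_N(r,\sigma))=\Phi_N(r,\sigma)$, not $\Phi_N(r,\sigma)-r\sigma$, since $\Upsilon_C|_C=\id_C$), is correct.

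There is, however, a dimensional slip in the diffeomorphism step. The restriction $(\sigma,\alpha)\mapsto\Upsilon_C(1,\sigma,\alpha)$ is a map from a $6$-dimensional domain ($\dim\Sigma+\operatorname{rank}\Lambda^2_+T^*C=3+3$) into $\R^7$, so its image cannot be ``an open tubular neighbourhood of $\Sigma\subset\R^7$'' as you assert: it is a hypersurface, not an open subset, so the inverse function theorem as you invoke it does not apply. The gap is small and standard to close: for $\zeta$ small the image is a hypersurface $H$ through $\Sigma$ which is transverse to the radial vector field, so $(0,\infty)\times H\to\R^7\setminus\{0\}$, $(r,h)\mapsto rh$, is a diffeomorphism onto an open set, and dilation equivariance then identifies this with $\Upsilon_C$ on all of $U_C$. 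Alternatively, work directly on the full $7$-dimensional domain $U_C$: $d\Upsilon_C$ is an isomorphism along the zero section, hence (by compactness of $\Sigma$ and equivariance) uniformly an isomorphism on $\{|\alpha|<2\zeta\}$ for $\zeta$ small, and global injectivity follows from the same compactness-plus-scaling consideration. With either repair the argument closes and matches the paper's.
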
  

\begin{proof}  Detailed arguments are given in \cite{LotayCS,LotayAC} so we just sketch the idea here.

Applying the Tubular Neighbourhood Theorem to $\Sigma\subseteq\mathcal{S}^6$, we can easily construct diffeomorphic
 dilation-invariant open neighbourhoods of $C$ in $\nu(C)$ and $\R^7$.  Using $\jmath_C$ gives (a).   
 
We can certainly change $\epsilon$ and $R$ as claimed given the asymptotic behaviour of $\Phi_N$ and $\Phi_A$. Part (b) then follows from (a), the definition of $N$ and $A$ as CS and AC submanifolds and the fact that $\jmath_C$ is an isometric isomorphism.
\end{proof}

Proposition \ref{Cnbdprop} says we may effectively view the ends of $A$ and $N$ as graphs of the self-dual 2-forms $\alpha_A$ and $\alpha_N$ on the cone.
We can then extend this result  as in our earlier work to give neighbourhoods of $A$ and $N$, which are 
adapted so that we may realize graphs of small self-dual 2-forms on 
the ends as graphs of small self-dual 2-forms on the cone.  

\begin{prop}\label{NAnbdprop} Recall the notation of Proposition \ref{Cnbdprop}.
\begin{itemize}
\item[\emph{(a)}] There exist open neighbourhoods $U_N\subseteq \Lambda^2_+T^*\hat{N}$ and $T_N\subseteq M$ of $\hat{N}$ and a
 diffeomorphism $\Upsilon_N:U_N\rightarrow T_N$ such that $\Upsilon_N|_N=\id_N$ and is compatible with
 $\jmath_N$.  Further, 
$$\Psi_N^*(U_N)=\{\big(r,\sigma,\alpha(r,\sigma)\big)\,:\,r<\epsilon,\,|\alpha|<\zeta r\}$$ and
$$\Upsilon_N\big(\Psi_N(r,\sigma),\alpha\big(\Psi_N(r,\sigma)\big)\big)=
\chi\circ\Upsilon_C\big(r,\sigma,\alpha_N(r,\sigma)+\Psi_N^*\alpha(r,\sigma)\big).$$
\item[\emph{(b)}]
There exist open neighbourhoods $U_A\subseteq \Lambda^2_+T^*A$ and $T_A\subseteq\R^7$ of $A$ and a diffeomorphism 
$\Upsilon_A:U_A\rightarrow T_A$ such that $\Upsilon_A|_A=\id_A$ and is compatible with $\jmath_A$.  Further, 
$$\Phi_A^*(U_A)=\{\big(r,\sigma,\alpha(r,\sigma)\big)\,:\,r>R,\,|\alpha|<\zeta r\}$$ and
$$\Upsilon_A\big(\Phi_A(r,\sigma),\alpha\big(\Phi_A(r,\sigma)\big)\big)=
\Upsilon_C\big(r,\sigma,\alpha_A(r,\sigma)+\Phi_A^*\alpha(r,\sigma)\big).$$
\end{itemize}
\end{prop}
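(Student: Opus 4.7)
The plan is to construct $\Upsilon_N$ by patching a tubular neighbourhood over a compact piece of $\hat{N}$ to an explicit tubular neighbourhood over the conical end, and to treat $\Upsilon_A$ analogously with $r\to 0$ replaced by $r\to \infty$ (and working directly in $\R^7$, without the chart $\chi$).

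On the conical end I would take the formula displayed in part (a) as the definition of a candidate $\Upsilon_N^{\mathrm{end}}$. Since $U_C=\{|\alpha|<2\zeta\}$ is dilation-invariant and $|\alpha_N(r,\sigma)|=O(r^\mu)$ with $\mu>1$ by Proposition \ref{Cnbdprop}(b), after possibly shrinking $\epsilon$ and $\zeta$ we have $\alpha_N+\Psi_N^*\alpha\in U_C$ whenever $r<\epsilon$ and $|\alpha|<\zeta$. Because $\Upsilon_C$, $\Psi_N$ and $\chi$ are diffeomorphisms onto their images, $\Upsilon_N^{\mathrm{end}}$ is a smooth diffeomorphism onto an open subset of $M$ and reduces to $\Psi_N$ when $\alpha=0$, hence fixes the end of $\hat{N}$. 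Separately, on a relatively compact open set containing $K_N$ and meeting the end, I would apply the ordinary Tubular Neighbourhood Theorem together with $\jmath_N^{-1}$ from Proposition \ref{jmathprop}---for instance via the $g_\varphi$-exponential in the direction $\jmath_N^{-1}(\alpha)$---to produce $\Upsilon_N^{\mathrm{cpct}}$ which is compatible with $\jmath_N$ by construction. Finally, I would patch $\Upsilon_N^{\mathrm{end}}$ and $\Upsilon_N^{\mathrm{cpct}}$ via a smooth radial cutoff in $r$ supported in the overlap annulus, working in $\R^7$ via $\chi$ where interpolation makes sense. Both pieces act as the identity on $\hat{N}$, so the patched map still fixes $\hat{N}$, and shrinking the open neighbourhood of the zero section keeps it a diffeomorphism onto its image.

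The main obstacle is verifying compatibility of $\Upsilon_N^{\mathrm{end}}$ with $\jmath_N$, since the formula pulls back the cone's self-dual structure via $\Psi_N$ rather than using $\hat{N}$'s own normal bundle. The crucial observation is that compatibility only constrains the upper-left (identity) and lower-right ($\jmath_N^{-1}$) blocks of $d\Upsilon_N|_{\hat{N}}$, leaving the upper-right block $\mathcal{A}$ free; thus the cutoff---which only alters $\mathcal{A}$---cannot break it. Using that $\Upsilon_C$ is compatible with $\jmath_C$ on $C$ and that $|\nabla_C^j\alpha_N|=O(r^{\mu-j})$ with $\mu>1$, the derivative in $\alpha$ at $\alpha=0$ of $\chi\circ\Upsilon_C(r,\sigma,\alpha_N+\Psi_N^*\alpha)$ matches $\jmath_N^{-1}$ along $\hat{N}$ up to an error lying in the tangent directions to $\hat{N}$, which is absorbed into $\mathcal{A}$; the upper-left block reduces to the identity because $\Psi_N$ is the restriction of $\chi\circ\Upsilon_C(\cdot,\alpha_N)$ to the zero section. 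Part (b) follows from an identical argument with $\Phi_A$ in place of $\Psi_N$ and the AC bound $|\nabla_C^j\alpha_A|=O(r^{\lambda-j})$ in place of the CS bound.
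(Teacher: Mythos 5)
The paper does not actually prove this proposition: it is stated without proof, with the construction deferred to \cite{LotayCS,Lotaydesing,LotayAC} and to the presentation of \cite[$\S$3--4]{JoyceCS4}. Your overall architecture --- take the displayed formula as the \emph{definition} of $\Upsilon_N$ over the conical end, build a generic tubular neighbourhood from $\jmath_N^{-1}$ over a compact piece, and patch with a radial cutoff, noting that patching only perturbs the free block $\mathcal{A}$ --- is the standard one and matches those sources in outline. (One presentational point: $K_N$ and $\Psi_N((0,\epsilon)\times\Sigma)$ \emph{partition} $\hat{N}$, so the ``overlap annulus'' does not exist as described; you must shrink $\epsilon$, enlarge $K_N$ accordingly, and interpolate in the discarded collar, which is permissible since the set-up allows $\epsilon$ to be decreased.)

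The genuine gap is your verification that the end piece is compatible with $\jmath_N$. Since $\varphi|_{\hat{N}}=0$, the map $v\mapsto(v\lrcorner\varphi)|_{T_p\hat{N}}$ annihilates $T_p\hat{N}$, so compatibility at $p=\Psi_N(r,\sigma)$ is equivalent to the \emph{exact} identity $\big(D(\alpha)\lrcorner\varphi\big)|_{T_p\hat{N}}=\alpha$, where $D(\alpha)=\d\chi\circ\partial_{\mathrm{fib}}\Upsilon_C|_{(r,\sigma,\alpha_N)}(\Psi_N^*\alpha)$ is the fibre derivative of your end map. You are right that purely tangential errors are absorbed into $\mathcal{A}$, but you then need the remaining error to have no normal component, and the justification you offer (compatibility of $\Upsilon_C$ along $C$ together with $|\nabla_C^j\alpha_N|=O(r^{\mu-j})$) shows only that the error is \emph{small}, not that it is tangential. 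Concretely there are three sources of error: $\partial_{\mathrm{fib}}\Upsilon_C$ is evaluated at $\alpha_N\neq0$ rather than on the zero section (removable only if $\Upsilon_C$ is chosen fibrewise affine); the tangent spaces $T_p\hat{N}$ and $T_{r\sigma}C$ differ by $O(r^{\mu-1})$; and $\chi^*\varphi-\varphi_0=O(r)$. Together these give $\big(D(\alpha)\lrcorner\varphi\big)|_{T_p\hat{N}}=\alpha+E(\alpha)$ with $E(\alpha)=O(r^{\mu-1}+r)|\alpha|$, and $\jmath_N^{-1}(E(\alpha))$ is a \emph{normal} vector, so it cannot be hidden in $\mathcal{A}$. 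Hence the formula as displayed is compatible with $\jmath_N$ only up to errors decaying as $r\to0$; obtaining the literal statement requires an additional fibrewise correction of the identification over the end (and a check that this is consistent with the displayed formula), or else a weakening of the compatibility claim with the decaying linear error tracked through the later estimates. This is precisely where the cited constructions do their real work, and it is missing from your argument.
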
 

\begin{remark} Observe the important difference between (a) and (b): in (a) we need to use the particular identification $\chi$ between an open ball
 in $\R^7$ and an open neighbourhood of $z$ in $M$.
 \end{remark}

Having identified self-dual 2-forms $\alpha$ with nearby submanifolds $X_{\alpha}$ to a coassociative 4-fold $X$ we 
may ask: what is the condition on $\alpha$ which makes $X_{\alpha}$ coassociative?  By Definition \ref{coassdfn} this is given by $\varphi|_{X_{\alpha}}=0$, which
 leads to a fully nonlinear equation on $\alpha$.  By the calculation in \cite[p.~731]{McLean} we see that the 
linearisation of this equation is $\d\alpha=0$ since $\d\varphi=0$.  (Here is where we use the 
condition that $(M,\varphi)$ is an almost $\GG_2$ manifold, since otherwise the linearisation would have further terms.)  We deduce the following well-known fact.

\begin{prop}\label{infdefprop} Let $X$ be a coassociative 4-fold in an almost $\GG_2$ manifold.  Infinitesimal 
 coassociative deformations of $X$ are given by closed self-dual 2-forms on $X$.   
\end{prop}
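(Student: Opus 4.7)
The plan is to linearize the condition $\varphi|_{X_\alpha}=0$ at $\alpha=0$ using the tubular neighbourhood $\Upsilon_X$ provided by (the proof of) Proposition \ref{NAnbdprop}, which identifies sufficiently small self-dual $2$-forms $\alpha\in C^\infty(\Lambda^2_+T^*X)$ with nearby submanifolds $X_\alpha\subseteq M$. Under this identification the coassociative condition becomes a smooth nonlinear equation $F(\alpha)=0$, where $F(\alpha)\in C^\infty(\Lambda^3 T^*X)$ is the pullback of $\varphi$ to $X$ along the graph embedding $x\mapsto\Upsilon_X(x,\alpha(x))$. The claim is that the differential $\d F|_0$ coincides with exterior differentiation; since closed self-dual $2$-forms span the kernel of $\d F|_0$, the statement about infinitesimal deformations will follow.

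The key input is the compatibility of $\Upsilon_X$ with $\jmath_X$: this forces the variation vector field of the family $\{X_{t\alpha}\}_{t\in(-\epsilon,\epsilon)}$ at $t=0$ to be precisely the normal vector field $v=\jmath_X^{-1}(\alpha)$, because compatibility ensures that the vertical component of $\d\Upsilon_X|_X$ applied to $\alpha$ is $\jmath_X^{-1}(\alpha)$ (any tangential component $\mathcal{A}(\alpha)$ is irrelevant, as it corresponds to reparametrization and does not affect the pullback of $\varphi$ to $X$). Hence by the standard first variation formula,
\[
\d F|_0(\alpha)=\frac{d}{dt}\Big|_{t=0}F(t\alpha)=\mathcal{L}_v\varphi\big|_X.
\]

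Now I would apply Cartan's magic formula
\[
\mathcal{L}_v\varphi=v\lrcorner\,\d\varphi+\d(v\lrcorner\,\varphi).
\]
Because $(M,\varphi)$ is an almost $\GG_2$ manifold we have $\d\varphi=0$, so $\mathcal{L}_v\varphi=\d(v\lrcorner\,\varphi)$. Restricting to $X$ and using that exterior differentiation commutes with pullback along the inclusion $X\hookrightarrow M$, together with the defining identity $\jmath_X(v)=(v\lrcorner\,\varphi)|_X=\alpha$, gives
\[
\d F|_0(\alpha)=\d\bigl((v\lrcorner\,\varphi)|_X\bigr)=\d\alpha.
\]
The infinitesimal coassociative deformations are therefore exactly the closed self-dual $2$-forms on $X$.

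The only delicate point is the bookkeeping in the first step: one must check that the compatibility clause $\d\Upsilon_X|_X=\bigl(\begin{smallmatrix} I & \mathcal{A}\\ 0 & \jmath_X^{-1}\end{smallmatrix}\bigr)$ lets the arbitrary tangential factor $\mathcal{A}$ be safely ignored when computing $\d F|_0$. This is where the almost $\GG_2$ hypothesis $\d\varphi=0$ enters essentially, since any extra contribution $v\lrcorner\,\d\varphi$ would destroy the clean identification of the linearisation with $\d$.
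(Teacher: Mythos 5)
Your argument is correct and is essentially the same as the paper's, which simply cites the calculation in McLean [p.~731] showing that the linearisation of $\alpha\mapsto\varphi|_{X_\alpha}$ is $\d\alpha$ once $\d\varphi=0$; you have just written out that calculation (first variation, Cartan's formula, $\jmath_X(v)=\alpha$) explicitly. The one point worth tightening is the final remark: the tangential term $\mathcal{A}(\alpha)$ is harmless for the separate reason that $\varphi|_X=0$ makes the equation reparametrization-invariant, whereas $\d\varphi=0$ is needed to kill the distinct term $v\lrcorner\,\d\varphi$.
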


Closed self-dual forms are trivially also coclosed.  Hence,
 if $X$ is compact, Hodge theory implies that such forms uniquely represent cohomology classes in $H^2(X)$. 
 In the non-compact setting we do not have such a result, but for AC and CS 4-folds we can say which cohomology classes are uniquely
 represented by $L^2$ closed self-dual 2-forms.
This leads to our next definition.
 
 \begin{dfn}\label{H2+dfn}
For any Riemannian 4-manifold $X$, let 
\begin{align*}
\mathcal{H}^2(X)&=\{\alpha\in L^2(\Lambda^2T^*X):\d\alpha=\d^*\alpha=0\},\\
\mathcal{H}^2_{\pm}(X)&=\{\alpha\in L^2(\Lambda^2_{\pm}T^*X):\d\alpha=0\}.
\end{align*}
Notice that $\mathcal{H}^2(X)=\mathcal{H}^2_+(X)\oplus\mathcal{H}^2_-(X)$ and that by elliptic regularity 
$\mathcal{H}^2(X)$ consists of smooth forms.  

If $X$ is compact then $\dim\mathcal{H}^2(X)=b^2(X)$ and $\dim\mathcal{H}^2_{\pm}(X)=b^2_{\pm}(X)$.  If $X$ is an AC or (the nonsingular part of) a CS 4-fold and we let $$\mathcal{J}(X)=\Imm\big(H^2_{\text{cs}}(X)\rightarrow H^2(X)\big)$$ then, 
by \cite[Examples (0.15) \& (0.16)]{Lockhart},
$$\dim\mathcal{H}^2(X)=\dim\mathcal{J}(X)\quad\text{and}\quad\dim\mathcal{H}^2_{\pm}(X)=\dim\mathcal{J}_{\pm}(X),$$ where
 $\mathcal{J}_{\pm}(X)$ are the maximal positive and negative subspaces of $\mathcal{J}(X)$ with respect to the cup product.  (The subspaces 
$\mathcal{J}_{\pm}(X)$ are well-defined because the cohomology classes in $\mathcal{J}(X)$ are represented by compactly supported
 forms.)  We thus define $b^2_{\pm}(X)=\dim\mathcal{J}_{\pm}(X)$.
\end{dfn}

By \cite[$\S$4]{McLean}, the deformation theory of compact coassociative 4-folds $X$ is unobstructed, so infinitesimal deformations 
always extend to genuine deformations and thus we have the following. 

\begin{thm}\label{cptdefthm} Let $X$ be a compact coassociative 4-fold in an almost $\GG_2$ manifold.
The moduli space of compact coassociative deformations of $X$ is a smooth manifold near $X$ of dimension $b^2_+(X)$.
\end{thm}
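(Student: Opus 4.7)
The plan is to adapt McLean's classical approach to coassociative deformations, using an analytic set-up parallel to that developed for the asymptotically conical and conically singular cases in this paper. The strategy is to realize nearby coassociative submanifolds as zeros of a nonlinear first-order operator on self-dual 2-forms, and apply the implicit function theorem.

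First, using $\jmath_X$ and a compatible tubular neighbourhood $\Upsilon_X:U_X\subseteq\Lambda^2_+T^*X\to T_X\subseteq M$ (the compact analogue of Proposition \ref{NAnbdprop}, with no asymptotic behaviour to manage since $X$ is compact), I would parametrize submanifolds sufficiently close to $X$ by small self-dual 2-forms $\alpha$. Define the nonlinear deformation map
$$
F:U\subseteq C^{k+1,a}(\Lambda^2_+T^*X)\to C^{k,a}(\Lambda^3T^*X),\qquad F(\alpha)=\pi_\alpha^*\big(\varphi|_{X_\alpha}\big),
$$
where $X_\alpha$ is the graph of $\alpha$ under $\Upsilon_X$ and $\pi_\alpha:X\to X_\alpha$ is the induced diffeomorphism. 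By Definition \ref{coassdfn}, $X_\alpha$ is coassociative if and only if $F(\alpha)=0$, and by Proposition \ref{infdefprop} the linearization is $dF_0=\d$.

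The key observation allowing the analysis to close up is that $F(\alpha)$ is always \emph{exact}: it is closed since $\d\varphi=0$ and pullback commutes with $\d$, and its de Rham class $[F(\alpha)]\in H^3(X)$ is preserved along the smooth family $X_{t\alpha}$ by a Lie derivative (Cartan formula) argument, so equals $[F(0)]=0$. Hence $F$ lands in the Banach space of exact $C^{k,a}$ 3-forms. I would next show that $\d:C^{k+1,a}(\Lambda^2_+T^*X)\to\d C^{k+1,a}(\Lambda^2T^*X)$ is a surjective Fredholm operator with kernel $\mathcal{H}^2_+(X)$, of dimension $b^2_+(X)$. The kernel claim follows from the four-dimensional identity $\d^*\alpha=-{*}\d{*}\alpha=-{*}\d\alpha$ for self-dual $\alpha$, forcing $\d\alpha=0$ to imply $\alpha$ is harmonic. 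Surjectivity is the main technical point: using Hodge theory together with the elliptic theory of the anti-self-dual operator $\d_-:\Omega^1\to\Omega^2_-$ (whose cokernel is identified with $\mathcal{H}^2_-(X)$), one shows $\d\Omega^2_-\subseteq\d\Omega^2_+$, and therefore $\d\Omega^2_+=\d\Omega^2$ equals the space of exact 3-forms.

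With these inputs, $F$ is a smooth map between Banach spaces whose derivative $dF_0$ is a bounded surjection onto its target, with finite-dimensional (hence complemented) kernel of dimension $b^2_+(X)$. The implicit function theorem therefore gives that $F^{-1}(0)$ is a smooth submanifold of this dimension near $0$, and standard elliptic regularity promotes weak solutions to smooth coassociative deformations, yielding the moduli space structure. The main obstacle is the surjectivity statement for $\d|_{\Lambda^2_+T^*X}$: unlike the unobstructed kernel computation, it is a genuinely four-dimensional Hodge-theoretic fact exploiting the interplay between the self-dual/anti-self-dual splitting and the Hodge decomposition.
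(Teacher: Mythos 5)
Your proof is correct and reconstructs exactly the argument the paper invokes: the paper offers no proof of its own, citing McLean's \S 4 for unobstructedness, and your write-up is a faithful rendering of McLean's method (tubular neighbourhood via $\jmath_X$, deformation map $F$ with linearization $\d$, exactness of $F(\alpha)$ via closedness of $\varphi$, and the four-dimensional Hodge-theoretic surjectivity $\d\Omega^2_+=\d\Omega^2$ from $\Omega^2_-=\mathcal{H}^2_-\oplus\d_-\Omega^1$ and $\d\d_-\eta=-\d\d_+\eta$), with the only remark being that you correctly observe—as the paper does implicitly via Proposition~\ref{infdefprop}—that only $\d\varphi=0$ is needed, so the result holds in the almost $\GG_2$ setting rather than just the torsion-free one McLean treats.
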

  
\noindent The author extended this result in \cite{LotayCS} and \cite{LotayAC} to the CS and AC settings, where various similarities 
 and differences occur which shall be discussed later.  These results will be crucial in understanding obstructions to the gluing problem.
 
\section{Desingularization: geometry}\label{geometry}

In this section we tackle the more ``geometric'' aspects of our desingularization problem.  The key part is to 
construct 
an appropriate connect sum $\tilde{N}(t)$ of $\hat{N}$ and $tA$ such that $\tilde{N}(t)$ is a smooth compact 
4-fold with $\tilde{N}(t)\rightarrow N$ as $t\rightarrow 0$.  The crucial point will be to ensure that $\tilde{N}(t)$ is sufficiently ``close'' to being 
coassociative; i.e.~that $|\varphi|_{\tilde{N}(t)}|$ is ``small enough'' that one may hope to perturb $\tilde{N}(t)$ to a nearby coassociative 4-fold $N(t)$.
 
Unlike in \cite{Lotaydesing}, where one was able to construct $\tilde{N}(t)$ 
using a rather naive connect sum, 
  here we have to use a more refined technique which requires a detailed understanding 
of the geometric \emph{obstructions} to the coassociative gluing procedure.  We discover that the obstructions which emerge are both topological and analytic 
in nature, and we can give natural interpretations for the obstructions.  

\subsection{Obstructions}\label{obstruct}

Studying the argument in 
\cite{Lotaydesing}, one sees that for our 
problem we simply cannot use the same
 method of constructing $\tilde{N}(t)$ since the analysis will fail.  This is not a flaw with the analytic method, but rather it is
 a \emph{geometric} phenomenon.  Specifically, in \cite{Lotaydesing} geometric assumptions were made precisely to ensure that the desingularization was \emph{unobstructed}. 
In general, there are geometric \emph{obstructions} to resolving the coassociative conical singularity, which we now identify.

We begin by examining the cone $C$.  Consider a self-dual 2-form $\alpha$ on $C$ which is homogeneous of rate $\upsilon$ say.  We may write
\begin{equation*}
\alpha=r^{\upsilon+2}(\alpha_{\Sigma}+r^{-1}\d r\w *_{\Sigma}\alpha_{\Sigma})
\end{equation*}
for a 2-form $\alpha_{\Sigma}$ on the link $\Sigma$ of $C$, noting that $|\alpha_{\Sigma}|_{g_C}=O(r^{-2})$ and $|*_{\Sigma}\alpha_{\Sigma}|_{g_C}=O(r^{-1})$.  (We use the notation $*_\Sigma$ to clarify that we are using the Hodge star on $\Sigma$.)  The condition that $\alpha$ is closed is equivalent to 
\begin{equation}\label{Deq}
\d\!*_{\Sigma}\!\alpha_{\Sigma}=(\upsilon+2)\alpha_{\Sigma}\quad\text{and}\quad\d\alpha_{\Sigma}=0.
\end{equation}
Such closed forms $\alpha$ define infinitesimal coassociative deformations of $C$ by Proposition \ref{infdefprop}.  These forms 
will also naturally relate to deformations of $A$ and $\hat{N}$.  To understand this relationship we first make a convenient definition.

\begin{dfn}\label{Ddfn}
For $\upsilon\in\R$ let $D(\upsilon)\subseteq C^{\infty}(\Lambda^2T^*\Sigma)$ be the space of solutions to \eq{Deq}, so  
$D(\upsilon)$ corresponds to the homogeneous closed self-dual 
2-forms on $C$ of rate $\upsilon$.  We also let 
$\mathcal{D}=\big\{\upsilon\in\R\,:\,D(\upsilon)\neq\{0\}\big\}$ and let $d_{\mathcal{D}}(\upsilon)=\dim D(\upsilon)$. 
\end{dfn}

\begin{remarks}
\begin{itemize}\item[]
\item[(a)]The set $\mathcal{D}$ is countable and discrete, and $d_{\mathcal{D}}(\upsilon)$ is always finite. 
\item[(b)]
For $\upsilon=-2$, \eq{Deq} is equivalent to the statement that $\alpha_{\Sigma}$ is closed and coclosed.  Thus 
$d_{\mathcal{D}}(-2)=b^1(\Sigma)$ and $-2\in\mathcal{D}$ if and only if $b^1(\Sigma)\neq 0$.
\item[(c)] For $\upsilon=1$, \eq{Deq} gives closed self-dual 2-forms on $C$ which are invariant under dilations 
and thus correspond to infinitesimal deformations of $C$ as a coassociative cone, which include $\GG_2$ transformations of $C$.  
Similarly, for $\upsilon=0$, \eq{Deq} defines infinitesimal coassociative deformations 
of $C$ which are homogeneous of order $0$, which include translations of $C$.
\end{itemize}
\end{remarks}

Since we may view $A$ as a manifold with boundary $\Sigma$, we have the following exact sequence:
\begin{equation}\label{Aseq}
\cdots\longrightarrow H^m_{\text{cs}}(A)
\,{\buildrel\iota^A_m\over\longrightarrow}\,
H^m(A)\,{\buildrel j^A_m\over\longrightarrow}\,
H^m(\Sigma)\,{\buildrel
\partial^A_m\over\longrightarrow}\,
H_{\text{cs}}^{m+1}(A)\longrightarrow\cdots.
\end{equation}
The connection between deformations of $C$ and $A$ can now be succinctly expressed through one of the main results in \cite{LotayAC}.

\begin{thm}\label{ACdefthm}  Suppose that the rate $\lambda<0$ and let $\lambda_+\in(-2,0)\setminus\mathcal{D}$ be such that
 $\lambda_+\geq\lambda$.  The moduli space of deformations of $A$ as an AC
 coassociative 4-fold with rate $\lambda_+$ and cone $C$ is a smooth manifold near $A$ of dimension
$$b^2_+(A)+\dim\Imm j_2^A
+\!\!\!\!\!\!
\sum_{\upsilon\in(-2,\lambda_+)}\!\!\!\!\!\!d_{\mathcal{D}}(\upsilon),$$
which is the dimension of 
$$
\{\alpha\in C^{\infty}(\Lambda^2_+T^*A)\,:\,\d\alpha=0,\,|\nabla_C^j\Phi_A^*\alpha|=O(r^{\lambda_+-j})\text{ as }r\rightarrow\infty\text{ for all }j\in\N\}.$$
\end{thm}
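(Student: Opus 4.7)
The plan is to apply a parameterized implicit function theorem in appropriate weighted Banach spaces, modelled on the deformation theory of compact coassociative 4-folds (Theorem \ref{cptdefthm}) but with weighted analysis tailored to the conical asymptotics of $A$. Using Proposition \ref{NAnbdprop}(b), I would identify nearby submanifolds to $A$ with small self-dual 2-forms $\alpha \in C^\infty(\Lambda^2_+ T^*A)$ via $\Upsilon_A$, and then impose the coassociative condition as a nonlinear equation $F(\alpha) = 0$, where $F(\alpha)$ is the pullback of $\varphi_0$ to the graph of $\alpha$. The requirement that the deformation remain AC with rate $\lambda_+$ translates into $\alpha$ lying in a weighted Banach space, for instance $C^{k,a}_{\lambda_+}(\Lambda^2_+ T^*A)$, whose norm measures decay of $\alpha$ and its derivatives at rate $\lambda_+$ on the end $A\setminus K_A$ pulled back via $\Phi_A$.

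By the calculation in \cite[p.~731]{McLean} (which underlies Proposition \ref{infdefprop}, and uses $\d\varphi_0 = 0$ on $\R^7$), the linearization $\d F|_0$ is the exterior derivative $\d: \Omega^2_+ \to \Omega^3$. The strategy is then to work with the elliptic operator $\d + \d^*$ on self-dual 2-forms in weighted spaces $C^{k,a}_{\lambda_+} \to C^{k-1,a}_{\lambda_+-1}$. Lockhart--McOwen theory for asymptotically conical manifolds shows this operator is Fredholm precisely when $\lambda_+$ is not a critical rate; the critical set is exactly $\mathcal{D}$ from Definition \ref{Ddfn}, since homogeneous closed self-dual 2-forms on $C$ are the obstructions to Fredholmness. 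The hypothesis $\lambda_+ \notin \mathcal{D}$ is thus precisely what is needed. Surjectivity of $\d F|_0$ onto the image of $\d$ (the analogue of McLean's unobstructedness) then upgrades the infinitesimal count to a smooth moduli space through the implicit function theorem.

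The heart of the argument is the dimension formula. One decomposes the kernel of $\d$ acting on self-dual 2-forms at weight $\lambda_+$, via the standard index-jumping principle of Lockhart--McOwen, according to the rate of decay. Starting from the $L^2$ harmonic self-dual 2-forms — which by Definition \ref{H2+dfn} have dimension $b^2_+(A) = \dim\mathcal{J}_+(A)$ — one gains, as the weight is raised across each critical value $\upsilon \in (-2, \lambda_+)$, a new subspace of dimension $d_{\mathcal{D}}(\upsilon)$ coming from closed self-dual 2-forms on $A$ whose asymptotic leading term on the end is a homogeneous solution of rate $\upsilon$ on $C$. Finally, since $\lambda_+ > -2$ (the threshold where closed and coclosed forms on $\Sigma$ appear, in accordance with the remark after Definition \ref{Ddfn}), one also picks up a contribution from closed forms whose restriction to $\Sigma$ represents a nontrivial cohomology class in the image $\Imm j_2^A \subseteq H^2(\Sigma)$; this uses the exact sequence \eq{Aseq} to identify topologically which asymptotic classes can be represented by a global closed form on $A$. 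Summing gives the stated dimension.

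The main obstacle is twofold: first, proving surjectivity of the linearization in the weighted setting, which requires a careful Fredholm analysis of $\d + \d^*$ together with a duality argument to identify the cokernel; second, showing rigorously that each admissible asymptotic profile — whether a homogeneous mode of rate $\upsilon \in (-2,\lambda_+) \cap \mathcal{D}$ or a topological class in $\Imm j_2^A$ — is genuinely realized by a closed self-dual 2-form on $A$ decaying (or growing) at the prescribed rate. The latter amounts to solving a Laplace-type equation on $A$ with prescribed asymptotic data, which fits within the framework of \cite{LotayAC} once the Fredholm theory is set up. The rest of the proof is then bookkeeping: combining the index jump contributions with the $L^2$ piece and the topological piece, and invoking unobstructedness to conclude smoothness of the moduli space.
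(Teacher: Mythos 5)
Your outline reproduces exactly the strategy the paper sketches for Theorem \ref{ACdefthm} (which it cites from \cite{LotayAC} rather than proving in full): linearize the coassociative condition to $\d$ on self-dual 2-forms, set up the weighted Fredholm theory of $\d+\d^*$ with critical rates $\mathcal{D}$ following Lockhart--McOwen, and assemble the dimension from the $L^2$ kernel ($b^2_+(A)$), the rate-$(-2)$ topological contribution ($\dim\Imm j_2^A$ via the exact sequence \eq{Aseq}), and the index jumps across $(-2,\lambda_+)\cap\mathcal{D}$, closing with an implicit-function-theorem argument for unobstructedness. This is essentially the same approach as the paper's discussion; no gap.
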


The appearance of the term $b^2_+(A)$ is clear by Definition \ref{H2+dfn}.  
A key part of the proof and dimension count relies on showing that various closed self-dual 2-forms on $C$ lift to $A$, applying the theory 
in \cite{LockhartMcOwen}.  More precisely, given a homogeneous closed self-dual 2-form $\alpha_C$ on $C$ defined by a solution to \eq{Deq} for 
$\upsilon\in(-2,0)$, one needs to show that there exists a closed self-dual 2-form $\alpha$ on $A$ such that $\alpha$ is asymptotic to $\alpha_C$ in the sense
 that, for some $\delta>0$, 
$$|\nabla^j_C(\Phi_A^*\alpha-\alpha_C)|=O(r^{\upsilon-\delta-j})\text{ as }r\rightarrow\infty\text{ for all }j\in N.$$
The forms on $C$ corresponding to forms in $\mathcal{H}^2_+(A)$ are actually zero, 
but for the other terms in the dimension count one has non-trivial forms on $C$ lifting to $A$ in the sense just described.
   
Specifically, the harmonic representatives of 
the classes in $\Imm j^A_2$ define the homogeneous closed self-dual 2-forms on $C$ with order $O(r^{-2})$
 which lift to define closed self-dual 2-forms on $A$.  Notice that such forms on $A$, given their decay rate on the ends, cannot lie 
in $L^2$ and so do not contribute to $b^2_+(A)$.

Moreover, the sum over $d_{\mathcal{D}}(\upsilon)$ counts the homogeneous 
 closed self-dual 2-forms on $C$ which have rate between $-2$ and $\lambda_+$, and the proof of Theorem \ref{ACdefthm} shows that
these forms on $C$  all lift to closed self-dual 2-forms on $A$.  

The final key part of the proof of Theorem \ref{ACdefthm} is to show that, given a closed self-dual 2-form $\alpha^0$ on $A$ with appropriate decay on 
the ends, one can solve for a transverse self-dual 2-form $\alpha^{\prime}$ on $A$ so that $\varphi_0$ vanishes on the graph of $\alpha^0+\alpha^{\prime}$ via
 the Implicit Function Theorem.  Thus we can extend the infinitesimal AC coassociative deformation of $A$ given by $\alpha^0$ to a genuine deformation.

With these preliminaries we are now able to analyse $A$ further.
 
\begin{prop}\label{alphaAprop} Suppose that $\lambda<0$ and let
$$\mathcal{K}_C(\lambda)=
\Span\{r^{\upsilon+2}(\alpha_{\Sigma}+r^{-1}\d r\w*_{\Sigma}\alpha_{\Sigma})\,:\,\alpha_{\Sigma}\in D(\upsilon)\,,\,\upsilon\in[-2,\lambda]\} 
$$ 
if $\lambda\geq -2$ and set $\mathcal{K}_C(\lambda)=\{0\}$ if $\lambda<-2$.
 
The form $\alpha_A$ over $(R,\infty)\times\Sigma$ given in Proposition \ref{Cnbdprop}(b) can be decomposed into self-dual 2-forms as 
$\alpha_A=\alpha_A^0+\alpha_A^{\prime}$, where $\alpha_A^0\in \mathcal{K}_C(\lambda)$
 and $\alpha_A^{\prime}$ is transverse to $\mathcal{K}_C(\lambda)$ and satisfies, for some $\lambda_-<-2$,
$$|\nabla^j_C\alpha_A^{\prime}|=O(r^{\max\{2\lambda-1,\lambda_-\}-j})\text{ as }r\rightarrow\infty\text{ for all }j\in\N.$$
\end{prop}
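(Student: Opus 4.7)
The plan is to exploit the nonlinear PDE satisfied by $\alpha_A$, coming from the coassociativity of $A$, and then apply the Lockhart--McOwen asymptotic expansion theory to peel off contributions at the indicial rates $\mathcal{D}\cap[-2,\lambda]$, leaving a faster-decaying remainder.

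First I would use Proposition \ref{Cnbdprop} together with Proposition \ref{NAnbdprop}(b) to identify the end of $A$ with the graph of $\alpha_A$ over the end of $C$. Coassociativity of $A$ forces $\varphi_0$ to vanish on this graph, which, by the calculation preceding Proposition \ref{infdefprop}, has the schematic form
\[
d\alpha_A + P(\alpha_A,\nabla_C\alpha_A)=0
\]
on $(R,\infty)\times\Sigma$, where $P$ is pointwise at least quadratic in $(\alpha_A,\nabla_C\alpha_A)$ with smooth, dilation-equivariant coefficients arising from the cone geometry. Combined with the pointwise bounds $|\alpha_A|=O(r^\lambda)$ and $|\nabla_C\alpha_A|=O(r^{\lambda-1})$ from Proposition \ref{Cnbdprop}(b), this gives $|P|=O(r^{2\lambda-1})$ and hence $d\alpha_A=O(r^{2\lambda-1})$. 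Since $\alpha_A$ is self-dual on the 4-dimensional cone, the identity $d^*\alpha_A=-*d\alpha_A$ yields the same bound on $d^*\alpha_A$; differentiating the PDE and applying elliptic regularity in weighted H\"older norms then gives the analogous bounds on all higher $\nabla_C^j$-derivatives.

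Next I would invoke the Lockhart--McOwen asymptotic expansion theorem for the operator $d+d^*$ on self-dual 2-forms, viewed via $r=e^s$ as a translation-invariant cylindrical operator on $\R\times\Sigma$: its indicial roots are precisely $\mathcal{D}$, with homogeneous kernel at rate $\upsilon$ given by $D(\upsilon)$. Since $\mathcal{D}$ is discrete we may choose $\lambda_-<-2$ close enough to $-2$ that $(\lambda_-,-2)\cap\mathcal{D}=\emptyset$, and, by perturbing infinitesimally, arrange that neither $\lambda$ nor $\max\{2\lambda-1,\lambda_-\}$ lies in $\mathcal{D}$. Iterating the expansion through each indicial rate in $[-2,\lambda]$ and stopping before we enter $(\lambda_-,-2)$ produces
\[
\alpha_A=\!\!\!\!\!\sum_{\upsilon\in\mathcal{D}\cap[-2,\lambda]}\!\!\!\!\!\alpha_A^{\upsilon}+\alpha_A',
\]
where each $\alpha_A^{\upsilon}$ is the corresponding homogeneous closed self-dual 2-form on $C$ lying in $\mathcal{K}_C(\lambda)$, and $\alpha_A'$ obeys the claimed decay bound; transversality of $\alpha_A'$ to $\mathcal{K}_C(\lambda)$ is automatic from the construction.

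The main obstacle is verifying the schematic form and quadratic nature of $P$ cleanly enough to extract the sharp $O(r^{2\lambda-1})$ bound, and then justifying the Lockhart--McOwen machinery uniformly through the iterated expansion without accumulating further homogeneous contributions before the target rate is reached. The underlying algebra follows from the computation in \cite{McLean}, and the weighted analysis on asymptotically conical coassociative 4-folds has already been set up in \cite{LotayAC}; the care lies in tracking the rates carefully so that one exits the window $(\lambda_-,-2)$ precisely without crossing additional indicial roots.
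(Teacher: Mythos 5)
Your overall strategy coincides with the paper's: you expand the coassociativity condition on the graph of $\alpha_A$ to obtain $\d\alpha_A + P = 0$ with $P$ quadratic, then use the asymptotic theory for the (self-dual) $\d+\d^*$ operator on the cone to strip off homogeneous closed self-dual contributions at the indicial rates in $[-2,\lambda]$, leaving a faster-decaying remainder transverse to $\mathcal{K}_C(\lambda)$. The paper does exactly this, citing the decomposition implicit in Theorem \ref{ACdefthm} rather than re-deriving the Lockhart--McOwen expansion, but these are the same underlying machinery.

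However, there is a concrete numerical gap in your estimate for the quadratic term. You claim $|P|=O(r^{2\lambda-1})$ from $|\alpha_A|=O(r^\lambda)$, $|\nabla_C\alpha_A|=O(r^{\lambda-1})$; but because the tubular neighbourhood map $\Upsilon_C$ is dilation-equivariant, the only dilation-covariant quadratic 3-form expressions available involve $r^{-1}\alpha_A$ and $\nabla_C\alpha_A$ homogeneously, both of order $O(r^{\lambda-1})$, so the correct bound is $|P|=O(r^{2\lambda-2})$ (this is exactly what the paper records, and why the paper emphasizes that $r^{-1}|\alpha_A|$ and $|\nabla_C\alpha_A|$ tend to zero). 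This one-power error then propagates: the Lockhart--McOwen expansion gains one power over the source, so $d\alpha_A'=-P=O(r^{2\lambda-2})$ integrates to $\alpha_A'=O(r^{2\lambda-1})$ as claimed, whereas your weaker source bound $O(r^{2\lambda-1})$ would only deliver $\alpha_A'=O(r^{2\lambda})$, which does \emph{not} imply the proposition (since $r^{2\lambda}$ decays more slowly than $r^{2\lambda-1}$). Your subsequent assertion that $\alpha_A'$ "obeys the claimed decay bound" therefore does not follow from your own estimate; with the corrected quadratic estimate $|P|=O(r^{2\lambda-2})$ the argument closes exactly as in the paper.
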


\begin{proof} If $\lambda<-2$ then we may choose $\alpha_A^0=0$ and $\lambda_-=\lambda$, so suppose 
from now on that $\lambda\in[-2,0)$.

  Since $A$ is coassociative, $\varphi_0$ vanishes on the graph of $\alpha_A$.  By \cite[Proposition 4.2]{McLean} and the compatibility
 conditions we have imposed on $\Upsilon_C$, we see as in the proof of \cite[Proposition 6.9]{LotayCS} that 
\begin{equation}\label{phiexpandeq}
\varphi_0\big(\Upsilon_C(r,\sigma,\alpha_A(r,\sigma))\big)
=\d\alpha_A(r,\sigma)+P_C\big(r,\sigma,\alpha_A(r,\sigma),\nabla_C\alpha_A(r,\sigma)\big),
\end{equation}
where $$\big|\nabla^j_CP_C\big(r,\sigma,\alpha_A(r,\sigma),\nabla_C\alpha_A(r,\sigma)\big)\big|=O(r^{2\lambda-2-j}).$$  Here we have used the fact that 
$r^{-1}|\alpha_A|$ and $|\nabla_C\alpha_A|$ tend to zero as $r\rightarrow\infty$.   

Let $\alpha_A^0$ be the projection of $\alpha_A$ onto $\mathcal{K}_C(\lambda)$ and let $\alpha_A^{\prime}=\alpha_A-\alpha_A^0$.  Closed self-dual 2-forms 
on $C$ can be written as linear combinations of forms of the following type:
$$r^{\upsilon+2}\big(\beta_{\Sigma}(r,\sigma)+r^{-1}\d r\wedge *_{\Sigma}\beta_{\Sigma}(r,\sigma)\big)$$
where $\beta_{\Sigma}$ is a polynomial in $\log r$ which takes values in $C^{\infty}(\Lambda^2T^*\Sigma)$ and $\upsilon\in\mathcal{D}$.  This fact, which is true
 more generally for solutions of suitable elliptic equations, is a key part of the work in \cite{LockhartMcOwen}.  (Although cylinders are discussed in \cite{LockhartMcOwen} rather 
than cones, one can transform between these situations in a natural way.)  By \cite[Proposition 5.8]{LotayAC}, in fact 
$\beta_{\Sigma}(r,\sigma)=\alpha_{\Sigma}(\sigma)$ is independent of $r$ and $\alpha_{\Sigma}$ satisfies \eq{Deq}.  Hence, the condition that $\alpha_A^{\prime}$ 
is transverse to $\mathcal{K}_C(\lambda)$ means that $\alpha_A^{\prime}$ is transverse to the closed self-dual forms on $C$ for rates $\upsilon\in[-2,0)$.

 Now $\d\alpha_A^0=0$ so 
$$\d\alpha_A^{\prime}(r,\sigma)=-P_C\big(r,\sigma,\alpha_A(r,\sigma),\nabla_C\alpha_A(r,\sigma)\big)$$ and thus
 $|\nabla^j_C\d\alpha_A^{\prime}|=O(r^{2\lambda-2-j})$ for all $j\in\N$.  Since $\alpha_A^{\prime}$ is transverse to
the closed self-dual 2-forms on $C$ for rates in $[-2,0)$ and $2\lambda-2\neq -1$, 
we may integrate and choose $\alpha_A^{\prime}$ so that $|\alpha_A^{\prime}|=O(r^{2\lambda-1})$.  
\end{proof}

\begin{remark}
We see that if $\lambda<-2$ then Proposition \ref{alphaAprop} is irrelevant.  This proposition marks the 
significant departure from the work in \cite{Lotaydesing} where we restricted ourselves to the case $\lambda<-2$.
\end{remark}

We now make some observations to understand the obstructions to the gluing procedure.  
If we desingularize $N$ using $tA$ we will obtain a smooth 4-dimensional 
submanifold $\tilde{N}(t)$ of $M$ which is diffeomorphic to the disjoint union of the compact sets $tK_A$ and $K_N$ and the portion of the cone $(tR,\epsilon)\times\Sigma$.  

Suppose we construct $\tilde{N}(t)$ so that $|\varphi|_{\tilde{N}(t)}|=O(t^\eta)$ as $t\rightarrow 0$.  Clearly we need $\eta>0$ so that $\tilde{N}(t)$ converges 
to the coassociative 4-fold $N$ as $t\rightarrow 0$, but we also need $\eta$ sufficiently large to make $|\varphi|_{\tilde{N}(t)}|$ small enough as $t\rightarrow
0$ so that the effect of $\tilde{N}(t)$ becoming singular is dominated by the rate at which $\tilde{N}(t)$ is becoming coassociative.

We can view the subset of $\tilde{N}(t)$ which is diffeomorphic to $(tR,\epsilon)\times\Sigma$ as the graph of 
a self-dual 2-form $\alpha$.
Since we are using $tA$ to construct $\tilde{N}(t)$ we need to understand the behaviour of $\alpha$ as $t\rightarrow 0$.  Recall that we 
identified the ends of $A$ with the graph of a self-dual 2-form $\alpha_A$ over $(R,\infty)\times\Sigma$ in Proposition 
\ref{Cnbdprop}.  
Observe that, from this identification,
we may write $tA$ as the graph of $\alpha_{tA}=t^3\delta_{t^{-1}}^*\alpha_A$ over $(tR,\infty)\times\Sigma$, recalling the dilation action \eq{dilationeq} on 
$\Lambda^2_+T^*C$.   If $\alpha_A$ were homogeneous of rate $\lambda$, then
\begin{equation}\label{alphatA}
\alpha_{tA}=t^3\delta_{t^{-1}}^*\alpha_A=
t^3\delta_{t^{-1}}^*\big(r^{\lambda+2}(\alpha_{\Sigma}+r^{-1}\d r\wedge*_{\Sigma}\alpha_{\Sigma})\big)
=t^3(t^{-1})^{\lambda+2}\alpha_A=t^{1-\lambda}\alpha_A.
\end{equation}
More generally, the condition that $\alpha_A$ is of order $O(r^{\lambda})$ implies that  $\alpha_{tA}$ is of order $O(t^{1-\lambda}r^{\lambda})$.  

Naively, we would 
construct $\alpha$ by interpolating between $\alpha_{tA}$ and $\alpha_N$ given 
in Proposition \ref{Cnbdprop} over $I\times\Sigma$ for some suitable chosen interval $I\subseteq (tR,\epsilon)$ so that if $r\in I$ then $r=O(t^{\nu})$ for some 
$\nu\in[0,1]$.   From this choice of $I$,
the contribution to the behaviour of $\alpha$  from $\alpha_{tA}$ is of order $O(t^{1-\lambda}r^{\lambda})$ and from $\alpha_N$ is of order $O(r^{\mu})$.

Using a similar equation to \eq{phiexpandeq}, naively the behaviour of $|\varphi|_{\tilde{N}(t)}|$
 is dominated by $|\d\alpha|$, which we can estimate on $I\times\Sigma$ 
using terms of order $O(t^{1-\lambda}r^{\lambda-1})=O(t^{(1-\lambda)(1-\nu)})$ from $\alpha_A$ and 
terms of order $O(r^{\mu-1})=O(t^{\nu(\mu-1)})$ from $\alpha_N$.  Since $\lambda<1$ and $\mu>1$, we would naturally require that $0<\nu<1$ to ensure that  
$|\varphi|_{\tilde{N}(t)}|=O(t^{\eta})$ for $\eta>0$.  

However, we now observe that if we take $r\in I$ 
and consider the natural inclusion $\iota_r:\Sigma\rightarrow\tilde{N}(t)$ of $\{r\}\times\Sigma$ in 
$\tilde{N}(t)$, we see that to have any hope of ensuring that we can desingularizing $N$ with $tA$ we would need that 
$$[\varphi|_{\tilde{N}(t)}]\cdot[\iota_r(\Sigma)]=\int_{\iota_r(\Sigma)}\varphi\rightarrow 0$$ as $t\rightarrow 0$.  We can calculate that, if
$|\varphi|_{\tilde{N}(t)}|=O(t^{\eta})$ then
$$\int_{\iota_r(\Sigma)}\varphi 
=O(t^{\eta-3}r^3)=O(t^{\eta-3(1-\nu)}),$$
where the factor $t^{-3}$ appears 
because the metric on the interpolation region in $\tilde{N}(t)$ blows up as $t\rightarrow 0$ 
since $\tilde{N}(t)$ becomes singular.  
This suggests that we need $\eta>3(1-\nu)$ to ensure that $[\varphi|_{\tilde{N}(t)}]\cdot[\iota_r(\Sigma)]\rightarrow 0$ as $t\rightarrow 0$.  
  
  If we just use the estimates we had before, we see that we would require that $(1-\lambda)(1-\nu)>3(1-\nu)$ and $\nu(\mu-1)>3(1-\nu)$ to achieve 
$\eta>3(1-\nu)$.  The condition involving $\mu$
is equivalent to $\nu>\frac{3}{\mu+2}$ which, since we think of $\mu>1$ as being close to $1$, means that we just need to choose $\nu$ sufficiently close to $1$. 
Notice that taking $\nu$ closer to $1$ will ensure that the connect sum occurs over a smaller region and 
$\tilde{N}(t)$ is closer to the initial CS coassociative $N$ as a submanifold, and hence is a natural constraint.  However, the equation
 $(1-\lambda)(1-\nu)>3(1-\nu)$ can only hold for $\lambda<-2$ since $\nu\in(0,1)$.

So it would appear that we need $\lambda<-2$ for $|\varphi|_{\tilde{N}(t)}|$ to be small enough for the desingularization to succeed.  
 (This is a way to interpret how this condition arises in \cite{Lotaydesing}.)  Moreover, this suggests that for rates $\lambda\geq -2$ we should
 see obstructions to our gluing procedure, whereas for $\lambda<-2$ we should not.

However, if we can arrange $\alpha$ to be \emph{closed} then, again using an equation like \eq{phiexpandeq}, we have that 
$|\varphi|_{\tilde{N}(t)}|$ is now dominated by the nonlinear terms in $\alpha$ and its derivatives which, roughly speaking, 
are then bounded by $|r^{-1}\alpha|^2$ and $|\nabla_C\alpha|^2$.  One sees that these terms are of order 
$O(t^{2(1-\lambda)(1-\nu)})$ and $O(t^{2\nu(\mu-1)})$, so
we have thus improved our estimate drastically as we now only require $2-2\lambda>3$ for our analysis to go through, 
which is equivalent to $\lambda<-\frac{1}{2}$. (This 
in part can be seen from Proposition \ref{alphaAprop}, where now $2\lambda-1<-2$, so $\max\{2\lambda-1,\lambda_-\}<-2$.)
  
We deduce that for rates $\lambda\in[-2,-\frac{1}{2})$, the obstructions to the desingularization 
should arise purely from the ability to glue $tA$ and $N$ using a closed 
self-dual 2-form, which is a natural constraint in the context of coassociative geometry.  

These considerations allow us  to state the key condition that we require to overcome the obstructions.
Notice that since $\alpha_A^0$ is homogeneous it is defined on the entire cone $C$.  

\begin{dfn}\label{matchingdfn}
Let $\mathcal{D}\cap[-2,\lambda]=\{\lambda_1,\ldots,\lambda_d\}$ with $\lambda_1<\ldots<\lambda_d$.   Write $\alpha_A^0$ given by Proposition \ref{alphaAprop} as
$\alpha_A^0=\sum_{i=1}^d\alpha_A^i$, so there are $\alpha_{\Sigma}^i\in D(\lambda_i)$ such that
$$\alpha_A^i(r,\sigma)=r^{\lambda_i+2}\big(\alpha_{\Sigma}^i(\sigma)+r^{-1}\d r\w*_{\Sigma}\alpha_{\Sigma}^i(\sigma)\big).$$  
We say that $A$ and $N$ satisfy the \emph{matching condition} if
 there exists $\delta_0>0$ and for $i=1,\ldots,d$ there exists a closed self-dual form 
$\alpha_N^i$ on $\hat{N}$ such that 
$$\big|\nabla_C^j\big(\Psi_N^*\alpha_N^i(r,\sigma)-\alpha_A^i(r,\sigma)\big)\big|=O(r^{\lambda_i+\delta_0-j})\text{ as }r\rightarrow 0\text{ for all }j\in\N.$$
Effectively, this says that each infinitesimal coassociative deformation $\alpha^i_A$ of the cone $C$, which we know lifts to an infinitesimal coassociative 
deformation of $A$ by Theorem \ref{ACdefthm}, also lifts to an 
infinitesimal coassociative deformation $\alpha^i_N$ of $\hat{N}$ so that  to ``leading order'' $\alpha_N^i$ tends to $\alpha_A^i$ as $r\rightarrow 0$.
\end{dfn}

\begin{remarks}
\begin{itemize}
\item[]
\item[(a)]
As we shall see, the matching condition precisely allows us to define our desingularization 
so that over $(tR,\epsilon)\times\Sigma$ it is the 
graph of a self-dual 2-form whose leading order term is closed.
\item[(b)] Notice from \eq{alphatA} that under the action of dilation in \eq{dilationeq} we see that 
$\alpha_A^i\mapsto t^3\delta_{t^{-1}}^*\alpha_A^i=t^{1-\lambda_i}\alpha_A^i$.
\end{itemize}
\end{remarks}

We now give further geometric meaning for (part of) our matching condition.  
 Notice that if $\upsilon\neq -2$ and $\alpha_{\Sigma}\in D(\upsilon)$ then
 $\alpha_{\Sigma}$ is exact, whereas if $\alpha_{\Sigma}\in D(-2)$ then $\alpha_{\Sigma}$ uniquely determines a cohomology class in 
 $H^2(\Sigma)$.  Hence, there exists unique $\alpha_{\Sigma}^0\in D(-2)$ such that 
$[\alpha_A^0]=[\alpha_\Sigma^0]\in H^2(\Sigma)\cong H^2(C)$.  

In the notation of Definition \ref{matchingdfn}, we have that $\alpha_{\Sigma}^0=0$ if $\lambda_1>-2$ and $\alpha_{\Sigma}^0=\alpha_{\Sigma}^1$ if 
$\lambda_1=-2$.  The class $[\alpha_{\Sigma}^0]$ is not mysterious but has a natural 
geometric interpretation.  Recall the map $\jmath_A$ given by Proposition \ref{jmathprop} and \eq{Aseq}.

\begin{prop}\label{infdilprop} 
Let $v$ be the
dilation vector field on $\R^7$ and let $u$ be the projection of $v$ onto the normal bundle of $A$.  We have that $\d\jmath_A(u)=0$,
$$[\jmath_A(u)]=3[\varphi_0]\in H^3(\R^7;A)\cong H^2(A)\quad\text{and}$$ 
$$j^A_2[\jmath_A(u)]=3[\alpha_\Sigma^0]\in H^2(\Sigma).$$
\end{prop}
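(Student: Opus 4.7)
The plan is to base all three parts on the single identity $\jmath_A(u) = (v\lrcorner\varphi_0)|_A$. Writing $v = u + v^T$ with $v^T$ tangent to $A$, the coassociative condition $\varphi_0|_A = 0$ forces $(v^T\lrcorner\varphi_0)|_A = 0$, proving the identity. For part (a), I would apply Cartan's formula: since $\varphi_0$ has constant coefficients and degree three, $\mathcal{L}_v\varphi_0 = 3\varphi_0$, and combined with $\d\varphi_0 = 0$ this yields $\d(v\lrcorner\varphi_0) = 3\varphi_0$. Pulling back to $A$ and using $\varphi_0|_A = 0$ gives $\d\jmath_A(u) = 0$ at once.

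For part (b), I would work in the relative de Rham complex of $(\R^7, A)$, in which cocycles of degree three are pairs $(\omega, \eta) \in \Omega^3(\R^7) \oplus \Omega^2(A)$ with $\d\omega = 0$ and $\omega|_A = \d\eta$, and $[\varphi_0]$ is represented by $(\varphi_0, 0)$. Taking $\sigma = v\lrcorner\varphi_0$, the identity $\d\sigma = 3\varphi_0$ together with $\sigma|_A = \jmath_A(u)$ shows $(3\varphi_0, \jmath_A(u))$ to be the coboundary of $(\sigma, 0)$. Hence $3(\varphi_0, 0)$ and $(0, -\jmath_A(u))$ are cohomologous in $H^3(\R^7, A)$, which under the connecting isomorphism to $H^2(A)$ yields $[\jmath_A(u)] = 3[\varphi_0]$.

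For part (c), the plan is to identify $\Phi_A^*\jmath_A(u)$ with the asymptotic cone-description of the infinitesimal dilation deformation of $A$. The one-parameter family $A_s = e^s A$ has cone-description $\alpha_{A_s}(r,\sigma) = e^{3s}\alpha_A(e^{-s}r, \sigma)$ by the equivariance \eq{dilationeq} of $\Upsilon_C$, while by Proposition \ref{NAnbdprop}(b) any normal deformation $\gamma_s$ of $A$ satisfies $\Phi_A^*\gamma_s = \alpha_{A_s} - \alpha_A$. Differentiating at $s = 0$ and using the compatibility of $\Upsilon_A$ with $\jmath_A$ to identify $\dot\gamma_0 = \jmath_A(u)$, one obtains
\begin{equation*}
\Phi_A^*\jmath_A(u) = 3\alpha_A - \mathcal{L}_v\alpha_A
\end{equation*}
on the end. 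Decomposing $\alpha_A = \sum_i \alpha_A^i + \alpha_A'$ via Proposition \ref{alphaAprop} and using the homogeneity $\mathcal{L}_v\alpha_A^i = (\lambda_i+2)\alpha_A^i$, one finds $3\alpha_A^i - \mathcal{L}_v\alpha_A^i = (1-\lambda_i)\alpha_A^i$. Pulling back to a level $\{r_0\} \times \Sigma$ kills the $\d r$-components, leaving $(1-\lambda_i)r_0^{\lambda_i+2}\alpha_\Sigma^i$ on $\Sigma$; since $\alpha_\Sigma^i$ is exact for $\lambda_i \neq -2$, only the rate-$(-2)$ term survives cohomologically, contributing precisely $3[\alpha_\Sigma^0]$.

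The hard part will be to rigorously dismiss the contribution of $\alpha_A'$. The form $3\alpha_A' - \mathcal{L}_v\alpha_A'$ is closed on $(R,\infty) \times \Sigma$ (as the difference of two closed forms), so its pullbacks to different slices represent a single class in $H^2(\Sigma)$. On the other hand, since $\alpha_A'$ has rate $\max\{2\lambda-1, \lambda_-\} < -2$, these pullbacks decay to $0$ in $L^2(\Sigma)$ as $r_0 \to \infty$. A Hodge-theoretic argument on the compact link $\Sigma$ then forces the harmonic representative of the common class to have vanishing $L^2$-norm, so the class is zero. Combining with the previous paragraph gives $j_2^A[\jmath_A(u)] = 3[\alpha_\Sigma^0]$.
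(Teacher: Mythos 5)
Your proposal is correct, and for parts (a) and (b) it is the paper's argument verbatim: $\jmath_A(u)=(v\lrcorner\varphi_0)|_A$ because $\varphi_0|_A=0$, then $\d(v\lrcorner\varphi_0)=\mathcal{L}_v\varphi_0=3\varphi_0$ gives both closedness and, via the relative complex, $[\jmath_A(u)]=3[\varphi_0]$ (you merely spell out the relative de Rham cocycle bookkeeping that the paper compresses into one sentence).

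For part (c) your route is the same in outline — identify $\jmath_A(u)$ with the infinitesimal dilation deformation, transfer it to the cone via Propositions \ref{Cnbdprop} and \ref{NAnbdprop}, and observe that only the rate $-2$ homogeneous piece survives in $H^2(\Sigma)$ — but your execution is more careful than the paper's and is worth comparing. The paper differentiates $t^3\alpha_A$ and asserts that $\Phi_A^*\jmath_A(u)$ equals $3\alpha_A^0$ plus terms of order strictly less than $-2$. The honest derivative of the family $e^{3s}\,\delta_{e^{-s}}^*\alpha_A$ is, as you compute, $3\alpha_A-\mathcal{L}_v\alpha_A=\sum_i(1-\lambda_i)\alpha_A^i+(3\alpha_A'-\mathcal{L}_v\alpha_A')$, so the homogeneous pieces of rate $\lambda_i\in(-2,\lambda]$ persist with coefficient $(1-\lambda_i)\neq 3$ and are \emph{not} of order below $-2$; they are harmless only because each $\alpha_\Sigma^i$ with $\lambda_i\neq -2$ is exact on $\Sigma$, which you invoke explicitly, while the rate $-2$ piece carries exactly the coefficient $1-(-2)=3$. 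You also supply the argument the paper leaves implicit for discarding the remainder $\alpha_A'$: its slice restrictions represent a fixed class in $H^2(\Sigma)$ whose harmonic representative is forced to vanish by the decay at rate $\max\{2\lambda-1,\lambda_-\}<-2$. So your proof both reaches the paper's conclusion and quietly repairs a small imprecision in its statement of the intermediate asymptotics.
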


\begin{remark}
Proposition \ref{infdilprop} says that the cohomology class of the infinitesimal dilation deformation of $A$
is a multiple of the class of $\varphi_0$ in $H^3(\R^7;A)\cong H^2(A)$, 
which itself projects to the class of $\alpha_\Sigma^0$ in $H^2(\Sigma)$.
\end{remark}

\begin{proof}
First $\jmath_A(u)=u\lrcorner\varphi_0|_A=v\lrcorner\varphi_0|_A$ since 
$\varphi_0|_A=0$.  Now $\varphi_0$ is homogeneous of degree 3 so $\d(v\lrcorner\varphi_0)=\mathcal{L}_v\varphi_0=3\varphi_0$.  
Hence $\d\jmath_A(u)=3\varphi_0|_A=0$.  From this formula one also deduces that $[\jmath_A(u)]=3[\varphi_0]\in H^3(\R^7;A)$.  

The dilation deformation, $A\mapsto tA$ for $t>0$, is defined by a self-dual 2-form on $A$.  Over $C$ the dilation is given by 
$\alpha_A\mapsto \alpha_{tA}=t^3\delta_{t^{-1}}^*\alpha_A$ from \eq{dilationeq}.  Thus, the dilation deformation is defined by 
$\frac{\d}{\d t}|_{t=1}\alpha_{tA}=3\alpha_A$ over $C$.  Hence, the corresponding infinitesimal deformation is the 
closed part of $3\alpha_A$ which, to leading order (that is, for order at least $O(r^{-2})$), is given by $3\alpha_A^0$.  Now $\jmath_A(u)$ also defines the infinitesimal dilation deformation, so 
$\Phi_A^*\jmath_A(u)=3\alpha_A^0$ plus terms with order strictly less than $-2$.  Hence $j^A_2[\jmath_A(u)]=3[\alpha_A^0]=3[\alpha_{\Sigma}^0]$ as claimed.

\end{proof}

As for $A$ in \eq{Aseq}, we have an exact sequence for $\hat{N}$:
\begin{equation}\label{Nseq}
\cdots\longrightarrow H^m_{\text{cs}}(\hat{N})
\,{\buildrel\iota^N_m\over\longrightarrow}\,
H^m(\hat{N})\,{\buildrel j^N_m\over\longrightarrow}\,
H^m(\Sigma)\,{\buildrel
\partial^N_m\over\longrightarrow}\,
H_{\text{cs}}^{m+1}(\hat{N})\longrightarrow\cdots.
\end{equation}
We can now interpret part of the matching condition in topological terms.

\begin{prop}\label{alphaSigmaprop}
Suppose that $\alpha_{\Sigma}$ is a harmonic representative of a cohomology class in $\Imm j^N_2$. There exists a closed self-dual 2-form $\alpha$ on $\hat{N}$ and $\delta_0>0$ 
such that, for all $j\in\N$,  
$$\big|\nabla^j_C\big(\Psi_N^*\alpha(r,\sigma)-(\alpha_{\Sigma}(\sigma)+r^{-1}\d r\w *_{\Sigma}\alpha_{\Sigma}(\sigma))\big)\big|=O(r^{-2+\delta_0-j})
\text{ as }r\rightarrow
0.$$
\end{prop}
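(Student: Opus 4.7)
The plan is to produce $\alpha$ in two stages: first construct a closed 2-form on $\hat{N}$ with the correct leading-order asymptotic using the cohomological hypothesis, and then correct it to a genuinely closed self-dual form using the weighted elliptic theory on CS 4-manifolds from \cite{LotayCS}.  The structure is dual to the proof of Proposition \ref{alphaAprop}, with the critical rate now approached as $r\to 0$ rather than $r\to\infty$.

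For the first stage, set $\tilde{\alpha}_0 := \alpha_\Sigma^0 + r^{-1}\d r\w *_\Sigma\alpha_\Sigma^0$, the closed self-dual 2-form on $C$ at rate $-2$ determined by $\alpha_\Sigma^0\in D(-2)$.  Pick a smooth cutoff $\rho(r)$ with $\rho\equiv 1$ near $r=0$ and $\rho\equiv 0$ for $r\geq\epsilon/2$, push $\rho\tilde{\alpha}_0$ forward via $\Psi_N$, and extend by zero to obtain a smooth 2-form $\alpha_1$ on $\hat{N}$.  A direct calculation using $\d\tilde{\alpha}_0=0$ gives $\d\alpha_1 = (\Psi_N)_*\big(\rho'(r)\,\d r\w\alpha_\Sigma^0\big)$, which is compactly supported.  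The de Rham description of the connecting homomorphism in \eq{Nseq} identifies $[\d\alpha_1]\in H^3_{\text{cs}}(\hat{N})$ with $\partial_2^N[\alpha_\Sigma^0]$; the hypothesis $[\alpha_\Sigma^0]\in\Imm j_2^N$ and exactness of \eq{Nseq} force this class to vanish, so there exists a compactly supported $\omega\in\Omega^2(\hat{N})$ with $\d\omega=\d\alpha_1$.  Then $\alpha_2:=\alpha_1-\omega$ is closed on $\hat{N}$ and satisfies $\Psi_N^*\alpha_2=\tilde{\alpha}_0$ for $r$ sufficiently small.

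For the second stage, decompose $\alpha_2=\alpha_2^++\alpha_2^-$ with respect to $g_N$.  Since the CS condition gives $g_N-g_C=O(r^\mu)$ with $\mu\in(1,2)$ while $|\tilde{\alpha}_0|_{g_C}=O(r^{-2})$, one finds $|\nabla_C^j\alpha_2^-|=O(r^{\mu-2-j})$ near the singularity.  Seeking $\alpha:=\alpha_2+\eta$ closed and self-dual forces $\eta$ closed with $\eta^-=-\alpha_2^-$; writing $\eta=\eta^+-\alpha_2^-$, closedness reduces to the linear elliptic equation $\d\eta^+=\d\alpha_2^-$ for a self-dual 2-form $\eta^+$.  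I would solve this using the Lockhart--McOwen weighted Fredholm theory for $\d\colon\Omega^2_+\to\Omega^3$ on the CS 4-manifold $\hat N$ developed in \cite{LotayCS}, at a weight $-2+\delta_0$ with $\delta_0>0$ small, lying in $(0,\mu)$ and such that $-2+\delta_0\notin\mathcal{D}$.  The main obstacle is verifying solvability: one must check that $\d\alpha_2^-$ pairs trivially with the finite-dimensional cokernel of the adjoint operator at this weight.  Exactness of $\d\alpha_2^-=-\d\alpha_2^+$ kills the cohomological part of the obstruction, and the remaining cokernel contributions come from homogeneous solutions in $\mathcal{K}_C$ at rates in $(-2,-2+\delta_0)$, a set which is empty for $\delta_0$ small enough by discreteness of $\mathcal{D}$.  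Taking $\alpha:=\alpha_2+\eta=\alpha_2^++\eta^+$ then yields a closed self-dual form satisfying $|\nabla_C^j(\Psi_N^*\alpha-\tilde{\alpha}_0)|=O(r^{-2+\delta_0-j})$, as required.
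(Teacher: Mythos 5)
Your argument is correct and is essentially the paper's own proof: the paper likewise extends $\alpha_\Sigma^0$ to a closed 2-form $\beta$ on $\hat{N}$ using the hypothesis $[\alpha_\Sigma^0]\in\Imm j_2^N$ (via the cutoff/connecting-homomorphism argument, cited from Marshall), passes to its self-dual part in the guise of $\gamma=\beta+*\beta=2\beta^+$, and removes $\d\gamma$ by a self-dual correction at weight $-2+\delta_0$ using the identity $\d\big(L^2_{k,-2+\delta_0}(\Lambda^2T^*\hat{N})\big)=\d\big(L^2_{k,-2+\delta_0}(\Lambda^2_+T^*\hat{N})\big)$ from $\S$\ref{stabsec}, which is exactly your solvability step. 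The only quibbles are bookkeeping: the induced metric converges to $g_C$ at rate $O(r^{\mu-1})$ rather than $O(r^{\mu})$, so $\alpha_2^-$ decays like $r^{\mu-3}$ rather than $r^{\mu-2}$ and one needs $\delta_0<\mu-1$ with $(-2,-2+\delta_0]\cap\mathcal{D}=\emptyset$ (not merely $-2+\delta_0\notin\mathcal{D}$); since $\mu-3>-2$, neither point changes the argument.
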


\begin{proof}
Since $\alpha_{\Sigma}$ is a closed 2-form on $C$ such that $[\alpha_{\Sigma}]\in\Imm j^N_2$, we can pull it back to
the end of $\hat{N}$ and extend it smoothly to define a closed 2-form $\beta$ 
(c.f. \cite[Proposition 5.8 \& Corollary 5.9]{Marshall}) so that $|\nabla^j_C\Psi_N^*\beta|=O(r^{-2-j})$ as $r\rightarrow 0$ 
for all $j\in\N$. 
 Moreover $\alpha_{\Sigma}$ is coclosed, so since the metric on $\hat{N}$ converges to $g_C$ with rate $O(r^{\mu-1})$ and a dilation-invariant 3-form on $C$ 
has order $O(r^{-3})$, we have that 
$|\nabla^j_C\Psi_N^*\d\!*\!\beta|=O(r^{\mu-4-j})$ as $r\rightarrow 0$ for all $j\in\N$.  
Thus, if $\gamma=\beta+*\beta$, 
$$\big|\nabla^j_C\big(\Psi_N^*\gamma(r,\sigma)-(\alpha_{\Sigma}(\sigma)+r^{-1}\d r\w *_{\Sigma}\alpha_{\Sigma}(\sigma))\big)\big|=O(r^{\mu-3-j})
\text{ as }r\rightarrow 0.$$
Since $\mu-3>-2$ we would be done except that $\gamma$ is not necessarily closed.   

Now, $\d\gamma$ lies in the space of exact forms which decay at rate $O(r^{\mu-4})$, so lies in the image of $\d$ acting on 2-forms which 
decay with rate $O(r^{\mu-3})$.  As we shall see in $\S$\ref{stabsec}, given $k\geq 4$ and $\delta_0>0$ such that $(-2,-2+\delta_0]\cap\mathcal{D}=\emptyset$ we have that
$$\d\big(L^2_{k,-2+\delta_0}(\Lambda^2T^*\hat{N})\big)=\d\big(L^2_{k,-2+\delta_0}(\Lambda^2_+T^*\hat{N})\big)$$ 
 (see \cite[$\S$3.4]{Lotaydesing} for example for the definition of the 
weighted Sobolev spaces, which control the decay rate of forms near the singularity).  Choosing $-2+\delta_0<\mu-3$, there exists $\gamma^{\prime}\in L^2_{k,-2+\delta_0}(\Lambda^2_+T^*\hat{N})$ such that $\d\gamma^{\prime}=\d\gamma$.  Taking $\alpha=\gamma-\gamma^{\prime}$ 
and elliptic regularity gives the result.
\end{proof}

We deduce from Proposition \ref{alphaSigmaprop} that part of the matching condition is purely topological; that is, we can replace the condition that a closed 
self-dual 2-form exists on $\hat{N}$ asymptotic to the rate $-2$ part of $\alpha_A^0$ with the assumption that $[\alpha_\Sigma^0]$
 lies in $\Imm j_2^N$.  This motivates the following definition for convenience.
 
 \begin{dfn}\label{topmatchingdfn}  Recall \eq{Aseq} and \eq{Nseq}.  
 We say that $A$ and $N$ satisfy the \emph{topological matching condition} if 
$$j^A_2[\varphi_0]\in j^N_2\big(H^2(\hat{N})\big)\subseteq H^2(\Sigma),$$
 where $\varphi_0$ defines a cohomology class $[\varphi_0]\in H^3(\R^7;A)\cong H^2(A)$.
 \end{dfn}

We have identified part of the matching condition as a topological constraint, but the remainder is analytic and still needs to interpreted 
geometrically.  As we remarked, homogeneous closed self-dual 2-forms on $C$ with rates in $(-2,0)$ always extend to $A$.  
However, this is not the case for $\hat{N}$, and such forms which do not extend correspond to \emph{obstructions} to the deformation 
theory of $\hat{N}$ (c.f. \cite{LotayCS}).  Therefore if the deformation theory of $\hat{N}$ is \emph{unobstructed}, the analytic part of the 
matching condition will hold.  We shall discuss these ideas in detail later.

\medskip
The work in this subsection leads us to impose the following conditions on $A$ and $N$ from now on. 

\begin{conds*} Assume that
\begin{itemize}
\item the rate $\lambda$ of convergence of $A$ to $C$ satisfies $\lambda<-\frac{1}{2}$ and
\item $A$ and $N$ satisfy the matching condition in Definition \ref{matchingdfn}.
\end{itemize}
In particular, the topological matching condition in Definition \ref{topmatchingdfn} is satisfied.
\end{conds*}

From our discussion it is clear that, unless we make further assumptions or develop an even more sophisticated construction, 
the conditions we have imposed will be essential for our analysis to go through.  

 We shall see later that the condition $\lambda<-\frac{1}{2}$ allows for far more examples of conical singularities than the
 situation in \cite{Lotaydesing} where $\lambda<-2$.  One could conceivably impose further conditions on $\alpha$ for $\lambda\geq -\frac{1}{2}$
 to ensure that $|\varphi|_{\tilde{N}(t)}|$ has the required decay property for our analysis to work, but these appear to be less geometrically natural 
  so we choose not to pursue this.

\subsection{Construction}

We now define our (approximately coassociative) desingularizations $\tilde{N}(t)$ of $N$ using $tA$.  Recall the notation of the matching condition 
in Definition \ref{matchingdfn} and suppose without loss of generality that $\delta_0$ is small enough that 
\begin{equation}\label{delta0eq}
\mu>1+2\delta_0.
\end{equation}

\begin{dfn}\label{tildeNdfn} Let $f_{\text{inc}}:\R\rightarrow[0,1]$ be a smooth increasing function such that
$$f_{\text{inc}}(x)=\left\{\begin{array}{lll} 0 & & \text{for $x\leq 0$,}\\
1&&\text{for $x\geq1$}\end{array}\right.$$ and
$f_{\text{inc}}(x)\in(0,1)$ for $x\in(0,1)$.  Let $\nu>0$  
be such that
\begin{equation}\label{nueq}
\frac{1-\lambda}{1+\delta_0-\lambda}<\nu<1.
\end{equation}
Recall that we restrict the scale $t$ to lie in $(0,\tau)$.
Choose $\tau$ sufficiently small so that
$$0<\tau R<\frac{1}{2}\tau^{\nu}<\tau^{\nu}<\epsilon.$$
Let $\alpha_N^0(t)=\sum_{i=1}^dt^{1-\lambda_i}\alpha_N^i$, recall the dilation action on $\Lambda^2_+T^*C$ in \eq{dilationeq}, and
  define $\alpha_C(t)$ on $(tR,\epsilon)\times\Sigma$ by 
\begin{align}
\alpha_C(t)(r,\sigma)&=t^3\big(1-f_{\text{inc}}(2t^{-\nu}r-1)\big)\delta_{t^{-1}}^*\alpha_A(r,\sigma)\nonumber\\&\quad+f_{\text{inc}}(2t^{-\nu}r-1)\big(\Psi_N^*\alpha_N^0(t)(r,\sigma)+\alpha_N(r,\sigma)\big)\nonumber\\
&=t^3\delta_{t^{-1}}^*\alpha_A^0(r,\sigma)+t^3\big(1-f_{\text{inc}}(2t^{-\nu}r-1)\big)\delta_{t^{-1}}^*\alpha_A^{\prime}(r,\sigma)\nonumber\\
&\quad+f_{\text{inc}}(2t^{-\nu}r-1)\sum_{i=1}^d t^{1-\lambda_i}(\Psi_N^*\alpha_N^i(r,\sigma)-\alpha_A^i(r,\sigma))\nonumber\\&\quad+
f_{\text{inc}}(2t^{-\nu}r-1)\alpha_N(r,\sigma).\label{alphaCeq}
\end{align}
Observe that  
$$\alpha_C(t)(r,\sigma)=\left\{\begin{array}{ll}\alpha_{tA}(r,\sigma)=t^3\delta_{t^{-1}}^*\alpha_A(r,\sigma) & r\in(tR,\frac{1}{2}t^{\nu}),\\
\Psi_N^*\alpha_N^0(t)(r,\sigma)+\alpha_N(r,\sigma) & r\in(t^{\nu},\epsilon).\end{array}\right.$$
Therefore, if we let 
$$\tilde{N}(t)=\chi(tK_A)\cup\Upsilon_C(\Gamma_{\alpha_C(t)})\cup \Upsilon_N(\Gamma_{\alpha_N^0(t)|_{K_N}})$$
then, by Proposition \ref{NAnbdprop}, $\tilde{N}(t)$ is a smooth compact 4-fold so that $\tilde{N}(t)\rightarrow N$ as $t\rightarrow 0$
 in the sense of currents in Geometric Measure Theory.
\end{dfn}

\begin{remarks}
\begin{itemize}\item[]
\item[(a)] The precise choice of $\nu$ in \eq{nueq} will remain mysterious until late in the argument but, roughly, we need to choose $\nu$ close to $1$ so that 
the interpolation region $r\in[\frac{1}{2}t^{\nu},t^{\nu}]$, where $\varphi$ potentially has the worst behaviour, is small as $t\rightarrow 0$.
\item[(b)] The factors of $t^{1-\lambda_i}$ which appear in \eq{alphaCeq} are due to the fact that 
$\alpha_A^i$ maps to $t^{1-\lambda_i}\alpha_A^i$ under dilations, as remarked after Definition \ref{matchingdfn}.
\end{itemize}
\end{remarks}

Our aim is to solve the following problem.

\begin{prob}
Deform $\tilde{N}(t)$ to a nearby coassociative 4-fold $N(t)$, so that $N(t)\rightarrow N$ as 
$t\rightarrow 0$ in the sense of currents.
\end{prob}   
Informally, if $|\varphi|_{\tilde{N}(t)}|$  is sufficiently small we hope to make it vanish after a small perturbation.  It is clear from the observations in 
$\S$\ref{obstruct} that the conditions we imposed precisely ensure that obstructions to this procedure can be overcome.

Our problem involves deforming the \emph{non-coassociative} $\tilde{N}(t)$.
As we have seen, infinitesimal deformations of coassociative 4-folds are defined by closed self-dual 2-forms, resulting in 
a deformation theory determined by solutions to an elliptic problem.
To exploit this fact we want to be able to identify normal deformations to
 $\tilde{N}(t)$  with self-dual 2-forms, even though $\tilde{N}(t)$ is \emph{not} coassociative.  We achieve this as in \cite{Lotaydesing} by defining 
the self-dual 2-forms with respect to a different metric on $\tilde{N}(t)$ from the induced one.  This follows from 
an easy modification of \cite[Proposition 2.9 \& Lemma 4.3]{Lotaydesing}, since all one requires is that   
$\|\varphi|_{\tilde{N}(t)}\|_{C^0}$ is smaller than some universal constant, and we can in fact make this norm 
arbitrarily small by choosing $\tau$ sufficiently small since $\tilde{N}(t)\rightarrow N$.

\begin{prop}\label{metricprop}   Define $$\jmath_t:\nu(\tilde{N}(t))\rightarrow\Lambda^2T^*\tilde{N}(t)\quad\text{by}\quad
\jmath_t:v\mapsto(v\lrcorner\varphi)|_{\tilde{N}(t)}.$$  If $\tau$ is sufficiently small, there exists a unique metric $\tilde{g}(t)$
 on $\tilde{N}(t)$ such that $$\Imm\jmath_t=(\Lambda^2_+)_{\tilde{g}(t)}T^*\tilde{N}(t)\quad\text{and}\quad
 *\varphi|_{\tilde{N}(t)}=\vol_{\tilde{g}(t)}.$$  
\end{prop}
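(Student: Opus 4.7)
The plan is to reduce to the analogous statements proved in \cite{Lotaydesing}, namely Proposition 2.9 and Lemma 4.3 there. Those results establish exactly the conclusion we want, under the single hypothesis that $\|\varphi|_{\tilde{N}(t)}\|_{C^0}$ is smaller than a universal constant that depends only on the pointwise linear algebra of $\varphi_0$ on $\R^7$. So the proof reduces to two tasks: (i) verify that this $C^0$-smallness hypothesis holds for our $\tilde{N}(t)$ when $\tau$ is sufficiently small, and (ii) sketch the pointwise algebraic content that produces $\tilde{g}(t)$ from $\jmath_t$.

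For (i), I would estimate $|\varphi|_{\tilde{N}(t)}|$ piece-by-piece using the decomposition in Definition \ref{tildeNdfn}. On $\chi(tK_A)$ we have $\varphi_0|_{tA}=0$, so $|\varphi|_{\tilde N(t)}|$ is controlled by $\|\varphi-\varphi_0\|_{C^0}$ on the shrinking neighbourhood of the singularity, which tends to $0$ as $t\to 0$. On $\Upsilon_N(\Gamma_{\alpha^0_N(t)|_{K_N}})$ we have $\varphi|_{\hat{N}}=0$, and since $\alpha^0_N(t)=\sum t^{1-\lambda_i}\alpha^i_N$ is $O(t^{1-\lambda_d})$ in $C^1$ with $\lambda_d<-1/2<1$, its graph's $\varphi$-values tend to $0$ too. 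On the interpolation region $r\in[\tfrac{1}{2}t^\nu,t^\nu]$, using the formula \eq{phiexpandeq} together with Proposition \ref{alphaAprop}, the matching condition in Definition \ref{matchingdfn}, and \eq{delta0eq}--\eq{nueq}, the graphed self-dual form $\alpha_C(t)$ is itself small and its exterior derivative is controlled by the non-closed remainders, again giving $|\varphi|_{\tilde{N}(t)}|\to 0$. These estimates are crude compared to the refined decay rates we will need in $\S$4, but $C^0$-smallness is all that is required here.

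For (ii), I would recall the following pointwise fact. At each $p\in\tilde{N}(t)$, the image $\Imm(\jmath_t)_p$ is a $3$-dimensional subspace of $\Lambda^2 T_p^*\tilde{N}(t)$, varying continuously with $\varphi|_{\tilde{N}(t)}(p)$; when $\varphi|_{\tilde{N}(t)}(p)=0$ it coincides with the self-dual subspace for the induced metric by Proposition \ref{jmathprop}. Standard linear algebra on an oriented $4$-dimensional vector space shows that the assignment (metric up to conformal scaling) $\leftrightarrow$ (self-dual $3$-plane in $\Lambda^2$) is a diffeomorphism onto an open set; hence if $\Imm(\jmath_t)_p$ is sufficiently close to a genuine self-dual subspace, there is a unique conformal class of metric for which it is the self-dual subspace. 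The requirement $*\varphi|_{\tilde{N}(t)}=\vol_{\tilde{g}(t)}$ is a nonvanishing $4$-form condition that pins down the conformal factor uniquely. Putting these together gives a unique $\tilde{g}(t)$ at each point, and smoothness follows from the smooth dependence on $\varphi|_{\tilde{N}(t)}$.

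The main obstacle is really just the bookkeeping in (i), especially on the interpolation region where $\alpha_C(t)$ interpolates between three different terms. The algebraic step (ii) is essentially standard and is the content imported from \cite{Lotaydesing}; once the universal $C^0$-threshold for $\varphi|_{\tilde{N}(t)}$ is beaten, the conclusion is automatic. No new ideas beyond those already used in Definition \ref{tildeNdfn} and Proposition \ref{alphaAprop} should be needed.
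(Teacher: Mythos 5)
Your proposal takes essentially the same route as the paper: the paper also reduces the statement to an easy modification of \cite[Proposition 2.9 \& Lemma 4.3]{Lotaydesing}, noting that the only hypothesis to verify is $C^0$-smallness of $\varphi|_{\tilde{N}(t)}$, which it asserts follows since $\tilde{N}(t)\rightarrow N$ and $\varphi|_{\hat{N}}\equiv 0$. Your piece-by-piece verification of the smallness on $\chi(tK_A)$, the interpolation region, and the graph over $K_N$, and your sketch of the pointwise linear algebra, are correct elaborations of the same argument rather than a different approach.
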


\begin{note} From now on we shall calculate all quantities on $\tilde{N}(t)$ with respect to the metric $\tilde{g}(t)$ given in Proposition \ref{metricprop}, unless stated otherwise, and we shall use the notation of Definition \ref{tildeNdfn}.
\end{note}

\subsection{Weighted spaces}

We wish to define spaces of forms on $\tilde{N}(t)$ whose behaviour on the piece we have glued into $N$ is controlled, since this is where the geometry 
is degenerating.  We achieve this using Banach spaces with weighted norms, as discussed in detail in \cite{Pacinidesing}.   To define these spaces we need an
 appropriate radius function.

\begin{dfn} Recall Definition \ref{tildeNdfn} and let $R^{\prime},\epsilon^{\prime}$ be constants so that 
$$\tau R<\tau R^{\prime}<\frac{1}{2}\tau^{\nu}<\tau^{\nu}<\epsilon^{\prime}<\epsilon.$$
We define a radius function $\rho_t:\tilde{N}(t)\rightarrow [tR,\epsilon]$ as a smooth map
such that 
$$\rho_t(x)=\left\{\begin{array}{ll} tR & x\in\chi(tK_A),\\ \epsilon & x\in \Upsilon_N(\Gamma_{\alpha_N^0(t)|_{K_N}}),\end{array}\right.$$
 and $\rho_t$ on $\Upsilon_C(\Gamma_{\alpha_C(t)})$ is a strictly increasing function of $r$ such that 
$$\rho_t\big(\Upsilon_C(r,\sigma,\alpha_C(t)(r,\sigma))\big)=r \quad\text{for $r\in[tR^{\prime},\epsilon^{\prime}]$.}$$
In other words, $\rho_t$ is a small perturbation of the piecewise smooth function which is constant on $\chi(tK_A)$ and 
$\Upsilon_N(\Gamma_{\alpha_N^0(t)|_{K_N}})$ and equal to $r$ on $\Upsilon_C(\Gamma_{\alpha_C(t)})$.  The existence of such a function $\rho_t$ on $\tilde{N}(t)$ is clear.  
\end{dfn}

\begin{dfn}\label{wspacedfn} 
Let $k\in\N$, $p\geq 1$ and $\upsilon\in\R$.  

\noindent Define $C^k_{\upsilon,t}\big(\Lambda^mT^*\tilde{N}(t)\big)$ to be the space 
 $C^k\big(\Lambda^mT^*\tilde{N}(t)\big)$ with the norm
$$\|\xi\|_{C^k_{\upsilon,t}}=\sum_{j=0}^k\sup_{\tilde{N}(t)}|\rho_t^{j-\upsilon}\nabla^j\xi|.$$
Let $L^p_{k,\upsilon,t}\big(\Lambda^mT^*\tilde{N}(t)\big)$ be the space 
 $L^p_{k}\big(\Lambda^mT^*\tilde{N}(t)\big)$ with the norm
$$\|\xi\|_{L^p_{k,\upsilon,t}}=\sum_{j=0}^k\left(\int_{\tilde{N}(t)}|\rho_t^{j-\upsilon}\nabla^j\xi|^p\rho_t^{-4}
\d\!\vol_{\tilde{g}(t)}\right)^{\frac{1}{p}}.$$
These weighted spaces are Banach spaces since the norms are equivalent to the usual norms for each fixed $t$.
\end{dfn}

These spaces have the nice feature, unlike their ``unweighted'' counterparts, that they are equivariant under dilations in $t$.  Thus, with respect to these weighted norms, we can 
understand the behaviour of quantities as $t\rightarrow 0$, which is crucial for our analysis.  For a detailed discussion of these issues see \cite{Pacinidesing}.

On CS and AC submanifolds we can define Sobolev, $C^k$ and H\"older spaces of forms, denoted $L^p_{k,\upsilon}$, $C^k_{\upsilon}$ and 
$C^{k,a}_{\upsilon}$, which depend on a weight $\upsilon\in\R$.    
Informally, these Banach spaces consist of forms whose restriction
 to any compact set lies in the usual Sobolev, $C^k$ or 
H\"older space, but which also have controlled rate of decay on the ends 
determined by $\upsilon$.  This is achieved using weighted norms as in Definition \ref{wspacedfn}, replacing $\rho_t$ by an appropriate
 radius function  -- we refer the interested reader to \cite[$\S$3.3]{Lotaydesing} or \cite{Pacinidesing} for details.  We point out that $L^2_{0,-2}=L^2$.

\subsection{Topology}

Clearly we need to know $b^2_+\big(\tilde{N}(t)\big)$ to understand deformations of $\tilde{N}(t)$.  This is a topological invariant which  
 we can determine using the topology of $\hat{N}$ and $A$. 

\begin{thm}\label{b2+thm} Using the notation of Definition \ref{H2+dfn}, \eq{Aseq} and \eq{Nseq}, we have that
\begin{align}
b^2_+\big(\tilde{N}(t)\big)= b^2_+(A)+b^2_+(\hat{N})+\dim(\Imm j^A_2\cap \Imm j^N_2).\label{b2+eq}
\end{align}
\end{thm}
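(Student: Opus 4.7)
The plan is to identify $\tilde{N}(t)$ topologically as a closed $4$-manifold assembled from $A$ and $\hat{N}$, compute $b^2(\tilde{N}(t))$ via Mayer--Vietoris, compute the signature $\sigma(\tilde{N}(t))$ via Novikov additivity, and then invoke the standard identity $b^2_+(X)=\tfrac{1}{2}(b^2(X)+\sigma(X))$ for a closed oriented $4$-manifold $X$.

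First I would observe that, by Definition \ref{tildeNdfn}, $\tilde{N}(t)$ is diffeomorphic (independently of $t$) to $\bar{A}\cup_\Sigma\bar{N}$, where $\bar{A}$ is the compactification of $A$ obtained by adjoining $\Sigma$ at the conical end and $\bar{N}$ is the compactification of $\hat{N}$ obtained by adjoining $\Sigma$ at the singularity; the transition region $(tR,\epsilon)\times\Sigma$ is merely a collar. Both $\bar{A}$ and $\bar{N}$ are compact oriented $4$-manifolds with boundary $\Sigma$, carrying opposite boundary orientations. Since $\bar{A}\simeq A$ and $\bar{N}\simeq\hat{N}$ as homotopy equivalences, we have $\mathcal{J}(A)=\Imm(H^2(\bar{A},\Sigma)\to H^2(\bar{A}))=\Ker j^A_2$, of dimension $b^2_+(A)+b^2_-(A)$, with the analogous statement for $N$.

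Next I would apply Mayer--Vietoris to $\tilde{N}(t)=\bar{A}\cup_\Sigma\bar{N}$. From
\begin{equation*}
H^1(\bar{A})\oplus H^1(\bar{N})\to H^1(\Sigma)\stackrel{\delta}{\to} H^2(\tilde{N}(t))\to H^2(\bar{A})\oplus H^2(\bar{N})\stackrel{\beta}{\to} H^2(\Sigma)
\end{equation*}
with $\beta(a,b)=j^A_2 a-j^N_2 b$, a short count using $\dim\Ker j^A_2=b^2_+(A)+b^2_-(A)$ and the analogue for $N$ yields
\begin{equation*}
b^2(\tilde{N}(t))=b^2_+(A)+b^2_-(A)+b^2_+(\hat{N})+b^2_-(\hat{N})+\dim(\Imm j^A_2\cap\Imm j^N_2)+\dim\Imm\delta.
\end{equation*}
Then Novikov additivity across the common boundary $\Sigma$, together with the fact that the signature of a compact $4$-manifold with boundary is the signature of the intersection form on $\mathcal{J}(A)$ (respectively $\mathcal{J}(\hat{N})$), gives
\begin{equation*}
\sigma(\tilde{N}(t))=(b^2_+(A)-b^2_-(A))+(b^2_+(\hat{N})-b^2_-(\hat{N})),
\end{equation*}
so that $b^2_+(\tilde{N}(t))=b^2_+(A)+b^2_+(\hat{N})+\tfrac{1}{2}\bigl(\dim(\Imm j^A_2\cap\Imm j^N_2)+\dim\Imm\delta\bigr)$.

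The main obstacle is the remaining linear-algebraic identity $\dim\Imm\delta=\dim(\Imm j^A_2\cap\Imm j^N_2)$. I would derive this from Lefschetz duality: the connecting map $H^1(\Sigma)\to H^2(\bar{A},\Sigma)\cong H^2(\bar{A})^*$ is the transpose of $j^A_2$ under the Poincar\'e duality pairing $H^1(\Sigma)\otimes H^2(\Sigma)\to\mathbb{R}$ on the closed $3$-manifold $\Sigma$, so $\Imm j^A_1\subseteq H^1(\Sigma)$ is the annihilator of $\Imm j^A_2\subseteq H^2(\Sigma)$; in particular $\dim\Imm j^A_1+\dim\Imm j^A_2=b^1(\Sigma)$, and analogously for $N$. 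Applying $(V+W)^\perp=V^\perp\cap W^\perp$ to $V=\Imm j^A_2$, $W=\Imm j^N_2$ gives
\begin{equation*}
\dim(\Imm j^A_1\cap\Imm j^N_1)=b^1(\Sigma)-\dim\Imm j^A_2-\dim\Imm j^N_2+\dim(\Imm j^A_2\cap\Imm j^N_2),
\end{equation*}
whereas Mayer--Vietoris also gives $\dim\Imm\delta=b^1(\Sigma)-\dim(\Imm j^A_1+\Imm j^N_1)$; substituting the first into the second produces the required equality and hence \eq{b2+eq}.
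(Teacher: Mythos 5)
Your proof is correct, and it takes a genuinely different route from the paper in its final step. Both proofs begin by decomposing $\tilde{N}(t)$ topologically into pieces diffeomorphic to $A$ and $\hat{N}$ overlapping on a collar of $\Sigma$, and both run the same Mayer--Vietoris sequence to compute $b^2(\tilde{N}(t))$. Where the two diverge is in passing from $b^2$ to $b^2_+$: the paper argues that each class in $\Imm j^A_2$ and $\Imm j^N_2$ lifts to a closed self-dual \emph{or} anti-self-dual $2$-form on $A$ or $\hat{N}$ (quoting Proposition~\ref{alphaSigmaprop} for $\hat{N}$ and the analogous AC statement for $A$), and concludes that the $2\dim(\Imm j^A_2\cap\Imm j^N_2)$ extra Mayer--Vietoris classes split evenly between $b^2_+$ and $b^2_-$. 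You instead compute $\sigma(\tilde{N}(t))$ directly by Novikov additivity across $\Sigma$, identify the signature of each piece with $b^2_+-b^2_-$ in the sense of Definition~\ref{H2+dfn} (using that the intersection form is nondegenerate on $\mathcal{J}(\cdot)$), and then apply the closed-$4$-manifold identity $b^2_+=\tfrac{1}{2}(b^2+\sigma)$. This makes the whole argument purely topological, whereas the paper's closing step leans on the analytic lifting statement; it is also pleasingly consistent with the paper's own remark that its proof ``essentially follows the argument for proving the Novikov additivity theorem'' --- you simply use Novikov additivity as a black box rather than re-deriving it. Your Lefschetz-duality argument that $\dim\Imm\delta=\dim(\Imm j^A_2\cap\Imm j^N_2)$ is a spelled-out version of what the paper asserts more tersely (``$\Imm\tilde{\partial}_1$ is dual to the intersection of $\Imm j^A_2$ and $\Imm j^N_2$''), and is correct.
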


\begin{proof} We can clearly choose a pair of open subsets of $\tilde{N}(t)$, diffeomorphic to $A$ and 
$\hat{N}$, which cover $\tilde{N}(t)$ and whose intersection is diffeomorphic to $C$.  By Mayer--Vietoris we then have the following exact sequence:
\begin{equation}\label{seq}
\cdots\!\longrightarrow H^m\big(\tilde{N}(t)\big)
\,{\buildrel \tilde{\iota}_m\over\longrightarrow}\,
H^m(A)\oplus H^m(\hat{N})\,{\buildrel \tilde{j}_m\over\longrightarrow}\,
H^m(\Sigma)\,{\buildrel
\tilde{\partial}_m\over\longrightarrow}\,
H^{m+1}\big(\tilde{N}(t)\big)\longrightarrow\!\cdots.
\end{equation}
Since \eq{seq} is exact we have that 
\begin{equation}\label{b2eq1}
b^2\big(\tilde{N}(t)\big)=\dim\Ker \tilde{\iota}_2+\dim\Imm\tilde{\iota}_2=\dim\Imm\tilde{\partial}_1+\dim\Ker\tilde{j}_2.
\end{equation}

We first calculate $\dim\Imm\tilde{\partial}_1$.  Using \eq{Aseq} we see that
$\dim\Imm\partial_1^A=\dim\Imm j^A_2$ 
and the same result holds for $N$ by \eq{Nseq}.
 The fact that these spaces have the same dimension is a consequence of Poincar\'e duality, which further allows us to construct 
 an isomorphism between them.  
Now $\tilde{\partial}_1$ is defined so that
 $\tilde{\partial}_1[\alpha_{\Sigma}]$ can be simultaneously viewed as $\partial_1^A[\alpha_{\Sigma}]\in H^1_{\text{cs}}(A)$ and 
 $\partial_1^N[\alpha_{\Sigma}]\in H^1_{\text{cs}}(\hat{N})$.  Hence, $\Imm\tilde{\partial}_1$ is dual to 
the intersection of $\Imm j^A_2$ and $\Imm j^N_2$.   We conclude that 
\begin{align}\label{dimeq1}
\dim\Imm\tilde{\partial}_1= \dim(\Imm j^A_2\cap\Imm j^N_2).
\end{align}

We now determine $\dim\Ker\tilde{j}_2$.  By definition, $\tilde{j}_2=j^A_2-j^N_2$, so may also calculate, recalling Definition \ref{H2+dfn}, 
\begin{align}
\dim\Ker\tilde{j}_2&=\dim\Ker j^A_2+\dim\Ker j^N_2+\dim(\Imm j^A_2\cap\Imm j^N_2)\nonumber\\
&=\dim\mathcal{J}(A)+\dim\mathcal{J}(\hat{N})+\dim(\Imm j^A_2\cap\Imm j^N_2).\label{dimeq2}
\end{align}

We deduce from \eq{b2eq1}, \eq{dimeq1} and \eq{dimeq2} that 
\begin{equation*}
b^2\big(\tilde{N}(t)\big)=\dim\mathcal{J}(A)+\dim\mathcal{J}(\hat{N})+2\dim(\Imm j^A_2\cap\Imm j^N_2).
\end{equation*}

There is no obstruction to elements of $\Imm j^N_2\subseteq H^2(\Sigma)$ lifting to closed 2-forms on $\hat{N}$ which are either 
self-dual or anti-self-dual by Proposition \ref{alphaSigmaprop}, and a similar result holds on $A$ (as noted after Theorem \ref{ACdefthm}).  We conclude that 
\begin{equation*}
b^2_{\pm}\big(\tilde{N}(t)\big)= b^2_{\pm}(A)+b^2_{\pm}(\hat{N})+\dim(\Imm j^A_2\cap\Imm j^N_2)
\end{equation*}
and deduce \eq{b2+eq}.
\end{proof}

\begin{remarks}
From the proof of Theorem \ref{b2+thm} we deduce a special case of the Novikov additivity theorem, namely that
$$b^2_+\big(\tilde{N}(t)\big)-b^2_-\big(\tilde{N}(t)\big)=b^2_+(A)-b^2_-(A)+b^2_+(\hat{N})-b^2_-(\hat{N}).$$
This is unsurprising since our proof essentially follows the argument  in \cite{AS3} for proving the Novikov additivity theorem.
\end{remarks}

\subsection{Stability}\label{stabsec}

Theorem \ref{ACdefthm} allows us to conclude that infinitesimal deformations of $C$ which 
are homogeneous of rate $(-2,0)$ always extend to genuine deformations of $A$, so the deformation 
theory of $A$ is unobstructed.   This unobstructedness follows from the fact that, for $\lambda_+\geq\lambda$ such that 
$\lambda_+\in(-2,0)\setminus\mathcal{D}$, 
$$\d\big( L^2_{4,\lambda_+}(\Lambda^2T^*A)\big)= \d\big(L^2_{4,\lambda_+}(\Lambda^2_+T^*A)\big).$$

  In contrast on $\hat{N}$ we find 
that the images of $\d$ on 2-forms and self-dual 2-forms in $L^2_{4,\mu}$ can differ.
  
\begin{dfn}\label{obstructdfn}
By the work in \cite[$\S$6]{LotayCS} we have that if $\mu_+\notin\mathcal{D}$ then there exists a finite-dimensional subspace 
 $\mathcal{O}(N,\mu_+)$ of $L^2_{3,\mu_+-1}(\Lambda^3T^*\hat{N})$ such that 
$$\d\big(L^2_{4,\mu_+}(\Lambda^2T^*\hat{N})\big)=\d\big(L^2_{4,\mu_+}(\Lambda^2_+T^*\hat{N})\big)\oplus\mathcal{O}(N,\mu_+).
$$
We call $\mathcal{O}(N,\mu)$ the \emph{obstruction space} since one of the main results in \cite{LotayCS} states that if $\mathcal{O}(N,\mu)=\{0\}$ 
then $N$ has a smooth moduli space of deformations as a CS coassociative 4-fold; that is, its deformation theory is \emph{unobstructed}.
\end{dfn}  

We shall now show that the obstruction space corresponds to closed self-dual 2-forms on $C$ which do \emph{not} lift to $\hat{N}$.  
From our matching condition in Definition \ref{matchingdfn}, we see that these are exactly the sort of obstructions we need to overcome in order to 
solve our gluing problem.  This allows us to draw a direct link between obstructions to the smoothing of $N$ and obstructions to deformations of $N$.  

We begin with the following result from \cite{LotayCS}.

\begin{prop}\label{kercsprop} Recall Definition \ref{Ddfn},   
let $\mu_0$ be the least element of $\big((-2,0)\cap\mathcal{D}\big)\cup\{0\}$ and let $\mu_-\in(-2,\mu_0)$.  
If $\mu_+\in(-2,0)\setminus\mathcal{D}$ with $\mu_+>\mu_-$ then the dimension of the kernel of $\d$ in $L^2_{4,\mu_+}(\Lambda^2_+T^*\hat{N})$ is
$$b^2_+(\hat{N})+\dim\Ker(\d^*_++\d)_{\mu_+}-\dim\Ker(\d^*_++\d)_{\mu_-}-
\!\!\!\!\!\!\sum_{\upsilon\in(-2,\mu_+)}\!\!\!\!\!\!d_{\mathcal{D}}(\upsilon),$$
where $(\d^*_++\d)_{\upsilon}$ acts on $L^2_{4,-3-\upsilon}(\Lambda^3T^*\hat{N})$.
\end{prop}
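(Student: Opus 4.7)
The plan is to realise $K(\mu_+) := \dim\Ker(\d\colon L^2_{4,\mu_+}(\Lambda^2_+T^*\hat N) \to L^2_{3,\mu_+-1}(\Lambda^3 T^*\hat N))$ via the Fredholm theory of an elliptic operator on weighted Sobolev spaces and compute the variation with $\mu_+$ via the Lockhart--McOwen change-of-index formula \cite{LockhartMcOwen}. The key reduction is that on an oriented Riemannian 4-manifold, any self-dual 2-form $\alpha$ satisfies $*\alpha = \alpha$, so $\d^*\alpha = -*\d\alpha$; hence $\d\alpha = 0$ if and only if $\alpha$ is harmonic. This makes $K(\mu_+)$ accessible through the elliptic operator $P_\upsilon := (\d^*_+ + \d)$ from the statement, whose formal $L^2$-adjoint $P^*_\upsilon = \d + \d^*$ acts on $L^2_{4,\upsilon}(\Lambda^2_+ \oplus \Lambda^4)$; by integration by parts (valid in the relevant weight range), $\Ker P^*_\upsilon$ splits as $\Ker(\d|_{\Lambda^2_+}) \oplus \Ker(\d^*|_{\Lambda^4})$, with the $\Lambda^4$-summand being $1$-dimensional and independent of $\upsilon$ throughout $(\mu_-, \mu_+) \subset (-2, 0)$ (spanned by a constant multiple of the volume form on $\hat N$), so it will cancel in the forthcoming subtraction.

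Next, I would identify the critical weights of $P_\upsilon$ with the set $\mathcal{D}$ of Definition \ref{Ddfn}: pulling back to the cone $C = \R^+ \times \Sigma$, homogeneous kernel elements of $P$ of rate $\upsilon$ correspond, via Hodge duality and the splitting with respect to $\d r$, precisely to the space $D(\upsilon)$ of solutions of \eq{Deq}. The Lockhart--McOwen change-of-index formula then expresses $\ind P_{\mu_+} - \ind P_{\mu_-}$ as the sum $\sum_{\upsilon \in (\mu_-, \mu_+)} d_{\mathcal{D}}(\upsilon) = \sum_{\upsilon \in (-2, \mu_+)} d_{\mathcal{D}}(\upsilon)$, the second equality because $(-2, \mu_-) \cap \mathcal{D} = \emptyset$ by the choice of $\mu_0$.

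Combining $\ind P_\upsilon = \dim\Ker P_\upsilon - \dim\Ker P^*_\upsilon$ with the identification $\dim\Ker P^*_\upsilon = K(\upsilon) + 1$ (the $+1$ cancelling between $\mu_-$ and $\mu_+$) and the index-change identity from the previous paragraph gives $K(\mu_+) - K(\mu_-) = \dim\Ker P_{\mu_+} - \dim\Ker P_{\mu_-} - \sum_{\upsilon \in (-2,\mu_+)} d_{\mathcal{D}}(\upsilon)$. It remains to identify $K(\mu_-) = b^2_+(\hat N)$: for $\mu_-$ in the gap $(-2, \mu_0)$, any closed self-dual 2-form in $L^2_{4,\mu_-}$ actually lies in $L^2$ (by a decay argument using the absence of critical weights in $(-2, \mu_-)$ combined with elliptic regularity, in the spirit of Proposition \ref{alphaSigmaprop}), so its dimension is $b^2_+(\hat N)$ by Definition \ref{H2+dfn}. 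Rearrangement yields the stated formula. The main obstacle is the careful bookkeeping of Lockhart--McOwen duality: matching the weight $-3-\upsilon$ on $P$ with the dual weight $\upsilon$ on $P^*$ (as determined by $P$ being first-order and $\hat N$ being four-dimensional), verifying that the integration-by-parts splitting of $\Ker P^*_\upsilon$ remains valid throughout the weight range under consideration, and fixing the sign of the index-change contribution so that it matches the stated formula.
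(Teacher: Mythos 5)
The paper does not actually prove this proposition --- it is imported verbatim from \cite{LotayCS} --- so there is no in-paper argument to match against; but the sentence immediately following the statement (identifying $\Ker(\d^*_++\d)_{\upsilon}$ with the cokernel of the map \eq{lindefmap}) confirms that your framework is exactly the intended one, and your bookkeeping is correct. In particular the dual weight $-3-\upsilon$ is right (the cokernel of a map into $L^2_{3,\upsilon-1}$ pairs at weight $-4-(\upsilon-1)$), the index jumps of $(\d_++\d^*)_\upsilon$ across $(-2,0)$ are exactly the $d_{\mathcal{D}}(\upsilon)$ since homogeneous coclosed $4$-forms on $C$ only occur at rate $0$, the identification $K(\mu_-)=b^2_+(\hat{N})$ follows from the absence of critical rates in $(-2,\mu_-]$ exactly as you say, and the resulting arithmetic reproduces the stated formula.

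The one step you flag that genuinely needs more than you give it is the splitting of $\Ker\big((\d_++\d^*)_\upsilon\big)$ into $\Ker(\d|_{\Lambda^2_+})\oplus\Ker(\d^*|_{\Lambda^4})$. The naive integration by parts produces a boundary term of size $O(r^{2\upsilon+2})$ near the singularity, which only vanishes for $\upsilon>-1$; since $\mu_-$ may well lie in $(-2,-1]$ (e.g.\ when $\mu_0=-\frac{3}{2}$ as for the Lawson--Osserman cone), this argument does not cover the whole weight range. The repair is to argue directly on the $\Lambda^4$ summand: if $\d\alpha+\d^*\eta=0$ then $\d\d^*\eta=0$, so $f=*\eta$ is a harmonic function with $f=O(r^{\upsilon})$; as the Laplacian on functions has no critical rates in $(-2,0)$ on a cone, $f$ is bounded, hence extends harmonically across $z$ to the compact $N$ and is constant. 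Thus $\d^*\eta=0$ and $\d\alpha=0$ separately, the $\Lambda^4$ kernel is spanned by the volume form (which lies in $L^2_{4,\upsilon}$ precisely for $\upsilon<0$), and your cancellation of the $+1$ between $\mu_-$ and $\mu_+$ goes through. With that substitution your proof is complete.
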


The reason for the appearance of $\Ker(\d^*_++\d)_{\upsilon}$ is that it is isomorphic to the cokernel of the map 
\begin{equation}\label{lindefmap}
(\d_++\d^*)_{\upsilon}
:L^2_{4,\upsilon}(\Lambda^2_+T^*\hat{N}\oplus\Lambda^4T^*\hat{N})\rightarrow L^2_{3,\upsilon-1}(\Lambda^3T^*\hat{N}).
\end{equation}
We need to relate $\Ker(\d^*_++\d)_{\upsilon}$ to the space of closed and coclosed 3-forms, since $\mathcal{O}(N,\mu_+)$ is isomorphic 
to the subspace of $\Ker(\d^*_++\d)_{\mu_+}$ which is orthogonal to these forms by the work in \cite{LotayCS}.  We begin with the following
observation.

\begin{lem}\label{closedlem}
If $\gamma\in L^2_{4,-1}(\Lambda^3T^*\hat{N})$ and $\d^*_+\gamma=0$ then $\d^*\gamma=0$.
\end{lem}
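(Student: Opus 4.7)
The plan is to set $\eta := \d^*\gamma$ and prove $\eta = 0$, by first verifying that $\eta$ is a harmonic anti-self-dual $L^2$ 2-form, and then deducing $\|\eta\|_{L^2}^2 = 0$ from an integration-by-parts argument with a cut-off near the singularity.

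The hypothesis $\d^*_+\gamma = 0$ says exactly that the self-dual part of $\eta = \d^*\gamma$ vanishes, so $\eta$ is anti-self-dual: $*\eta = -\eta$. Trivially $\d^*\eta = \d^*\d^*\gamma = 0$. On a $4$-manifold one has $\d^* = -{*}\d{*}$ on $2$-forms, and combining this with $*\eta = -\eta$ yields $\d^*\eta = -{*}\d{*}\eta = {*}\d\eta$, so $\d\eta = 0$. Hence $\eta$ is harmonic. Since $\gamma \in L^2_{4,-1}$ gives $\eta = \d^*\gamma \in L^2_{3,-2}$ and $L^2_{0,-2} = L^2$, the form $\eta$ lies in ordinary $L^2$.

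For the main step I would choose cut-offs $\phi_\epsilon:\hat{N}\to[0,1]$ with $\phi_\epsilon = 0$ where $\rho \le \epsilon$, $\phi_\epsilon = 1$ where $\rho \ge 2\epsilon$, and $|\d\phi_\epsilon|\le C/\epsilon$. Since $\phi_\epsilon\eta$ has compact support in $\hat{N}$ and $\d\eta = 0$,
\[
\int_{\hat{N}}\phi_\epsilon|\eta|^2\,\d\vol = \langle\phi_\epsilon\eta,\d^*\gamma\rangle = \langle\d(\phi_\epsilon\eta),\gamma\rangle = \langle\d\phi_\epsilon\w\eta,\gamma\rangle.
\]
The left-hand side tends to $\|\eta\|_{L^2}^2$ by dominated convergence. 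The right-hand side is supported in the annulus $A_\epsilon=\{\epsilon\le\rho\le 2\epsilon\}$, and Cauchy--Schwarz gives
\[
|\langle\d\phi_\epsilon\w\eta,\gamma\rangle| \le (C/\epsilon)\|\eta\|_{L^2(A_\epsilon)}\|\gamma\|_{L^2(A_\epsilon)}.
\]
By absolute continuity of $\int|\eta|^2$ we have $\|\eta\|_{L^2(A_\epsilon)}\to 0$, while $\rho\le 2\epsilon$ on $A_\epsilon$ gives
\[
\int_{A_\epsilon}|\gamma|^2\,\d\vol \le 4\epsilon^2\int_{A_\epsilon}|\gamma|^2\rho^{-2}\,\d\vol \le 4\epsilon^2\|\gamma\|_{L^2_{0,-1}}^2,
\]
so $\|\gamma\|_{L^2(A_\epsilon)} = O(\epsilon)$. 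The $1/\epsilon$ and $\epsilon$ cancel, leaving a factor that goes to zero; hence the right-hand side vanishes in the limit, giving $\|\eta\|_{L^2}^2 = 0$ and therefore $\eta = 0$.

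No single step is especially delicate; the only care required is the weighted-norm bookkeeping in the final cut-off estimate. The crucial point is that $\gamma$ lies in the weighted space $L^2_{4,-1}$ rather than merely in unweighted $L^2$, so that its mass on a thin annulus of width $\epsilon$ around the singularity is of order $O(\epsilon)$, exactly absorbing the $1/\epsilon$ blow-up of $|\d\phi_\epsilon|$.
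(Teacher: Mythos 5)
Your proof is correct and follows essentially the same route as the paper: both arguments exploit the anti-self-duality of $\d^*\gamma$ to rewrite $\|\d^*\gamma\|_{L^2}^2$ as an integration by parts whose boundary contribution at the singularity vanishes because of the decay encoded in $\gamma\in L^2_{4,-1}$. Your cut-off computation simply makes explicit the step the paper summarises as ``the decay properties of $\gamma$ ensure the integration by parts is valid'', with the $O(\epsilon)$ mass of $\gamma$ on the annulus cancelling the $1/\epsilon$ growth of $|\d\phi_\epsilon|$.
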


\begin{proof}
Since $\d^*\gamma\in L^2_{3,-2}\hookrightarrow L^2$ we can calculate 
$$\|\d^*\gamma\|_{L^2}=-\int_{\hat{N}}\d^*\gamma\w \d^*\gamma=\int_{\hat{N}}\d(*\gamma\w\d*\gamma)=0,$$
using the fact that $\d^*\gamma$ is anti-self-dual and the decay properties of $\gamma$ to ensure the integration by parts is valid.
\end{proof}

It follows from Lemma \ref{closedlem} that, for $\upsilon\leq -2$, $\Ker(\d^*_++\d)_{\upsilon}$ is equal to
\begin{equation}\label{H3eq}
\mathcal{H}^3_{\upsilon}(\hat{N})=\{\gamma\in L^2_{4,-3-\upsilon}(\Lambda^3T^*\hat{N})\,:\,\d\gamma=\d^*\gamma=0\}.
\end{equation}
Thus, the obstruction space $\mathcal{O}(N,\upsilon)=\{0\}$ if $\upsilon\leq -2$.  However, we need to calculate $\dim\mathcal{O}(N,\mu_+)$ for $\mu_+\in(-2,0)$ so 
we need to compare $\Ker(\d^*_++\d)_{\mu_+}$ and $\mathcal{H}^3_{\mu_+}$ for these rates.  This is made easier by the following fact.

\begin{prop}\label{cccsprop}
 The space $\mathcal{H}^3_{\mu_+}(\hat{N})$ in \eq{H3eq} 
is the same for all $\mu_+\in(-2,0)$.
\end{prop}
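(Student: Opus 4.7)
The plan is to combine the Lockhart--McOwen Fredholm theory for elliptic operators on conically singular manifolds with an analysis of the critical rates of the operator $\d\oplus\d^*$ on 3-forms over the asymptotic cone $C$. Since $\mathcal{H}^3_{\mu_+}(\hat{N})$ is precisely the kernel of this operator in $L^2_{4,-3-\mu_+}$, the standard theory (used throughout \cite{LotayCS}, following \cite{LockhartMcOwen}) implies that its dimension is locally constant in $\mu_+$ away from the discrete set of critical rates of the cone operator. Thus, to prove the proposition it suffices to show that no critical rate lies in the weight interval $(-3,-1)$ corresponding to $\mu_+\in(-2,0)$.

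The trivial direction $\mathcal{H}^3_{\mu_+'}\subseteq\mathcal{H}^3_{\mu_+}$ for $\mu_+\leq\mu_+'$ is immediate, since smaller weights allow more singular behaviour near the conical point. The content of the proposition is the reverse inclusion, a decay-improvement statement for closed and coclosed 3-forms.

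I would reduce to the analogous statement for 1-forms via the Hodge star on the 4-manifold $\hat{N}$: this is a pointwise isometry $\gamma\mapsto *\gamma$, and a short calculation shows that $\d\gamma=0$ is equivalent to $\d^*(*\gamma)=0$ and $\d^*\gamma=0$ is equivalent to $\d(*\gamma)=0$. Hence $*$ gives a weight-preserving bijection between $\mathcal{H}^3_{\mu_+}(\hat{N})$ and its 1-form analogue, and the proposition reduces to the corresponding statement for closed and coclosed 1-forms.

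For the critical rate calculation on the cone, a homogeneous 1-form of rate $s$ on $C=\R^+\times\Sigma$ takes the form $\beta=r^s a(\sigma)\,\d r+r^{s+1}c(\sigma)$, with $a\in C^\infty(\Sigma)$ and $c$ a 1-form on $\Sigma$ lifted to $C$; imposing $\d\beta=0$ and $\d^*\beta=0$ reduces, in a manner parallel to the analysis producing the set $\mathcal{D}$ in Definition \ref{Ddfn}, to an algebraic system on $(a,c)$ involving the Hodge Laplacian on $\Sigma$. The critical values of $s$ are those for which this system admits a non-trivial solution, and the main obstacle is to verify that none lies strictly inside $(-3,-1)$: the endpoint $s=-1$ arises from lifting harmonic 1-forms on $\Sigma$, the endpoint $s=-3$ arises from the exact form $\d(r^{-2})$ attached to the constant eigenfunction on $\Sigma$, and every other admissible rate has the form $s=-1\pm\sqrt{1+\mu}$ (or an analogous expression for coupled modes) with $\mu>0$ a non-trivial eigenvalue of a Laplace-type operator on $\Sigma$, forcing $|s+1|>1$ and placing $s$ outside $[-3,-1]$. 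Combined with the piecewise constancy of $\dim\mathcal{H}^3_{\mu_+}$ from Lockhart--McOwen, this yields the proposition.
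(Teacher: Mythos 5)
Your overall strategy — invoking Lockhart--McOwen to reduce the claim to the absence of critical rates for $\d\oplus\d^*$ on the cone in the relevant weight range, and then identifying critical rates via an eigenvalue problem on $\Sigma$ — is exactly the approach the paper takes. The preliminary Hodge-dualization to closed and coclosed $1$-forms is a legitimate cosmetic variation: $*$ is a weight-preserving isomorphism $L^2_{k,\upsilon}\Lambda^3\to L^2_{k,\upsilon}\Lambda^1$, so nothing is lost. (The paper instead works directly with $3$-forms $\gamma_\infty=r^{-3-\upsilon}(r^3\beta_\Sigma+r^2\d r\w\alpha_\Sigma)$ and derives $\Delta_\Sigma\beta_\Sigma=\upsilon(\upsilon+2)\beta_\Sigma$, which has no nontrivial solutions for $\upsilon\in(-2,0)$ since $\upsilon(\upsilon+2)<0$ there.)

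However, your explicit rate formula contains an error, and it occurs precisely at the step that is supposed to exclude the interval. For $\beta=r^s a(\sigma)\,\d r+r^{s+1}c(\sigma)$, the conditions $\d\beta=0$ and $\d^*\beta=0$ give $(s+1)c=\d_\Sigma a$, $\d_\Sigma c=0$ and $(s+3)a=\d^*_\Sigma c$; eliminating yields $\Delta_\Sigma a=(s+1)(s+3)a$ (equivalently $\Delta_\Sigma c=(s+1)(s+3)c$). Writing $(s+1)(s+3)=\mu$ gives $(s+2)^2=1+\mu$, hence $s=-2\pm\sqrt{1+\mu}$ and $|s+2|\geq 1$, which places $s\geq-1$ or $s\leq-3$ as required. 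The formula you wrote, $s=-1\pm\sqrt{1+\mu}$, corresponds to $s(s+2)=\mu$ — this is the paper's quadratic for $\upsilon$, not for $s=-3-\upsilon$ — and the conclusion $|s+1|>1$ it produces only gives $s>0$ or $s<-2$, which does \emph{not} exclude $(-3,-2)$. As written, the proof does not close at the crucial point. A smaller slip: with the weight convention of Definition \ref{wspacedfn}, $\mathcal{H}^3_{\mu_+}(\hat{N})$ is nondecreasing in $\mu_+$, so the trivial inclusion is $\mathcal{H}^3_{\mu_+}\subseteq\mathcal{H}^3_{\mu_+'}$ for $\mu_+\leq\mu_+'$, and the decay-improvement direction is the one you labelled trivial.
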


\begin{proof}
By the work in \cite{LockhartMcOwen}, changes in $\mathcal{H}^3_{\mu_+}(\hat{N})$ are governed by homogeneous closed and 
coclosed 3-forms $\gamma_{\infty}$ on $C$ of rate $-3-\upsilon$ for $\upsilon\in(-2,0)$.  We write 
\begin{equation}\label{cceq0}
\gamma_{\infty}=r^{-3-\upsilon}(r^3\beta_{\Sigma}+r^{2}\d r\w\alpha_{\Sigma})
\end{equation}
for forms $\alpha_{\Sigma},\beta_\Sigma$ on $\Sigma$.  The condition that $\d\gamma_{\infty}=\d^*\gamma_{\infty}=0$ is equivalent to
\begin{equation}\label{cceq1}
\d\alpha_{\Sigma}=-\upsilon\beta_{\Sigma},\quad\d\!*_{\Sigma}\!\alpha_{\Sigma}=0\quad\text{and}\quad \d\!*_\Sigma\!\beta_{\Sigma}=(\upsilon+2)*_\Sigma\!\alpha_\Sigma.
\end{equation}
We deduce that 
\begin{equation*}
\Delta_{\Sigma}\beta_{\Sigma}=\upsilon(\upsilon+2)\beta_{\Sigma}.
\end{equation*}
Thus $\beta_{\Sigma}=0$ if $\upsilon\in(-2,0)$ and hence by \eq{cceq1} $\alpha_{\Sigma}=0$ as well.  
\end{proof}

As previously mentioned, the work in \cite{LotayCS} shows that 
$$\dim\Ker(\d^*_++\d)_{\upsilon}=\dim\mathcal{H}^3_{\upsilon}(\hat{N})+\dim\mathcal{O}(N,\upsilon).$$
Using the notation of Proposition \ref{kercsprop}, applying Proposition \ref{cccsprop} gives:
$$\dim\Ker(\d^*_++\d)_{\mu_+}-\dim\Ker(\d^*_++\d)_{\mu_-}=\dim\mathcal{O}(N,\mu_+)-\dim\mathcal{O}(N,\mu_-).$$
As we saw, $\mathcal{O}(N,\upsilon)=\{0\}$ for $\upsilon\leq -2$.  Since $\mathcal{O}(N,\mu_-)=\mathcal{O}(N,-2+\varepsilon)$ for arbitrarily small $\varepsilon>0$, to finish we need to calculate how the obstruction space changes as the rate crosses $-2$.  The next lemma states that it does not change.

\begin{lem}\label{mu-lem} Using the notation of Proposition \ref{kercsprop} and \eq{H3eq}, 
$$\Ker(\d^*_++\d)_{\mu_-}=\mathcal{H}^3_{\mu_-}(\hat{N})\quad\text{or, equivalently,}\quad\mathcal{O}(N,\mu_-)=\{0\}.$$
\end{lem}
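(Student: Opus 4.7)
The strategy is to show that every $\gamma\in\Ker(\d^*_++\d)_{\mu_-}$ is actually closed and coclosed, i.e.\ lies in $\mathcal{H}^3_{\mu_-}(\hat N)$. Given such $\gamma$, the assumption $\d^*_+\gamma=0$ means $\d^*\gamma$ is anti-self-dual, and combining this with $\d^2=0$ and the identity $*\d^*\gamma=-\d*\gamma$ (valid on 3-forms on an oriented 4-manifold) gives $\d(\d^*\gamma)=0$. Hence $\d^*\gamma$ is a closed, anti-self-dual 2-form on $\hat N$ of rate $-4-\mu_-$.

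The equivalent goal is $\mathcal{O}(N,\mu_-)=\{0\}$. Using $\dim\Ker(\d^*_++\d)_\upsilon=\dim\mathcal{H}^3_\upsilon(\hat N)+\dim\mathcal{O}(N,\upsilon)$, Proposition \ref{cccsprop} (which makes $\dim\mathcal{H}^3_\upsilon$ constant on $(-2,0)$), and the preceding observation that $\mathcal{O}(N,\upsilon)=0$ for $\upsilon\leq -2$, it suffices to show that the jumps of $\dim\Ker(\d^*_++\d)_\upsilon$ and $\dim\mathcal{H}^3_\upsilon$ at $\upsilon=-2$ coincide, and that no other critical rate for $\dim\Ker(\d^*_++\d)_\upsilon$ lies in $(-2,\mu_0)$. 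The latter is a direct consequence of the reasoning in the first paragraph: for $\upsilon\in(-2,\mu_0)\setminus\{-2\}$, any homogeneous $\gamma_\infty$ on $C$ with $\d\gamma_\infty=\d^*_+\gamma_\infty=0$ has $\d^*\gamma_\infty$ a closed anti-self-dual 2-form of rate $-4-\upsilon$; such forms are in bijection with $D(\upsilon)$ via the reflection identifying closed self-dual and closed anti-self-dual eigenforms on $\Sigma$ (a closed ASD 2-form of rate $s$ corresponds to a closed SD 2-form of rate $-s-4$); since $\upsilon\notin\mathcal{D}$ this space vanishes, so $\gamma_\infty$ is closed and coclosed, hence zero by Proposition \ref{cccsprop}.

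The key computation is then at $\upsilon=-2$: every homogeneous 3-form $\gamma_\infty$ of rate $-1$ satisfying $\d\gamma_\infty=0$ and $\d^*_+\gamma_\infty=0$ is closed and coclosed. Writing $\gamma_\infty=r^2\beta_\Sigma+r\,\d r\wedge\alpha_\Sigma$ with $\beta_\Sigma$ a 3-form and $\alpha_\Sigma$ a 2-form on $\Sigma$, and setting $f=*_\Sigma\beta_\Sigma$, $\omega=*_\Sigma\alpha_\Sigma$, a direct calculation using the cone Hodge-star formulas shows that $\d\gamma_\infty=0$ is equivalent to $\d^*_\Sigma\omega=-2f$, while $\d^*_+\gamma_\infty=0$ is equivalent to $\d_\Sigma\omega+*_\Sigma\d_\Sigma f=0$. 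Applying $\d_\Sigma$ to the latter and using $\d_\Sigma^2=0$ yields $\Delta_\Sigma f=0$ on the closed manifold $\Sigma$, so $f$ is constant; integrating the former over $\Sigma$, together with $\int_\Sigma \d^*_\Sigma\omega\,\vol_\Sigma=0$, then forces $f=0$, so $\beta_\Sigma=0$. The two equations collapse to $\d_\Sigma\omega=\d^*_\Sigma\omega=0$, so $\omega$ is a harmonic 1-form on $\Sigma$, and one then checks directly that $\gamma_\infty$ is closed and coclosed. The space of such $\gamma_\infty$ has dimension $b^1(\Sigma)=d_\mathcal{D}(-2)$, matching the analogous jump in $\dim\mathcal{H}^3_\upsilon$ at $\upsilon=-2$ (which has the same parametrization by harmonic 1-forms on $\Sigma$), so $\mathcal{O}(N,\mu_-)=0$ as required.

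The main obstacle I anticipate is the Hodge-star and $r$-weight bookkeeping on the cone used to extract the two equivalences in the previous paragraph — specifically, decomposing $\d^*\gamma_\infty$ into its 2-form-on-$\Sigma$ and $\d r\wedge(\text{1-form})$ parts and translating anti-self-duality into the explicit $\Sigma$-equation. Once these are in place, the reduction via $\Delta_\Sigma f=0$, the harmonic description of $\omega$, and the identification of the various critical spaces through the reflection $\upsilon\mapsto-\upsilon-4$ are elementary.
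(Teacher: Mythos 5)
Your core computation at the critical rate is exactly the paper's: writing $\gamma_\infty=r^2\beta_\Sigma+r\,\d r\wedge\alpha_\Sigma$, the two conditions you derive are (after applying $*_\Sigma$) the paper's equations $\d\alpha_\Sigma=2\beta_\Sigma$ and $*_\Sigma\d *_\Sigma\alpha_\Sigma+\d *_\Sigma\beta_\Sigma=0$, and your route to $\beta_\Sigma=0$ ($f$ harmonic hence constant, then $\int_\Sigma\d^*_\Sigma\omega=0$ forces $f=0$) is equivalent to the paper's ``harmonic and exact, hence zero.'' Your treatment of the non-critical rates in $(-2,\mu_0)$ via the reflection identifying closed anti-self-dual homogeneous 2-forms of rate $-4-\upsilon$ with $D(\upsilon)$ is a correct and explicit justification of something the paper leaves implicit.

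The gap is in your last step. You conclude by matching dimensions: you assert that the jump in $\dim\Ker(\d^*_++\d)_\upsilon$ at $\upsilon=-2$ and the jump in $\dim\mathcal{H}^3_\upsilon$ at $\upsilon=-2$ both equal $b^1(\Sigma)$. Neither equality is justified. The Lockhart--McOwen theory only gives that each jump is \emph{at most} the dimension of the space of homogeneous model solutions; whether a given homogeneous closed-and-coclosed $\gamma_\infty$ of rate $-1$ is actually realised as the asymptotic model of a genuine element of $\mathcal{H}^3_{\mu_-}(\hat{N})$ is precisely the kind of lifting statement that can be obstructed (it is the same flavour of question as the one this whole section is analysing), and you have not proved it. What you need is only the inequality ``jump of $\Ker$ $\leq$ jump of $\mathcal{H}^3$,'' and this follows directly from what you have already established, without any dimension count: given $\gamma\in\Ker(\d^*_++\d)_{\mu_-}$, Lockhart--McOwen provides a homogeneous $\gamma_\infty$ of rate $-1$ with $\d\gamma_\infty=\d^*_+\gamma_\infty=0$ such that $\gamma-\gamma_\infty\in L^2_{4,-1}$, hence $\gamma-\gamma_\infty\in\Ker(\d^*_++\d)_{-2}=\mathcal{H}^3_{-2}(\hat{N})$ by Lemma \ref{closedlem}; your computation shows $\gamma_\infty$ is closed and coclosed, so $\gamma$ itself is closed and coclosed and lies in $\mathcal{H}^3_{\mu_-}(\hat{N})$. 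This is the paper's argument, and replacing your final paragraph's dimension comparison with it closes the gap.
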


\begin{proof}
A form $\gamma$ adds to $\Ker(\d^*_++\d)_{\mu_-}$ at rate $-2$ 
if and only if it is asymptotic to a 3-form $\gamma_{\infty}$ on $C$ of rate $-3-(-2)=-1$ which satisfies
 $\d\gamma_{\infty}=\d^*_+\gamma_{\infty}=0$ by the work in \cite{LockhartMcOwen}.  Hence, if we abuse notation and identify $\gamma_{\infty}$ with its 
pull-back to $\hat{N}$, then $\gamma-\gamma_{\infty}\in\Ker(\d^*_++\d)_{-2}$.  Since $\Ker(\d^*_++\d)_{-2}=\mathcal{H}^3_{-2}(\hat{N})$ by Lemma \ref{closedlem},
$\gamma-\gamma_{\infty}$ is closed and coclosed, so if we show that $\d\gamma_{\infty}=\d^*\gamma_{\infty}=0$ then 
$\gamma\in\mathcal{H}^3_{\mu_-}(\hat{N})$ as we desire.

 Writing $\gamma_{\infty}$ as in \eq{cceq0} for $\upsilon=-2$, we see that $\alpha_{\Sigma}$ and $\beta_{\Sigma}$ satisfy
\begin{equation*}
\d\alpha_{\Sigma}=2\beta_\Sigma\quad\text{and}\quad *_\Sigma\!\d\!*_\Sigma\!\alpha_{\Sigma}+\d\!*_\Sigma\!\beta_\Sigma=0.
\end{equation*}
Taking $\d^*_{\Sigma}$ of the second equation, 
we deduce that $\beta_\Sigma$ is harmonic and exact, so must be zero.  Thus $\alpha_{\Sigma}$ is a closed and coclosed 2-form.  
By \eq{cceq1}, this means that $\d\gamma_{\infty}=\d^*\gamma_{\infty}=0$ as required.
\end{proof}

Combining the results in this section we deduce the following.

\begin{prop}\label{kerOprop}
If $\mu_+\in(-2,0)\setminus\mathcal{D}$ the dimension of the kernel of $\d$ in $L^2_{4,\mu_+}(\Lambda^2_+T^*\hat{N})$ is
$$b^2_+(\hat{N})+\dim\mathcal{O}(N,\mu_+)-
\!\!\!\!\!\!\sum_{\upsilon\in(-2,\mu_+)}\!\!\!\!\!\!d_{\mathcal{D}}(\upsilon).$$
\end{prop}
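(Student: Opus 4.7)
The plan is essentially an assembly: Proposition \ref{kercsprop} already gives a formula for $\dim\Ker\d$ on $L^2_{4,\mu_+}(\Lambda^2_+T^*\hat{N})$, and Proposition \ref{kerOprop} is just the simplification of that formula using the other results collected in this subsection. So my approach is to start from the expression in Proposition \ref{kercsprop} and rewrite the difference $\dim\Ker(\d^*_++\d)_{\mu_+} - \dim\Ker(\d^*_++\d)_{\mu_-}$ in terms of obstruction spaces.

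First, I would invoke the decomposition from \cite{LotayCS} mentioned just above Lemma \ref{mu-lem}, namely
\[
\dim\Ker(\d^*_++\d)_{\upsilon} \;=\; \dim\mathcal{H}^3_{\upsilon}(\hat{N}) + \dim\mathcal{O}(N,\upsilon),
\]
applied at both $\upsilon=\mu_+$ and $\upsilon=\mu_-$. Substituting this into the formula of Proposition \ref{kercsprop} produces
\[
b^2_+(\hat{N}) + \bigl(\dim\mathcal{H}^3_{\mu_+} - \dim\mathcal{H}^3_{\mu_-}\bigr) + \bigl(\dim\mathcal{O}(N,\mu_+) - \dim\mathcal{O}(N,\mu_-)\bigr) -\!\!\!\!\sum_{\upsilon\in(-2,\mu_+)}\!\!\!\!d_{\mathcal{D}}(\upsilon).
\]

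Next I would kill two of these terms. Since $\mu_-\in(-2,\mu_0)\subseteq(-2,0)$ and $\mu_+\in(-2,0)$ by hypothesis, Proposition \ref{cccsprop} directly yields $\dim\mathcal{H}^3_{\mu_+}(\hat{N})=\dim\mathcal{H}^3_{\mu_-}(\hat{N})$, so the harmonic difference vanishes. Lemma \ref{mu-lem} then gives $\mathcal{O}(N,\mu_-)=\{0\}$, eliminating the remaining $\mu_-$ contribution. What is left is exactly the claimed expression $b^2_+(\hat{N}) + \dim\mathcal{O}(N,\mu_+) - \sum_{\upsilon\in(-2,\mu_+)} d_{\mathcal{D}}(\upsilon)$.

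Since each ingredient is already in hand there is no real obstacle here; the only thing to check is that the hypotheses on $\mu_\pm$ in Proposition \ref{kercsprop} (in particular $\mu_+\in(-2,0)\setminus\mathcal{D}$ and $\mu_-\in(-2,\mu_0)$) are compatible with applying Proposition \ref{cccsprop} and Lemma \ref{mu-lem}, which they are by construction of $\mu_0$. So the proof reduces to writing out the two substitutions above in order.
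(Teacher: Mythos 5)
Your proposal is correct and follows exactly the paper's own argument: substitute the decomposition $\dim\Ker(\d^*_++\d)_{\upsilon}=\dim\mathcal{H}^3_{\upsilon}(\hat{N})+\dim\mathcal{O}(N,\upsilon)$ into Proposition \ref{kercsprop}, cancel the harmonic terms via Proposition \ref{cccsprop}, and kill $\mathcal{O}(N,\mu_-)$ via Lemma \ref{mu-lem}. Nothing is missing.
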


From the theory of \cite{LockhartMcOwen}, as we cross rate $\upsilon\in(-2,\mu_+)$ the dimension of the kernel of $\d$ changes 
if and only if there exists a closed self-dual 2-form 
$\alpha$ on $\hat{N}$ which is asymptotic to a homogeneous closed self-dual 2-form $\alpha_C$ on $C$ of rate $\upsilon$, in the sense that, for some $\delta>0$,
$$|\nabla^j_C(\Psi_N^*\alpha-\alpha_C)|=O(r^{\upsilon+\delta-j})\text{ as }r\rightarrow 0\text{ for all }j\in\N.$$
 Recall that we say that $\alpha_C$ lifts to $\alpha$ on $\hat{N}$ for convenience.  Notice that such $\alpha$ is of order $O(r^{\upsilon})$ and so cannot lie in 
$L^2_{4,\mu_+}$.  Hence $\alpha$ is subtracted from the kernel of $\d$ as we cross $\upsilon$ (or, if you prefer, added as we decrease the rate 
below $\upsilon$).  So the kernel will decrease by the maximal amount as we cross rate $\upsilon$ if and only if every homogeneous closed self-dual 2-form on $C$
of rate $\upsilon$ lifts to $\hat{N}$.

Proposition \ref{kerOprop} thus allows us to conclude that every homogeneous closed self-dual 2-form on $C$  of rate 
$\upsilon\in(-2,\mu_+)$ lifts to a closed self-dual 2-form on  $\hat{N}$ if and only if $\mathcal{O}(N,\mu_+)=\{0\}$.  In particular,
 $\alpha_A^0$ given in Proposition \ref{alphaAprop} will lift to $\hat{N}$ as in
 the matching condition in Definition \ref{matchingdfn} if $j_2^A[\alpha_A^0]\in j_2^N\big(H^2(\hat{N})\big)$  and $\mathcal{O}(N,\lambda)=\{0\}$.  
 We can therefore check that 
the matching condition is satisfied purely using a topological criterion and the spectrum of the curl operator on $\Sigma$.
The analytic part of the matching condition therefore relates to the work in \cite{Lotaystab} on \emph{stability} of coassociative conical singularities.

We now recall the notion of \emph{stability index} for a coassociative cone from \cite{Lotaystab}.

\begin{dfn}\label{stabdfn}
Let $\mathcal{C}$ denote a deformation family of coassociative cones in $\R^7$ containing $C$ which is closed under the action of translations and $\GG_2$ transformations. 
The \emph{$\mathcal{C}$-stability index} of $C$ is
$$\ind_{\mathcal{C}}(C)=\sum_{\upsilon\in(-1,1]}\!\!\!\! d_{\mathcal{D}}(\upsilon)-\dim\mathcal{C}.$$
If the family $\mathcal{C}$ consists solely of the $\GG_2\ltimes\R^7$
 transformations of $C$, we simply write $\ind_{\mathcal{C}}(C)=\ind(C)$ and call $\ind(C)$ the \emph{stability 
index} of $C$.  

Since translations of $C$ trivially provide coassociative deformations of $C$ of order $O(1)$, they define 
 homogeneous closed self-dual 2-forms on $C$ of rate $0$, and thus $d_{\mathcal{D}}(0)$ is at least equal to the dimension of the space of translations of $C$.  
 Moreover, $d_{\mathcal{D}}(1)$ is equal to the dimension of  infinitesimal coassociative conical deformations of $C$.  Overall, we have that 
 $\ind_{\mathcal{C}}(C)\geq 0$ and equals zero if and only if $\dim\mathcal{C}=d_{\mathcal{D}}(0)+d_{\mathcal{D}}(1)$ and $d_{\mathcal{D}}(\upsilon)=0$ for 
 all $\upsilon\in(-1,1)\setminus\{0\}$.

We say that $C$ is $\mathcal{C}$-\emph{stable} (or \emph{stable}) if $\ind_{\mathcal{C}}(C)=0$ (or $\ind(C)=0$).
\end{dfn}

The $\mathcal{C}$-stability index is a non-negative integer invariant and it follows from \cite[Proposition 4.11]{Lotaystab} that 
the deformation theory of $N$ as a CS coassociative 4-fold, where we allow the singularity to move in $M$ and the cone at the singularity to deform in 
$\mathcal{C}$, is unobstructed if $\ind_{\mathcal{C}}(C)=0$.  In particular, we have the following result.

\begin{prop} If\/ $\ind_{\mathcal{C}}(C)=0$ then 
$\dim\mathcal{O}(N,\mu_+)=0$ for all $\mu_+\in(-2,0)$, using the notation of Definitions \ref{obstructdfn} and \ref{stabdfn}. 
\end{prop}

\noindent We deduce the following.

\begin{prop}\label{stabprop}  Recall Definitions \ref{matchingdfn}, \ref{topmatchingdfn}, \ref{obstructdfn} and \ref{stabdfn}.
\begin{itemize}
\item[\emph{(a)}] If the topological matching condition holds and $\mathcal{O}(N,\lambda)=\{0\}$, then the matching condition between $A$ and $N$ holds.
\item[\emph{(b)}] 
If $C$ is $\mathcal{C}$-stable for some deformation family $\mathcal{C}$, then the matching condition between $A$ and $N$ is 
equivalent to the topological matching condition.
\end{itemize}
\end{prop}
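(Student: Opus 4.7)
The plan is to reduce both (a) and (b) to the lifting statement for homogeneous closed self-dual 2-forms on $C$ articulated just after Proposition \ref{kerOprop}, using Proposition \ref{infdilprop} to convert the topological hypothesis into one about $[\alpha_\Sigma^0]$ and Proposition \ref{alphaSigmaprop} to handle the critical rate $-2$ separately.

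For (a), first recall that the matching condition requires, for each $i=1,\ldots,d$, a closed self-dual 2-form $\alpha_N^i$ on $\hat{N}$ asymptotic to $\alpha_A^i$ at rate $\lambda_i+\delta_0$. I would split the indices according to whether $\lambda_i=-2$ or $\lambda_i\in(-2,\lambda]$. In the first case (which only arises if $\lambda_1=-2$), Proposition \ref{infdilprop} identifies $j_2^A[\varphi_0]$ with a nonzero multiple of $[\alpha_\Sigma^0]\in H^2(\Sigma)$, so the topological matching hypothesis gives $[\alpha_\Sigma^0]\in\Imm j_2^N$ and Proposition \ref{alphaSigmaprop} then delivers $\alpha_N^1$ with the required asymptotic rate $-2+\delta_0$. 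In the second case each $\alpha_\Sigma^i$ is exact, hence $[\alpha_\Sigma^i]=0$ and no topological obstruction is present; here I would invoke the consequence of Proposition \ref{kerOprop} spelled out in the paragraph following it: since $\mathcal{O}(N,\lambda)=\{0\}$, every homogeneous closed self-dual 2-form on $C$ of rate in $(-2,\lambda]$ lifts to a closed self-dual 2-form on $\hat{N}$. Applied to each $\alpha_A^i$, this produces the desired $\alpha_N^i$. One then has to check that the lift is asymptotic to $\alpha_A^i$ at rate $\lambda_i+\delta_0$ for some $\delta_0>0$; this comes from the standard asymptotic expansion of $L^2_{k,\upsilon}$ forms on AC/CS ends provided by the Lockhart--McOwen theory (as used in \cite{LotayCS}), together with the fact that $\mathcal{D}$ is discrete, so we may shrink $\delta_0$ until no element of $\mathcal{D}$ lies in $(\lambda_i,\lambda_i+\delta_0]$ for any $i$.

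For (b), the key input is \cite[Proposition 4.11]{Lotaystab}: $\mathcal{C}$-stability of $C$ implies that the CS coassociative deformation theory of $N$ (with the singularity allowed to move and the cone to deform in $\mathcal{C}$) is unobstructed, and in particular $\mathcal{O}(N,\mu_+)=\{0\}$ for every $\mu_+\in(-2,0)\setminus\mathcal{D}$, hence for $\mu_+=\lambda$ (after possibly perturbing $\lambda$ slightly upward, which is permitted by the remarks after Definition \ref{acdfn}). One direction of the equivalence is then immediate from (a). For the converse, I would observe that the matching condition \emph{always} implies the topological matching condition, with no stability needed: the existence of $\alpha_N^1$ asymptotic to $\alpha_A^1$ forces $[\alpha_\Sigma^0]\in\Imm j_2^N$ (by pulling back via $\Psi_N$ and taking cohomology classes of the asymptotic model on $\Sigma$; the error term is of order $O(r^{-2+\delta_0})$ and hence cohomologically trivial), and then Proposition \ref{infdilprop} identifies $[\alpha_\Sigma^0]$ with (a multiple of) $j_2^A[\varphi_0]$.

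The step I expect to be the most delicate is the asymptotic control in part (a): the lifting result from Proposition \ref{kerOprop} (and the underlying \cite{LotayCS} machinery) produces a closed self-dual 2-form whose leading asymptotic piece is $\alpha_A^i$, but pinning down the exact rate $\lambda_i+\delta_0$ of the remainder requires some care with the critical set $\mathcal{D}$ and the weighted Sobolev indices, together with elliptic regularity to upgrade from an $L^2_{k,\lambda_i+\delta_0}$ bound on the tail to a pointwise $C^\infty$ bound with derivatives. Everything else is essentially an assembly of results already in the excerpt.
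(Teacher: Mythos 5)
Your proposal is correct and follows essentially the same route as the paper, which leaves the proof implicit in the discussion surrounding Propositions \ref{kerOprop}, \ref{infdilprop} and \ref{alphaSigmaprop}: the rate $-2$ piece is handled topologically via Proposition \ref{infdilprop} and Proposition \ref{alphaSigmaprop}, the rates in $(-2,\lambda]$ via the lifting criterion $\mathcal{O}(N,\cdot)=\{0\}$ from Proposition \ref{kerOprop}, and part (b) via \cite[Proposition 4.11]{Lotaystab} together with the (always valid) implication from the matching condition to the topological one. Your added care about the critical set $\mathcal{D}$ and the asymptotic rate of the lift is exactly the right place to be careful, and is consistent with the Lockhart--McOwen machinery the paper relies on.
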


\begin{remark}
Proposition \ref{mainthm3} follows from Proposition \ref{stabprop}(b).
\end{remark}

The reader might wonder why the sum in the stability index is over $(-1,1]$ rather than the seemingly more natural $(-2,1]$, since $-2$ is the key rate at which 
obstructions begin to appear.  However, $\upsilon=-1$ is the crucial rate for understanding the 3-forms in the cokernel of $(\d_++\d^*)_{\upsilon}$ given in 
\eq{lindefmap}, i.e. $\d^*_++\d$ acting on 
$L^2_{4,-3-\upsilon}$ which is $L^2_{4,-2}$ when $\upsilon=-1$.  This cokernel determines the obstruction space for the deformations of coassociative 4-folds with conical singularities.  It follows from 
these observations, as can be seen in \cite{Lotaystab}, that any potential obstructions for the deformation theory of 
 CS coassociative 4-folds arising from cokernel forms for rates in $(-2,-1]$ are ineffective.  Hence, obstructions can only appear from cokernel forms for rates 
$\upsilon>-1$ and this leads to the given formula for the stability index.  

The author speculates that one can improve the definition of the stability index 
still further by showing that obstructions remain ineffective for higher rates, potentially for all $\upsilon<0$ 
(and the same could be true in the special Lagrangian setting).  
This may ultimately mean that the purely analytic parts of the 
matching condition are always satisfied; i.e.~that the only obstructions to gluing $tA$ and $N$ are topological, which is a natural criterion to expect.

\section{Desingularization: analysis}

In this section we apply analytic techniques to prove our main result (Theorem \ref{mainthm1}).  
We begin by deriving a key Sobolev embedding inequality, 
 which involves the construction of an ``approximate kernel'' for the exterior derivative on self-dual 2-forms on our glued manifold.    
  We then view our desingularization problem as a fixed point problem for a certain map, so we show that this map is a contraction 
using further analytic estimates.  To derive the embedding inequality and the estimates we shall make crucial use of the geometric 
preliminaries of $\S$\ref{geometry}.  

For the whole of this section we let $\delta$ satisfy
\begin{equation}\label{deltaeq}
0\leq\delta<\text{max}\{-(1+2\lambda),1\}
\end{equation}
  and be such that $[-2-\delta,-2+\delta]\cap\mathcal{D}=\{-2\}$ if $-2\in\mathcal{D}$
 and let $\delta=0$ if $-2\notin\mathcal{D}$.  This is possible by our assumption that $\lambda<-\frac{1}{2}$ 
and the properties of $\mathcal{D}$.

\subsection{The approximate kernel}  

Here we obtain our Sobolev embedding inequality, closely following the work in \cite{Lotaydesing} with improvements in light of Pacini's work in \cite{Pacinidesing}.  The result is a bound (depending on $t$ in an explicit way) 
for the norm of self-dual 2-forms $\alpha$ on $\tilde{N}(t)$, transverse to the closed forms, by the norm of $\d\alpha$.  Since we have scale-invariant 
Sobolev embedding inequalities on $A$ and $\hat{N}$, the idea is to use closed self-dual 2-forms on $A$ and $\hat{N}$ to
 build a subspace of the self-dual 2-forms on $\tilde{N}(t)$ 
 which ``approximates'' the kernel of the exterior derivative on self-dual 2-forms. 

If the closed self-dual 2-forms on $A$ and $\hat{N}$ decay sufficiently fast on the ends, we can simply cut them off and approximate them using a compactly supported 2-form which can easily be viewed as an 
approximate kernel form on $\tilde{N}(t)$.  However, at the critical decay rate, namely at the $L^2$ growth rate $-2$, this cut off procedure will not work and so 
one needs to define approximate kernel forms by interpolating between $L^2$ kernel forms on $A$ and $L^2$ kernel forms on $\hat{N}$, when this is possible.
  Using the topological calculations in $\S$\ref{geometry}, we find that these forms define an approximate kernel of equal dimension to the actual kernel.

However, the interpolation between $L^2$ kernel forms on $A$ and $\hat{N}$ is not always possible, and one can detect this topologically by the work in $\S$\ref
{obstruct}. When this occurs, we have closed self-dual 2-forms on $A$ which do \emph{not} define approximate kernel forms on $\tilde{N}(t)$ and so give potential 
obstructions.  These forms will cause the Sobolev embedding constant to blow up as $t\rightarrow 0$ but because we can identify the forms explicitly we can 
determine the rate at which the blow up occurs.

\begin{prop}\label{AapKer}
Let $\mathcal{K}^{A}_{\pm}$ be the (finite-dimensional) kernel of 
\begin{equation*}
\d:L^{2}_{4,-2\pm\delta}(\Lambda^2_+T^*A)\rightarrow L^{2}_{3,-3\pm\delta}(\Lambda^3T^*A)
\end{equation*}
and let $\mathcal{K}^A_{0}$ be such that $$\mathcal{K}^A_+=\mathcal{K}^A_-\oplus\mathcal{K}_{0}^A.$$
There is a subspace $\mathcal{K}_{\text{\emph{ap}}}^{A}\subseteq C^{\infty}_{\text{\emph{cs}}}(\Lambda^2_+T^*A)$, $L^2$-orthogonal to
 $\mathcal{K}_{0}^A$ with $\dim\mathcal{K}_{\text{\emph{ap}}}^{A}=\dim\mathcal{K}^A_-$, and a constant $C(A)>0$ such that if $\alpha\in L^{2}_{4,-2+\delta}\big(\Lambda^2_+T^*A\big)$ 
satisfies $\langle\alpha,\beta\rangle_{L^2}=0$ for all $\beta\in \mathcal{K}_{\text{\emph{ap}}}^{A}\oplus\mathcal{K}^A_{0}$ then 
\begin{equation}\label{Aesteq}
\|\alpha\|_{L^2_{4,-2+\delta}}\leq C(A)\|\d\alpha\|_{L^2_{3,-3+\delta}}.
\end{equation}
Moreover, this estimate holds for the same constant $C(A)$ on $tA$ for all $t>0$.
\end{prop}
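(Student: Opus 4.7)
The plan is to establish the baseline elliptic estimate via weighted Fredholm theory, then replace the genuine kernel $\mathcal{K}^A_+$ by the compactly supported approximation $\mathcal{K}^A_{\text{ap}}\oplus\mathcal{K}^A_0$. By the Lockhart--McOwen theory on asymptotically conical manifolds, applied as in \cite{LotayAC} to $\d$ on self-dual 2-forms (equivalently to $\d+\d^*$ on $\Lambda^2_+\oplus\Lambda^4$), the map $\d:L^2_{4,-2+\delta}(\Lambda^2_+T^*A)\to L^2_{3,-3+\delta}(\Lambda^3T^*A)$ is Fredholm with kernel $\mathcal{K}^A_+$, because $-2+\delta$ misses $\mathcal{D}$ by our choice of $\delta$ in \eq{deltaeq}. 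The open mapping theorem then supplies a constant $C_0$ with
$$\|\alpha\|_{L^2_{4,-2+\delta}}\leq C_0\|\d\alpha\|_{L^2_{3,-3+\delta}}$$
for every $\alpha$ in a chosen closed complement of $\mathcal{K}^A_+$.

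To construct $\mathcal{K}^A_{\text{ap}}$, I would first pick a basis $\beta_1,\ldots,\beta_k$ of $\mathcal{K}^A_-$; by Definition \ref{H2+dfn} these are genuinely $L^2$ harmonic self-dual 2-forms, since $\mathcal{K}^A_-\subseteq L^2_{4,-2-\delta}$ decays strictly faster than $r^{-2}$. Multiplying by a cutoff $\chi_R$ yields compactly supported $\tilde\beta_i=\chi_R\beta_i\in C^\infty_{\text{cs}}(\Lambda^2_+)$ with $\tilde\beta_i\to\beta_i$ in $L^2$. To enforce $L^2$-orthogonality to the finite-dimensional $\mathcal{K}^A_0$, I would select auxiliary compactly supported self-dual forms $\gamma_1,\ldots,\gamma_\ell$ (with $\ell=\dim\mathcal{K}^A_0$) whose $L^2$-pairing matrix with a basis of $\mathcal{K}^A_0$ is invertible (available by density of $C^\infty_{\text{cs}}(\Lambda^2_+)$ in $L^2(\Lambda^2_+)$), and subtract appropriate linear combinations to obtain $\beta_i^{\#}=\tilde\beta_i-\sum_j c_{ij}\gamma_j\in C^\infty_{\text{cs}}(\Lambda^2_+)$ satisfying $\langle\beta_i^{\#},\mathcal{K}^A_0\rangle_{L^2}=0$. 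For $R$ sufficiently large the $\beta_i^{\#}$ remain linearly independent, so $\mathcal{K}^A_{\text{ap}}=\text{span}\{\beta_i^{\#}\}$ has the required dimension.

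For the main estimate, given $\alpha\perp\mathcal{K}^A_{\text{ap}}\oplus\mathcal{K}^A_0$ I would decompose $\alpha=\alpha_++\alpha_\perp$ with $\alpha_+\in\mathcal{K}^A_+$ and $\alpha_\perp$ in the Fredholm complement, so $\|\alpha_\perp\|\leq C_0\|\d\alpha\|$. Expanding $\alpha_+=\sum_i a_i\beta_i+\beta_0$ with $\beta_0\in\mathcal{K}^A_0$, the orthogonality conditions translate into a finite linear system on $(a_i,\beta_0)$ whose coefficient matrix, for $R$ large, is close to block-diagonal with invertible blocks --- namely the approximate Gram matrix of $\{\beta_i\}$ on $\mathcal{K}^A_-$ and the pairing matrix of $\{\gamma_j\}$ on $\mathcal{K}^A_0$; the inhomogeneous terms are controlled by $\|\alpha_\perp\|$, giving $\|\alpha_+\|\leq C_1\|\alpha_\perp\|$ and hence the claimed estimate with $C(A)=(1+C_1)C_0$. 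Scale invariance under $A\mapsto tA$ is essentially formal: the dilation action \eq{dilationeq} on self-dual 2-forms identifies the weighted Sobolev norms on $A$ and $tA$ equivariantly, the shift from weight $-2+\delta$ to $-3+\delta$ precisely matches the unit weight drop induced by $\d$, and the compactly supported $\beta_i^{\#}$ dilate to compactly supported forms on $tA$, so the same constant $C(A)$ works uniformly in $t$.

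I expect the main technical obstacle to lie in verifying the asymptotic block-diagonality of the pairing matrix in the third step as $R\to\infty$. The potentially subtle entries are the cross-pairings between the compactly supported $\beta_i^{\#}$ and the borderline rate $-2$ elements of $\mathcal{K}^A_0$: the construction enforcing $\beta_i^{\#}\perp\mathcal{K}^A_0$ in $L^2$ handles these directly, but one must also verify that the Gram block on $\mathcal{K}^A_-$ converges to an invertible matrix as $R\to\infty$ (which uses the strict $L^2$-decay of $\mathcal{K}^A_-$), and that the $L^2$-pairing between $\alpha\in L^2_{4,-2+\delta}$ and $\beta_0\in\mathcal{K}^A_0$ is well-defined, most naturally interpreted via the duality between weighted spaces rather than as a literal $L^2$-integral since both factors are only of borderline integrability.
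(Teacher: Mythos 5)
Your argument follows the paper's proof in all essentials: the weighted Fredholm theory for $\d_++\d^*$ on $\Lambda^2_+\oplus\Lambda^4$ supplies the baseline estimate, $\mathcal{K}^A_{\text{ap}}$ is obtained by cutting off $\mathcal{K}^A_-$ at infinity so that its $L^2$-pairing with $\mathcal{K}^A_-$ stays non-degenerate while being orthogonal to $\mathcal{K}^A_0$ (the paper compresses your third step into ``standard techniques''), and the dilation-equivariance argument for the $t$-independence of $C(A)$ is identical. The one loose end you flag --- invertibility of the Gram block on $\mathcal{K}^A_-$ after subtracting the corrections $\sum_j c_{ij}\gamma_j$ --- is closed by choosing the $\gamma_j$ not only to pair non-degenerately with $\mathcal{K}^A_0$ but also to be $L^2$-orthogonal to $\mathcal{K}^A_-$, which is possible because compactly supported self-dual forms realize every linear functional on the finite-dimensional space $\mathcal{K}^A_+$.
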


\begin{proof}
The map
\begin{equation}\label{Add*eq}
\d_++\d^*:L^{2}_{4,-2\pm\delta}(\Lambda^2_+T^*A\oplus\Lambda^4T^*A)\rightarrow L^{2}_{3,-3+\delta}(\Lambda^3T^*A)
\end{equation}
is elliptic and Fredholm by choice of $\delta$ (c.f.~\cite[Proposition 5.4]{LotayAC} and the remarks preceding the statement), and therefore has a finite-dimensional kernel of smooth forms by elliptic
 regularity. Since $\mathcal{K}^{A}_{\pm}$ is contained in this kernel it is also necessarily finite-dimensional and consists of smooth
 forms.
 
We can cut off the forms in $\mathcal{K}^A_-$ appropriately at infinity to define a space
 $\mathcal{K}^{A}_{\text{ap}}$ of compactly supported self-dual 2-forms on $A$, $L^2$-orthogonal to $\mathcal{K}_{0}^A$,
 so that if $\alpha\in\mathcal{K}^A_-$ is 
$L^2$-orthogonal to $\mathcal{K}^A_{\text{ap}}$, then $\alpha=0$.  (This is a manifestation of the fact that, by definition, 
$C^{\infty}_{\text{cs}}$ is dense in $L^2_{4,-2-\delta}$.)  In other words, we can ensure that the $L^2$-orthogonal complement of
 $\mathcal{K}^A_{\text{ap}}$ in $L^2_{4,-2+\delta}$ is transverse to $\mathcal{K}^A_-$ and contains $\mathcal{K}_{0}^A$.   
The theory of elliptic operators on weighted Sobolev spaces as in \cite{Mazya} applied to \eq{Add*eq} 
allows us to deduce the existence of the constant $C(A)$ using standard techniques.  

By definition of the weighted norm,
 if $\beta$ is an $m$-form on $A$ then $\|\beta\|_{L^2_{k,\upsilon}}$ scales with order $t^{-\upsilon-m}$ under dilation by 
$t$.  Therefore both sides of \eq{Aesteq} scale by the same factor under dilation and thus we can choose $C(A)$ independent of $t$.  (See \cite{Pacinidesing} for 
a detailed discussion of the scaling properties of weighted Sobolev norms.)
\end{proof}

We can also prove the following analogue of Proposition \ref{AapKer} in a similar (easier) manner which we omit.

\begin{prop}\label{hatNapKer}
Let $\mathcal{K}^{N}$ be the (finite-dimensional) kernel of 
\begin{equation*}
\d:L^{2}_{4,-2+\delta}(\Lambda^2_+T^*\hat{N})\rightarrow L^{2}_{3,-3+\delta}(\Lambda^3T^*\hat{N}).
\end{equation*}
There is a subspace $\mathcal{K}_{\text{\emph{ap}}}^{N}\subseteq C^{\infty}_{\text{\emph{cs}}}(\Lambda^2_+T^*\hat{N})$, 
with $\dim\mathcal{K}_{\text{\emph{ap}}}^{N}=\dim\mathcal{K}^N$, and a constant $C(N)>0$ such that if $\alpha\in L^{2}_{4,-2+\delta}(\Lambda^2_+T^*\hat{N})$ 
satisfies $\langle\alpha,\beta\rangle_{L^2}=0$ for all $\beta\in \mathcal{K}_{\text{\emph{ap}}}^{N}$ then 
\begin{equation*}\label{Nesteq}
\|\alpha\|_{L^2_{4,-2+\delta}}\leq C(N)\|\d\alpha\|_{L^2_{3,-3+\delta}}.
\end{equation*}
\end{prop}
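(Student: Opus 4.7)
The plan mirrors the proof of Proposition \ref{AapKer}, with simplifications arising because only one weight $-2+\delta$ is involved on $\hat N$. On the CS manifold this weight lies \emph{above} the critical $L^2$ threshold at the singularity (the volume form carries an extra factor of $r^3$), so kernel elements are automatically square integrable and there is no need for an auxiliary space analogous to $\mathcal{K}_0^A$.

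First, I would invoke the Lockhart--McOwen/Ma\'zya theory for elliptic operators on weighted Sobolev spaces over CS manifolds, exactly as used in \cite[$\S$6]{LotayCS}, for the elliptic operator
$$\d_++\d^*:L^2_{4,-2+\delta}(\Lambda^2_+T^*\hat N\oplus\Lambda^4 T^*\hat N)\rightarrow L^2_{3,-3+\delta}(\Lambda^3 T^*\hat N).$$
Our choice of $\delta$ guarantees that $-2+\delta$ is not an exceptional weight for this operator (its indicial set is governed by $\mathcal{D}$), so the operator is Fredholm. Elliptic regularity yields smoothness of the kernel, and in particular $\mathcal{K}^N$, being a subspace of this finite-dimensional kernel, is finite-dimensional and consists of smooth forms.

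Next, I would construct $\mathcal{K}^N_{\text{ap}}$ by cutting off a basis of $\mathcal{K}^N$. Choose a basis $\beta_1,\ldots,\beta_m$ of $\mathcal{K}^N$ and a family of cutoff functions $\phi_s$, supported away from the singularity, with $\phi_s\equiv 1$ on $\{\rho_t\geq 2s\}$ and $\phi_s\equiv 0$ on $\{\rho_t\leq s\}$. Since $-2+\delta>-2$, the subspace $C^{\infty}_{\text{cs}}$ is dense in $L^2_{4,-2+\delta}$, so $\|\phi_s\beta_i-\beta_i\|_{L^2_{4,-2+\delta}}\to 0$ as $s\to 0$. Projecting $\phi_s\beta_i$ onto the self-dual part (which we denote $\tilde\beta_i\in C^\infty_{\text{cs}}(\Lambda^2_+T^*\hat N)$) and choosing $s$ small enough, the Gram matrix $\bigl(\langle\tilde\beta_i,\beta_j\rangle_{L^2}\bigr)$ becomes a small perturbation of the positive-definite $L^2$ Gram matrix of $\mathcal{K}^N$ and is therefore non-singular. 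I set $\mathcal{K}^N_{\text{ap}}=\Span(\tilde\beta_1,\ldots,\tilde\beta_m)$; non-singularity of the Gram matrix gives $\dim\mathcal{K}^N_{\text{ap}}=\dim\mathcal{K}^N$, and the $L^2$ pairing induces an isomorphism between $\mathcal{K}^N$ and the dual of $\mathcal{K}^N_{\text{ap}}$.

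Finally, I would derive the estimate. Fredholm theory on weighted Sobolev spaces produces a constant $C'>0$ such that every $\alpha\in L^2_{4,-2+\delta}(\Lambda^2_+T^*\hat N)$ which is $L^2$-orthogonal to the full kernel $\mathcal{K}^N$ satisfies $\|\alpha\|_{L^2_{4,-2+\delta}}\leq C'\|\d\alpha\|_{L^2_{3,-3+\delta}}$. Given $\alpha\in L^2_{4,-2+\delta}$ which is only $L^2$-orthogonal to $\mathcal{K}^N_{\text{ap}}$, I would use the non-degeneracy established in the previous step to write $\alpha=\alpha^{\perp}+\beta$ uniquely, with $\beta\in\mathcal{K}^N$ and $\alpha^{\perp}\perp_{L^2}\mathcal{K}^N$; the estimate then controls $\alpha^{\perp}$, while the assumption $\alpha\perp_{L^2}\mathcal{K}^N_{\text{ap}}$ combined with the isomorphism from the Gram matrix forces $\beta=0$. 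The main obstacle -- and the part requiring most care -- is precisely this transfer between orthogonality to $\mathcal{K}^N_{\text{ap}}$ and orthogonality to $\mathcal{K}^N$; it is cleaner here than in Proposition \ref{AapKer} because there is no distinction between the $L^2$ kernel and the larger weighted kernel to reconcile.
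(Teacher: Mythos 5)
Your approach matches the paper's intent: the paper omits this proof and says it runs "in a similar (easier) manner" to Proposition~\ref{AapKer}, and your cut-off construction of $\mathcal{K}^N_{\text{ap}}$ via density of $C^\infty_{\text{cs}}$ and non-degeneracy of the $L^2$ Gram pairing is the right strategy. Two points, though.

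The more minor one: the "projection onto the self-dual part" is unnecessary. Multiplication by the scalar cutoff $\phi_s$ already preserves self-duality, so $\phi_s\beta_i\in C^\infty_{\text{cs}}(\Lambda^2_+T^*\hat N)$ without any further work. Relatedly, the mechanism underlying "no analogue of $\mathcal{K}^A_0$" is not quite that kernel elements are $L^2$ (they are), but that on a CS end ($\rho\to 0$, $\rho<1$) the norm $\|\cdot\|_{L^2_{4,-2+\delta}}$ \emph{dominates} the unweighted $L^2$ norm, so convergence of $\phi_s\beta_i\to\beta_i$ in the weighted norm implies convergence in $L^2$; this is exactly the reverse of the situation at the AC end of $A$, where $L^2_{4,-2+\delta}$-convergence does \emph{not} control $L^2$, which is why Proposition~\ref{AapKer} must use weight $-2-\delta$ for the cut-off and isolate $\mathcal{K}^A_0$ separately.

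The substantive gap is in your final step. You write $\alpha=\alpha^\perp+\beta$ with $\alpha^\perp\perp_{L^2}\mathcal{K}^N$ and $\beta\in\mathcal{K}^N$ and then claim that orthogonality of $\alpha$ to $\mathcal{K}^N_{\text{ap}}$ together with the Gram isomorphism forces $\beta=0$. This is false: $\langle\alpha,\tilde\beta_j\rangle_{L^2}=0$ gives $\langle\beta,\tilde\beta_j\rangle_{L^2}=-\langle\alpha^\perp,\tilde\beta_j\rangle_{L^2}$, and there is no reason for the right-hand side to vanish, since $\alpha^\perp$ is orthogonal to $\mathcal{K}^N$ but not to $\mathcal{K}^N_{\text{ap}}$. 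What the non-degeneracy actually gives is a bound, not vanishing: since all norms on the finite-dimensional $\mathcal{K}^N$ are equivalent, invertibility of the Gram matrix yields $\|\beta\|_{L^2_{4,-2+\delta}}\leq c\sum_j|\langle\beta,\tilde\beta_j\rangle_{L^2}|=c\sum_j|\langle\alpha^\perp,\tilde\beta_j\rangle_{L^2}|\leq c'\|\alpha^\perp\|_{L^2}\leq c''\|\alpha^\perp\|_{L^2_{4,-2+\delta}}$. Combined with the Fredholm estimate $\|\alpha^\perp\|_{L^2_{4,-2+\delta}}\leq C'\|\d\alpha^\perp\|_{L^2_{3,-3+\delta}}=C'\|\d\alpha\|_{L^2_{3,-3+\delta}}$ and the triangle inequality $\|\alpha\|\leq\|\alpha^\perp\|+\|\beta\|$, this yields the claimed estimate. (Equivalently, one can observe that $(\mathcal{K}^N_{\text{ap}})^{\perp_{L^2}}$ is a closed complement to $\mathcal{K}^N$ in $L^2_{4,-2+\delta}$ by non-degeneracy, and that $\d$ restricted to any closed complement of its kernel has bounded inverse onto its image by the Open Mapping Theorem.) The conclusion stands, but the reasoning as written does not establish it.
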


We now wish to 
define our approximate kernel.  We begin with $\mathcal{K}^A_{\text{ap}}$ and define a diffeomorphism 
\begin{equation*}
\Psi_{A,t}:t\hat{A}(t)=tK_A\cup t\Phi_A\big((R,t^{-1}\epsilon)\times\Sigma\big)\rightarrow\chi(tK_A)\cup\Upsilon_C(\Gamma_{\alpha_C(t)})
\end{equation*}
 by 
\begin{equation}\label{PsiAteq}
\Psi_{A,t}(x)=\left\{\begin{array}{ll} \chi(x) & x\in tK_A,\\
\Upsilon_C\big(r,\sigma,\alpha_C(t)(r,\sigma)\big) & x=t\Phi_A(t^{-1}r,\sigma). \end{array}\right.
\end{equation}
If $\tau$ is sufficiently small we may identify the metrics, hence the self-dual 2-forms, on $t\hat{A}(t)$ and $\Psi_{A,t}\big(t\hat{A}(t)\big)$.  This allows 
us to view the open subset $t\hat{A}(t)$ of $tA$  as a subset of the desingularization $\tilde{N}(t)$.

Let $\beta_1^A,\ldots,\beta_{m_A}^A$ be a basis for $\mathcal{K}^A_{\text{ap}}$.
Since $\nu<1$, we can choose $\tau$ so that 
$$\supp\beta_i^A\subseteq K_A\sqcup\Phi_A\big((R,\textstyle\frac{1}{2}t^{\nu-1})\times\Sigma\big)\subseteq \hat{A}(t)$$ for $i=1,\ldots,m_A$, so we may identify 
the $\beta_i^A$ with forms on $t\hat{A}(t)$ in the obvious manner (namely, via pullback of the map $t\hat{A}(t)\mapsto \hat{A}(t)$, which is 
the inverse of dilation by a factor of $t$), which we denote by the same symbols. 
Using $\Psi_{A,t}$ we can define for each $\beta_i^A$ on $t\hat{A}(t)$ a corresponding self-dual 2-form $\xi_i^A$ on 
$\tilde{N}(t)$ which vanishes outside $\Psi_{A,t}(\supp\beta_i^A)$. 

\begin{dfn}\label{apKerdfn1}
Let $\tilde{\mathcal{K}}^A_{\text{ap}}(t)=\Span\{\xi_1^A,\ldots,\xi_{m_A}^A\}$.
\end{dfn}

We now deal with $\mathcal{K}^N_{\text{ap}}$ and define a diffeomorphism 
\begin{equation*}
\Psi_{N,t}:\hat{N}(t)=\Psi_N\big((tR,\epsilon)\times\Sigma\big)\cup K_N\rightarrow \Upsilon_C(\Gamma_{\alpha_C(t)})\cup
\Upsilon_N(\Gamma_{\alpha^N_0(t)|_{K_N}})
\end{equation*}
 by 
\begin{equation}\label{PsiNteq}
\Psi_{N,t}(x)=\left\{\begin{array}{ll} \Upsilon_C\big(r,\sigma,\alpha_C(t)(r,\sigma)\big) & x=\Psi_N(r,\sigma),\\
\Upsilon_N\big(x,\alpha^N_0(t)(x)\big)  & x\in K_N. \end{array}\right.
\end{equation}
For $\tau$ sufficiently small we can identify the metrics, hence the self-dual 2-forms, on $\hat{N}(t)$ and $\Psi_{N,t}(\hat{N}(t))$.  As above, this 
allows us to view the open subset $\hat{N}(t)$ of $\hat{N}$ as a subset of $\tilde{N}(t)$.
 
Let $\beta_{1}^N,\ldots,\beta^N_{m_N}$ be a basis for $\mathcal{K}^N_{\text{ap}}$.  Since $\nu>0$ we can ensure, by making $\tau$ smaller if necessary, that 
 $$\supp\beta_i^N\subseteq K_N\cup\Psi_N\big((t^{\nu},\epsilon)\times\Sigma\big)\subseteq\hat{N}(t)$$ for all $i$.    Using $\Psi_{N,t}$ we can then 
 define for each $\beta_i^N$ a corresponding self-dual 2-form $\xi_i^N$ on $\tilde{N}(t)$ which vanishes outside $\Psi_{N,t}(\supp\beta_i^N)$.  

\begin{dfn}\label{apKerdfn2}
Let $\tilde{\mathcal{K}}^N_{\text{ap}}(t)=\Span\{\xi_1^N,\ldots,\xi_{m_N}^N\}$.
\end{dfn}

It will be important to identify the closed self-dual 2-forms on $A$ which extend to $\hat{N}$ and those which do not, as we saw from our discussion of the 
obstructions to the gluing problem in $\S$\ref{obstruct}.  Motivated by Proposition
\ref{alphaSigmaprop} we can split $\mathcal{K}_0^A$ in the following useful way.

\begin{dfn}\label{apKerdfnIO} Recall \eq{Aseq} and \eq{Nseq}.
Let $$\mathcal{K}^{\mathcal{I}}=\{\alpha\in\mathcal{K}^A_0\,:\,j_2^A[\alpha]\in \Imm j_2^N\}$$
and let $\mathcal{K}^{\mathcal{O}}$ be such that $\mathcal{K}^A_0=\mathcal{K}^{\mathcal{I}}\oplus\mathcal{K}^{\mathcal{O}}$.
\end{dfn}

The notation $\mathcal{I}$ and $\mathcal{O}$ reflects the fact that elements in $\mathcal{K}^{\mathcal{I}}$ extend to infinitesimal
 deformations of $\hat{N}$ and hence $\tilde{N}(t)$, whereas $\mathcal{K}^\mathcal{O}$ gives potential obstructions.
Let $\beta^{\mathcal{I}}_1,\ldots,\beta^{\mathcal{I}}_{m_{\mathcal{I}}}$ and 
$\beta^{\mathcal{O}}_1,\ldots,\beta^{\mathcal{O}}_{m_{\mathcal{O}}}$ form bases of $\mathcal{K}^{\mathcal{I}}$ and 
$\mathcal{K}^{\mathcal{O}}$.  Since every element of $\Imm j_2^A$ lifts to  a form in $\mathcal{K}^A_0$ 
by the work in \cite{LotayAC}, we have that 
\begin{equation}\label{dimKIeq}
\dim\mathcal{K}^{\mathcal{I}}=\dim(\Imm j_2^A\cap\Imm j_2^N).
\end{equation}

We shall now define a part of the approximate kernel using $\mathcal{K}^{\mathcal{I}}$ and we explain the idea.  
Since each $\beta\in\mathcal{K}^{\mathcal{I}}$ is asymptotic to a closed
self-dual 2-form $\beta^C$ on $C$ whose cohomology class lies in the image of $j^N_2:H^2(\hat{N})\rightarrow H^2(\Sigma)\cong H^2(C)$, we can 
find a closed self-dual 2-form $\gamma$ on $\hat{N}$ which is also asymptotic to $\beta^C$.  We then interpolate between $\beta$ and $\gamma$ to define 
a self-dual 2-form on $\tilde{N}(t)$ which is ``almost'' closed, i.e.~it is closed except on some small compact set.  

By the general theory in \cite{LockhartMcOwen}, as discussed in the particular case of interest in \cite{LotayAC}, for
 each $i=1,\ldots,m_{\mathcal{I}}$ there exists a closed self-dual 2-form $\beta^{C}_{i}$ on $C$, which is homogeneous of order $O(r^{-2})$, such
 that, for some $\epsilon_0>0$,
$$\big|\nabla^j_C\big(\Phi_A^*\beta^\mathcal{I}_i(r,\sigma)-\beta^C_i(r,\sigma)\big)\big|=O(r^{-2-\epsilon_0-j})\quad\text{as }r\rightarrow\infty\text{ for all }j\in\N.$$  
By Proposition \ref{alphaSigmaprop}, there exist closed self-dual 2-forms $\gamma^\mathcal{I}_i$ on 
$\hat{N}$ so that, for some other $\epsilon_0>0$, 
$$\big|\nabla^j_C\big(\Psi_N^*\gamma^\mathcal{I}_i(r,\sigma)-\beta^C_i(r,\sigma)\big)\big|=O(r^{-2+\epsilon_0-j})\quad\text{as }r\rightarrow0\text{ for all }j\in\N.$$

Define a diffeomorphism $\Psi_{C,t}:(tR,\epsilon)\times\Sigma\rightarrow \Upsilon_C(\Gamma_{\alpha_C(t)})$ by 
\begin{equation}\label{PsiCteq}
\Psi_{C,t}(r,\sigma)=\Upsilon_C\big(r,\sigma,\alpha_C(t)(r,\sigma)\big).
\end{equation}
We can now define a self-dual 2-form $\xi_i^{\mathcal{I}}$ on $\tilde{N}(t)$, for $i=1,\ldots,m_{\mathcal{I}}$, so that on $\chi(tK_A)$ it equals
 $\beta^{\mathcal{I}}_i$ (using the identification $\Psi_{A,t}$), on $\Upsilon_N(\Gamma_{t\alpha_N^0|_{K_N}})$ it equals $\gamma^{\mathcal{I}}_i$ (using the identification $\Psi_{N,t}$), and on $\Upsilon_C(\Gamma_{\alpha_C(t)})$ it interpolates between 
 these definitions in the following way:
$$\Psi_{C,t}^*\xi_i^{\mathcal{I}}(r,\sigma)=\big(1-f_{\text{inc}}(2t^{-\nu}r-1)\big)\Psi_{A,t}^*\beta^\mathcal{I}_i(r,\sigma)+
f_{\text{inc}}(2t^{-\nu}r-1)\Psi_{N,t}^*\gamma^{\mathcal{I}}_i(r,\sigma),$$
where $f_{\text{inc}}$ is given in Definition \ref{tildeNdfn}.  
Notice that $\xi_i^{\mathcal{I}}$ is closed except on the region where $tA$ is connected to $\hat{N}$ to form $\tilde{N}(t)$.

\medskip

We now turn to $\beta\in\mathcal{K}^{\mathcal{O}}$ which do \emph{not} define approximate kernel forms.  We cut off $\beta$
to define a self-dual 2-form $\xi$ on $\tilde{N}(t)$ which vanishes on $\hat{N}(t)$ and is closed except on a small compact set so that, 
as $t\rightarrow 0$, $\xi$ converges back to $\beta$ on $A$, after re-scaling.  Although $\xi$ will again be ``almost'' closed, this time there is 
no corresponding closed self-dual 2-form on $\tilde{N}(t)$ which it approximates.  Moreover, since $\xi$ converges to a non-trivial closed form as 
$t\rightarrow0$ it is clear that $\xi$ will contribute to the blow up of the Sobolev embedding constant as $t\rightarrow 0$.

 Define $f^{\mathcal{O}}: A\rightarrow [0,1]$ to be a smooth function such that
$$f^{\mathcal{O}}(x)=1\text{ for }x\in K_A,\quad\supp\d f^{\mathcal{O}}\subseteq \Phi_A\big((\textstyle\frac{1}{2}t^{\nu-1},t^{\nu-1})\times\Sigma\big)$$ and $f^{\mathcal{O}}$ is decreasing in $r$ 
on $\Imm\Phi_A$.  
For $i=1,\ldots,m_{\mathcal{O}}$ there exists a smooth self-dual 2-form
 $\xi^{\mathcal{O}}_i$ on $\tilde{N}(t)$ such that
$$\Psi_{A,t}^*(\xi^{\mathcal{O}}_i)=\beta^{\mathcal{O}}_i\text{ on }tK_A\cup t\Phi_A\big((R,\textstyle\frac{1}{2}t^{\nu-1})\times\Sigma\big)$$
and
$$\supp\xi^{\mathcal{O}}_i\subseteq \Psi_{A,t}\big(tK_A\sqcup t\Phi_A\big((R,t^{\nu})\times\Sigma\big)\big),$$
again using the identifications as before.  We can achieve this essentially by cutting off the form
 $\beta^{\mathcal{O}}_i$ using $f^{\mathcal{O}}$.  Notice that $\xi^\mathcal{O}_i$ is closed except on the interpolation region between 
$tA$ and $N$ in $\tilde{N}(t)$. 

\begin{dfn}\label{KIKOdfn} 
Let $\tilde{\mathcal{K}}^{\mathcal{I}}(t)=\{\xi_1^\mathcal{I},\ldots,\xi_{m_\mathcal{I}}^{\mathcal{I}}\}$ and let 
$\tilde{\mathcal{K}}^{\mathcal{O}}(t)=\{\xi_1^\mathcal{O},\ldots,\xi_{m_\mathcal{O}}^{\mathcal{O}}\}$.
\end{dfn}

Combining Definitions \ref{apKerdfn1}, \ref{apKerdfn2} and \ref{KIKOdfn} leads to our approximate kernel.

\begin{dfn}\label{apKerdfn}
Let $\tilde{\mathcal{K}}_{\text{ap}}(t)=\tilde{\mathcal{K}}_{\text{ap}}^A(t)\oplus\tilde{\mathcal{K}}_{\text{ap}}^N(t)
\oplus\tilde{\mathcal{K}}^{\mathcal{I}}(t)$.  
\end{dfn}

\noindent Observe that, by construction, the sums in $\tilde{\mathcal{K}}_{\text{ap}}(t)$ are direct and therefore that 
$$\dim\tilde{\mathcal{K}}_{\text{ap}}(t)=b^2_+(A)+b^2_+(\hat{N})+\dim(\Imm j^A_2\cap \Imm j^N_2)=b^2_+\big(\tilde{N}(t)\big),$$ 
using \eq{dimKIeq} and Theorem \ref{b2+thm}.  

We may now state our Sobolev embedding inequality.

\begin{thm}\label{apKerthm}  Recall $\nu$ given in \eq{nueq}. 
There is a constant $C(\tilde{N})$, independent of $t$, such that if $\alpha\in L^2_{4}\big(\Lambda^2_+T^*\tilde{N}(t)\big)$ satisfies
$\langle\alpha,\xi\rangle_{L^2}=0$ for all $\xi\in\tilde{K}_{\text{\emph{ap}}}(t)$ then
$$\|\alpha\|_{L^2_{4,-2+\delta,t}}\leq C(\tilde{N})t^{-\delta(1-\nu)}\|\d\alpha\|_{L^2_{3,-3+\delta,t}}.$$
\end{thm}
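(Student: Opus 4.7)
The plan is to argue by contradiction, in the spirit of the gluing estimates in \cite{Lotaydesing} and \cite{Pacinidesing}. Suppose no such $C(\tilde{N})$ exists; then there are sequences $t_n\to 0$ and $\alpha_n\in L^2_{4,-2+\delta,t_n}(\Lambda^2_+T^*\tilde{N}(t_n))$ with $\alpha_n\perp\tilde{\mathcal{K}}_{\text{ap}}(t_n)$ in $L^2$, $\|\alpha_n\|_{L^2_{4,-2+\delta,t_n}}=1$, but $t_n^{-\delta(1-\nu)}\|\d\alpha_n\|_{L^2_{3,-3+\delta,t_n}}\to 0$. The goal is to use the uniform estimates of Propositions \ref{AapKer} and \ref{hatNapKer} on the building blocks, together with a careful accounting of the $\mathcal{K}^{\mathcal{O}}$-direction (which is \emph{not} in the approximate kernel), to reach a contradiction.

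First I would introduce a smooth partition of unity $\chi^A+\chi^N\equiv 1$ on $\tilde{N}(t)$, with $\chi^A$ supported in the image of $t\hat{A}(t)$ under $\Psi_{A,t}$ and equal to $1$ on $r\leq\tfrac{1}{2}t^\nu$, and $\chi^N$ supported in the image of $\hat{N}(t)$ under $\Psi_{N,t}$ and equal to $1$ on $r\geq t^\nu$, the transition occurring in the interpolation annulus where $|\d\chi^A|,|\d\chi^N|=O(t^{-\nu})$. Pulling back $\chi^A\alpha_n$ via $\Psi_{A,t_n}$ and rescaling yields a compactly supported form $\hat{\alpha}^A_n$ on $A$, and similarly $\chi^N\alpha_n$ pulls back to $\hat{\alpha}^N_n$ on $\hat{N}$. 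Dilation equivariance of the weighted norms (the very reason for Definition \ref{wspacedfn}; see \cite{Pacinidesing}) gives uniform bounds $\|\hat{\alpha}^A_n\|_{L^2_{4,-2+\delta}(A)},\|\hat{\alpha}^N_n\|_{L^2_{4,-2+\delta}(\hat{N})}\leq C$, while their exterior derivatives are controlled by $\|\d\alpha_n\|_{L^2_{3,-3+\delta,t_n}}$ plus the commutators $[\d,\chi^A]\alpha_n,[\d,\chi^N]\alpha_n$ living on the interpolation annulus.

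Because $\tilde{\mathcal{K}}^A_{\text{ap}}(t)$ and $\tilde{\mathcal{K}}^N_{\text{ap}}(t)$ are supported, by construction, where $\chi^A\equiv 1$ and $\chi^N\equiv 1$ respectively, the hypothesis $\alpha_n\perp\tilde{\mathcal{K}}_{\text{ap}}(t_n)$ translates into $\hat{\alpha}^A_n\perp\mathcal{K}^A_{\text{ap}}$ and $\hat{\alpha}^N_n\perp\mathcal{K}^N_{\text{ap}}$ exactly, and $\hat{\alpha}^A_n\perp\mathcal{K}^{\mathcal{I}}\subseteq\mathcal{K}^A_0$ up to $O(t_n^{\epsilon_0})$-errors coming from the interpolation in the construction of the $\xi^{\mathcal{I}}_i$. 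Writing $\hat{\alpha}^A_n=\sum_ic_{n,i}\beta^{\mathcal{O}}_i+\hat{\alpha}^{A,\perp}_n$ with the sum the $L^2(A)$-projection onto $\mathcal{K}^{\mathcal{O}}$, the remainder $\hat{\alpha}^{A,\perp}_n$ is orthogonal to $\mathcal{K}^A_{\text{ap}}\oplus\mathcal{K}^A_0$ modulo small errors, so Proposition \ref{AapKer} gives $\|\hat{\alpha}^{A,\perp}_n\|\leq C(A)\|\d\hat{\alpha}^{A,\perp}_n\|$; similarly Proposition \ref{hatNapKer} gives $\|\hat{\alpha}^N_n\|\leq C(N)\|\d\hat{\alpha}^N_n\|$, with $t$-independent constants.

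The blow-up factor enters through the $\mathcal{K}^{\mathcal{O}}$-contribution, which is a direct computation. Since $\xi^{\mathcal{O}}_i\sim r^{-2}$ on the cone region out to $r=t^\nu$, the weighted integrand for $\|\xi^{\mathcal{O}}_i\|_{L^2_{4,-2+\delta,t}}^2$ becomes $r^{-1-2\delta}$ and yields $\|\xi^{\mathcal{O}}_i\|_{L^2_{4,-2+\delta,t}}\sim t^{-\delta}$, whereas $\d\xi^{\mathcal{O}}_i$ is supported only on the annulus $r\in[\tfrac{1}{2}t^\nu,t^\nu]$ where the cutoff derivative contributes a factor $t^{-\nu}$, so $\|\d\xi^{\mathcal{O}}_i\|_{L^2_{3,-3+\delta,t}}\sim t^{-\nu\delta}$. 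The equation $\d\alpha_n=\d\hat{\alpha}^{A,\perp}_n+\sum_ic_{n,i}\d\xi^{\mathcal{O}}_i+\d\hat{\alpha}^N_n+(\mathrm{commutators})$ together with linear independence of $\{\d\xi^{\mathcal{O}}_i\}$ on the annulus then forces $|c_{n,i}|=o(t_n^{\delta(1-\nu)}/t_n^{-\nu\delta})=o(t_n^\delta)$, so the contribution of $\sum_ic_{n,i}\xi^{\mathcal{O}}_i$ to $\|\alpha_n\|_{L^2_{4,-2+\delta,t_n}}$ is at most $o(t_n^\delta\cdot t_n^{-\delta})=o(1)$. Combined with the bounds on $\hat{\alpha}^{A,\perp}_n$ and $\hat{\alpha}^N_n$, this yields $\|\alpha_n\|_{L^2_{4,-2+\delta,t_n}}\to 0$, contradicting the normalisation. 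The main obstacle will be a careful control of all the cutoff and commutator errors; the lower bound \eq{nueq} on $\nu$ is exactly what guarantees these are subleading relative to $t^{-\delta(1-\nu)}$.
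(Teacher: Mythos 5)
Your overall strategy---contradiction with the normalisation $\|\alpha_n\|_{L^2_{4,-2+\delta,t_n}}=1$, reduction to the building blocks via Propositions \ref{AapKer} and \ref{hatNapKer}, and the explicit computation showing $\|\xi^{\mathcal{O}}_i\|_{L^2_{4,-2+\delta,t}}\sim t^{-\delta}$ while $\|\d\xi^{\mathcal{O}}_i\|_{L^2_{3,-3+\delta,t}}\sim t^{-\nu\delta}$---matches the paper, and that last computation is indeed the source of the factor $t^{-\delta(1-\nu)}$. But the way you pass from $\tilde{N}(t_n)$ to $A$ and $\hat{N}$ has a genuine gap. You cut $\alpha_n$ with a partition of unity $\chi^A+\chi^N\equiv 1$ and assert that the commutator terms $\d\chi^A\w\alpha_n$ are subleading thanks to \eq{nueq}. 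They are not: on the transition annulus $r\in[\frac{1}{2}t^{\nu},t^{\nu}]$ one has $|\d\chi^A|\sim\rho_t^{-1}$, which exactly compensates the shift of weight from $-2+\delta$ to $-3+\delta$, so $\|\d\chi^A\w\alpha_n\|_{L^2_{3,-3+\delta,t_n}}$ is comparable to the $L^2_{3,-2+\delta,t_n}$ norm of $\alpha_n$ on the annulus---a quantity that is a priori $O(1)$, not $o(t_n^{\delta(1-\nu)})$ or even $o(1)$. (The constraint \eq{nueq} on $\nu$ is used in Proposition \ref{phiestprop}, not here, and buys you nothing for these terms.) Consequently the right-hand sides of your bounds $\|\hat{\alpha}^{A,\perp}_n\|\leq C(A)\|\d\hat{\alpha}^{A,\perp}_n\|$ and $\|\hat{\alpha}^{N}_n\|\leq C(N)\|\d\hat{\alpha}^{N}_n\|$ are not controlled, and the same problem infects the step where you extract $|c_{n,i}|=o(t_n^{\delta})$ from ``linear independence of $\{\d\xi^{\mathcal{O}}_i\}$ on the annulus'': the other contributions to $\d\alpha_n$ there (the commutators and $\d\hat{\alpha}^{A,\perp}_n$) are neither known to be small nor quantitatively transverse to $\Span\{\d\xi^{\mathcal{O}}_i\}$, so nothing rules out cancellation.

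The missing ingredient is compactness. The paper restricts (rather than cuts off) $\alpha_n$ to the two overlapping pieces $t_n\hat{A}(t_n)$ and $\hat{N}(t_n)$, so that $\d$ commutes with the pullbacks and no commutator terms arise, and then invokes the compact embedding theorem for weighted Sobolev spaces \cite[Theorem 4.9]{Lockhart} to extract limits $\xi^A$, $\xi^N$, which are closed; orthogonality to $\tilde{\mathcal{K}}_{\text{ap}}(t_n)$ forces $\xi^N=0$ and $\xi^A\in\mathcal{K}^{\mathcal{O}}$, so that for large $n$ the form $\alpha_n$ is well approximated by an element of $\tilde{\mathcal{K}}^{\mathcal{O}}(t_n)$, on which your (correct) ratio computation produces the contradiction. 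It is precisely this limiting step that controls the mass of $\alpha_n$ on the neck, i.e.\ the terms your direct decomposition cannot estimate. A smaller point: the ``$L^2(A)$-projection onto $\mathcal{K}^{\mathcal{O}}$'' is not defined, since elements of $\mathcal{K}^A_0$ decay like $r^{-2}$ and hence do not lie in $L^2$; you would need to project in $L^2_{0,-2+\delta}$ or fix a complement by hand.
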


\begin{note}
If we could choose $\delta=0$, i.e.~if $-2\notin\mathcal{D}$ which is equivalent to $b^1(\Sigma)=0$, then we have a uniform 
Sobolev embedding constant independent of $t$.  This tallies with the work in \cite{Lotaydesing}.
\end{note}

\begin{proof}
The idea of the proof is first to show that the only way that the Sobolev embedding constant can blow up as $t\rightarrow 0$ is if we have a sequence converging 
to a closed self-dual 2-form on $A$ or $N$.  The construction of $\tilde{\mathcal{K}}_{\text{ap}}(t)$ means that the only non-trivial limit that can occur is 
an element of $\mathcal{K}^{\mathcal{O}}$.  We can therefore just study the behaviour of forms in 
$\tilde{\mathcal{K}}^{\mathcal{O}}(t)$ as $t\rightarrow 0$ to deduce our estimate. 

Suppose, for a contradication, that there exists a decreasing sequence of positive numbers $t_n\rightarrow 0$ 
and a sequence $$\alpha_n\in L^2_{4,-2+\delta,t_n}\big(\Lambda^2_+T^*\tilde{N}(t_n)\big)$$ such that 
 $\langle\alpha_n,\xi\rangle_{L^2}=0$ for all $\xi\in\tilde{\mathcal{K}}_{\text{ap}}(t_n)$ and 
\begin{equation}\label{falphaeq7}
\|\alpha_n\|_{L^2_{4,-2+\delta,t_n}}=1\geq nt_n^{-\delta(1-\nu)}\|\d\alpha_n\|_{L^2_{3,-3+\delta,t_n}}.
\end{equation}
Therefore, the sequences $$\Psi_{A,t_n}^*\alpha_n\in L^2_{4,-2+\delta}(\Lambda^2_+T^*A)\quad\text{and}\quad
\Psi_{N,t_n}^*\alpha_n\in L^2_{4,-2+\delta}(\Lambda^2_+T^*\hat{N})$$ are bounded.  So, by the compact embedding theorem for weighted Sobolev spaces \cite[Theorem 4.9]{Lockhart}, after passing to a subsequence, both
 sequences converge in $L^2_{3,-2+\delta^{\prime}}$, to $\xi^A$ and $\xi^N$ say, where $\delta^{\prime}>\delta$ for $\xi^A$ and $\delta^{\prime}<\delta$ for 
$\xi^N$.  
 
Using the bounds for $\|\d\alpha_n\|_{L^2_{3,-3+\delta,t_n}}$ we see that 
$$\|\d(\Psi_{A,t_n}^*\alpha_n)\|_{L^2_{2,-3+\delta^{\prime}}}\leq \|\d(\Psi_{A,t_n}^*\alpha_n)\|_{L^2_{3,-3+\delta}}\rightarrow 0\quad\text{as }n\rightarrow\infty.$$  Hence $\d\xi^A=0$ and similarly $\d\xi^N=0$.  
Elliptic regularity implies that $\xi^A$ and $\xi^N$ are smooth and, as $-2+\delta\notin\mathcal{D}$, the work in \cite{LockhartMcOwen} shows that the space of closed self-dual 2-forms 
is the same at rates $-2+\delta$ and $-2+\delta^{\prime}$ if $\delta^{\prime}$ is sufficiently close to $\delta$.  We conclude that $\Psi_{A,t_n}^*\alpha_n$ and $\Psi_{N,t_n}^*\alpha_n$ converge in $L^2_{4,-2+\delta}$ to $\xi^A$ and $\xi^N$ respectively.    

The fact that 
$\alpha_n$ is $L^2$-orthogonal to $\tilde{\mathcal{K}}_{\text{ap}}(t_n)$ means that $\xi^A$ is $L^2$-orthogonal to 
$\mathcal{K}_{\text{ap}}^A\oplus\mathcal{K}^{\mathcal{I}}$ and $\xi^N$ is $L^2$-orthogonal to $\mathcal{K}_{\text{ap}}^N$.  
From Proposition \ref{hatNapKer} we deduce that $\xi^N=0$.  If $\xi^A$ is $L^2$-orthogonal to $\mathcal{K}^{\mathcal{O}}$ 
then $\xi^A=0$ by Proposition \ref{AapKer}, so we must have that $\xi^A\in\mathcal{K}^{\mathcal{O}}$.  

Overall 
$\alpha_n$ is a sequence of forms such that $\Psi_{N,t_n}^*\alpha_n\rightarrow 0$ and $\Psi_{A,t_n}^*\alpha_n\rightarrow
\xi^A\in\mathcal{K}^{\mathcal{O}}$.  Thus, for all $n$ sufficiently large, $\alpha_n$ is well approximated by elements in 
$\tilde{\mathcal{K}}^{\mathcal{O}}(t_n)$, so we now analyse these forms.

By definition, elements of $\mathcal{K}^{\mathcal{O}}$ 
are kernel forms on $A$ which do \emph{not} extend to corresponding kernel forms on $N$ and thus do not define kernel forms on $\tilde{N}(t)$.  Hence,
 $\tilde{\mathcal{K}}^{\mathcal{O}}(t)$ must be transverse to the closed self-dual 2-forms on $\tilde{N}(t)$ for $\tau$ small.  
Therefore there exist some ($t$-dependent) constants $C_t(\tilde{N})>0$ such that, for all $\alpha\in\tilde{\mathcal{K}}^{\mathcal{O}}(t)$, 
$$\|\alpha\|_{L^2_{4,-2+\delta,t}}\leq C_t(\tilde{N})\|\d\alpha\|_{L^2_{3,-3+\delta,t}}.$$

Recall Definition \ref{KIKOdfn} and the discussion preceding it.  Any $\alpha\in\tilde{\mathcal{K}}^{\mathcal{O}}(t)$ is 
identified with $f^{\mathcal{O}}\beta$ for some 
$\beta\in\mathcal{K}^{\mathcal{O}}$.  Using the definition of $f^{\mathcal{O}}$, the facts that $\d\beta=0$ and satisfies $|\nabla^j_C\Phi_A^*\beta|=O(r^{-2-j})$ for all $j\in\N$ as $r\rightarrow\infty$, together with the assumption that $\delta<1$ in \eq{deltaeq} we calculate
\begin{align*}
\|\d(f^{\mathcal{O}}\beta)\|^2_{L^2_{3,-3+\delta}}&=\|\d f^{\mathcal{O}}\w\beta\|^2_{L^2_{3,-3+\delta}}\\
&=\sum_{j=0}^3\int_{\Phi_A\big((\frac{1}{2}t^{\nu-1},t^{\nu-1})\times\Sigma\big)}|r^{j+3-\delta}\nabla^j(\d f^{\mathcal{O}}\w\beta)|^2r^{-4}\d\vol_{g_0|_A}\\
&=O\left(\int_{\frac{1}{2}t^{\nu-1}}^{t^{\nu-1}}|r^{3-\delta}t^{1-\nu}r^{-2}|^2 r^{-1}\d r\right)\\
&=O\big(t^{2\delta(1-\nu)}\big).
\end{align*}
Notice that this norm tends to zero as $t\rightarrow 0$ as we would expect.  We deduce that 
$$\|\alpha\|_{L^2_{4,-2+\delta,t}}\leq Ct^{-\delta(1-\nu)}\|\d\alpha\|_{L^2_{3,-3+\delta,t}}$$
for some constant $C>0$.

Hence, for $n>>1$, there exists some other constant $C>0$ so that
$$\|\alpha_n\|_{L^2_{4,-2+\delta,t_n}}=1\leq Ct_n^{-\delta(1-\nu)}\|\d\alpha_n\|_{L^2_{3,-3+\delta,t_n}}.$$
This contradicts \eq{falphaeq7}.
\end{proof}

Using our estimate we have the following crucial result.

\begin{thm}\label{invthm} Recall Definition \ref{apKerdfn}.  The exterior derivative
\begin{align*}
\d:\tilde{\mathcal{K}}_{\text{\emph{ap}}}(t)^{\perp}\subseteq L^2_{4,-2+\delta,t}&\big(\Lambda^2_+T^*\tilde{N}(t)\big)\\
&\longrightarrow \big\{\xi\in L^2_{3,-3+\delta,t}\big(\Lambda^3T^*\tilde{N}(t)\big):\xi\,\text{is exact}\big\}
\end{align*}
is a bounded invertible linear map between Banach spaces with bounded linear inverse $P_{\d}$ satisfying 
$\|P_{\d}\|\leq C(\tilde{N})t^{-\delta(1-\nu)}$.
\end{thm}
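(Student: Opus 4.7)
The plan is to verify boundedness, injectivity, surjectivity, and the norm bound on $P_{\d}$ separately. Boundedness of $\d$ is a routine consequence of the definitions of the weighted norms, since $\d$ is first-order and drops the weight by one. Injectivity follows at once from Theorem \ref{apKerthm}: if $\alpha \in \tilde{\mathcal{K}}_{\text{ap}}(t)^\perp$ satisfies $\d\alpha = 0$, the inequality there yields $\|\alpha\|_{L^2_{4,-2+\delta,t}} \leq C(\tilde{N})t^{-\delta(1-\nu)}\|\d\alpha\|_{L^2_{3,-3+\delta,t}} = 0$, and the same inequality, applied to $\alpha = P_{\d}\xi$, gives the stated operator norm estimate for $P_{\d}$.

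The substantial step is surjectivity. The key point is that $\tilde{N}(t)$ is a closed 4-manifold, so classical Hodge theory applies and, for each fixed $t$, the weighted Sobolev norms are equivalent to the ordinary $L^2_k$ norms (the weight $\rho_t$ is a smooth positive function bounded above and away from zero on compact $\tilde{N}(t)$). First I would establish the general fact that on a closed oriented 4-manifold $X$ the map $\d:\Lambda^2_+T^*X\to\Lambda^3T^*X$ has image equal to the exact 3-forms. This follows by studying the elliptic operator $\d+\d^{*}:\Lambda^2_+T^*X\oplus\Lambda^4T^*X\to\Lambda^3T^*X$: an integration-by-parts calculation using $\eta\wedge\eta=-|\eta|^2\,\mathrm{vol}$ for anti-self-dual $2$-forms $\eta$ identifies its $L^2$-cokernel with $\mathcal{H}^3(X)$, and hence any exact $\xi$ has the form $\d\alpha+\d^{*}f$ for some $(\alpha,f)\in\Lambda^2_+T^*X\oplus\Lambda^4T^*X$; since $\d^{*}f$ is then simultaneously exact and coexact, the Hodge decomposition forces $\d^{*}f=0$ and so $\d\alpha=\xi$. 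Applied with elliptic regularity to an exact $\xi\in L^2_{3,-3+\delta,t}\big(\Lambda^3T^*\tilde{N}(t)\big)$, this produces some $\alpha_0\in L^2_{4,-2+\delta,t}\big(\Lambda^2_+T^*\tilde{N}(t)\big)$ with $\d\alpha_0=\xi$.

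Finally I would adjust $\alpha_0$ so that it lies in $\tilde{\mathcal{K}}_{\text{ap}}(t)^\perp$, using the freedom of adding harmonic self-dual $2$-forms $h\in\mathcal{H}^2_+\big(\tilde{N}(t)\big)$, which do not alter $\d\alpha_0$. By Theorem \ref{b2+thm} and the dimension identity immediately after Definition \ref{apKerdfn}, $\dim\mathcal{H}^2_+\big(\tilde{N}(t)\big)=b^2_+\big(\tilde{N}(t)\big)=\dim\tilde{\mathcal{K}}_{\text{ap}}(t)$. The $L^2$-pairing $\mathcal{H}^2_+\big(\tilde{N}(t)\big)\times\tilde{\mathcal{K}}_{\text{ap}}(t)\to\mathbb{R}$ is nondegenerate, since any $h\in\mathcal{H}^2_+\big(\tilde{N}(t)\big)$ orthogonal to $\tilde{\mathcal{K}}_{\text{ap}}(t)$ lies in $\tilde{\mathcal{K}}_{\text{ap}}(t)^\perp$ and is $\d$-closed, so vanishes by the injectivity just proved. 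Dimensional matching then yields a unique $h$ making $\alpha=\alpha_0+h\in\tilde{\mathcal{K}}_{\text{ap}}(t)^\perp$, and $\d\alpha=\xi$ by construction. The principal delicacy is the surjectivity step: once one has the compact $4$-manifold lemma $\d\Lambda^2_+=\Lambda^3_{\text{exact}}$, everything else reduces to the dimension count supplied by Theorem \ref{b2+thm} and a direct appeal to Theorem \ref{apKerthm}.
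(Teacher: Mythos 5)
Your proposal is correct and follows essentially the same route as the paper. Where the paper simply cites an external result (Proposition 2.10 of \cite{Lotaydesing}) for the fact that $\d(\Lambda^2_+T^*X)$ exhausts the exact $3$-forms on a compact Riemannian $4$-manifold and phrases the final step as the $L^2$-orthogonal projection of $\mathcal{H}^2_+\big(\tilde{N}(t)\big)$ onto $\tilde{\mathcal{K}}_{\text{ap}}(t)$ being an isomorphism, you prove that fact from scratch via $\d+\d^*$ on $\Lambda^2_+\oplus\Lambda^4$ and phrase the adjustment step as nondegeneracy of the $L^2$-pairing $\mathcal{H}^2_+\times\tilde{\mathcal{K}}_{\text{ap}}(t)\to\R$; these are the same arguments stated in more detail.
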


\begin{proof}  Recall that $\Imm\d|_{\Lambda^2_+}=\Imm\d|_{\Lambda^2}$  
on compact Riemannian 4-manifolds (c.f.~\cite[Proposition 2.10]{Lotaydesing}).  An immediate consequence of Theorem \ref{apKerthm} is that the $L^2$-orthogonal projection of $\mathcal{H}^2_+\big(\tilde{N}(t)\big)$, given in Definition \ref{H2+dfn},  
to $\tilde{\mathcal{K}}_{\text{ap}}(t)$ is injective.  It is also surjective since the dimensions of the two spaces are equal by Theorem \ref{b2+thm}.  

 The domain and range of $\d$ given are clearly Banach spaces and the existence of the 
bounded inverse $P_{\d}$ is now clear.  The bound on the operator norm of $P_{\d}$ is simply a restatement of the estimate in Theorem \ref{apKerthm}.
\end{proof}

\subsection{The contraction map}

Recall the tubular neighbourhood constructions for $C$, $A$ and $\hat{N}$ in Propositions \ref{Cnbdprop}-\ref{NAnbdprop} which identified nearby deformations 
with graphs of self-dual 2-forms.  Using the isomorphism
 $\nu(\tilde{N}(t))\cong\Lambda^2_+T^*\tilde{N}(t)$ given by Proposition \ref{metricprop}, a straightforward adaptation of the 
work in \cite[$\S$5.1-5.2]{Lotaydesing} shows that we can construct a tubular neighbourhood $\tilde{T}(t)$ of $\tilde{N}(t)$ in $M$ which is identified with the
 $\tilde{\epsilon}$-ball about the zero section in $C^1_{1,t}\big(\Lambda^2_+T^*\tilde{N}(t)\big)$ for some $\tilde{\epsilon}>0$, in a manner which is compatible 
with the constructions in Propositions \ref{Cnbdprop}-\ref{NAnbdprop}.  Rather than repeating the details here we refer the interested reader to 
\cite[$\S$5.1-5.2]{Lotaydesing}.  We can use this construction to describe coassociative deformations of $\tilde{N}(t)$. 

\begin{dfn}\label{defmapdfn} Let $\alpha\in C^1\big(\Lambda^2_+T^*\tilde{N}(t)\big)$ with
 $\|\alpha\|_{C^1_{1,t}}<\tilde{\epsilon}$.  Using the construction 
discussed above, we can define a nearby deformation $\tilde{N}_{\alpha}(t)\subseteq \tilde{T}(t)$ of 
$\tilde{N}(t)$ with a natural diffeomorphism $f_{\alpha}(t):\tilde{N}(t)\rightarrow \tilde{N}_{\alpha}(t)$.  Let
\begin{equation*}
F_t(\alpha)=f_{\alpha}(t)^*\left(\varphi|_{\tilde{N}_{\alpha}(t)}\right).
\end{equation*}
By the coassociativity of $N$ and $A$, $F_t(\alpha)$ is exact.  
\end{dfn}

\noindent By construction, the zeros of $F_t$
 correspond exactly to nearby coassociative deformations of $\tilde{N}(t)$.

As in \cite[Proposition 6.2]{Lotaydesing}, we can say more about the deformation map $F_t$.

\begin{prop}\label{defmapprop}
For $\alpha\in C^1\big(\Lambda^2_+T^*\tilde{N}(t)\big)$ with
 $\|\alpha\|_{C^1_{1,t}}<\tilde{\epsilon}$, we may write
\begin{equation*}
F_t(\alpha)=\varphi|_{\tilde{N}(t)}+\d\alpha +Q_{t}(\alpha)
\end{equation*}
for a smooth map $Q_t$ depending on $\alpha$ and $\nabla\alpha$.  
Moreover, if $\alpha\in L^{2}_{4}\big(\Lambda^2_+T^*\tilde{N}(t)\big)$ with $\|\alpha\|_{C^1_{1,t}}<\tilde{\epsilon}$,
 then $$Q_{t}(\alpha)\in \d\big(L^2_{4}\big(\Lambda^2_+T^*\tilde{N}(t)\big)\big) \subseteq L^{2}_{3}\big(\Lambda^3T^*\tilde{N}(t)\big).$$ 
\end{prop}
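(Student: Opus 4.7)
My plan is to establish this Taylor expansion by adapting the standard McLean calculation used in the proof of Proposition \ref{infdefprop} and in \cite[Proposition 6.2]{Lotaydesing}. As in $\S$5.1--5.2 of \cite{Lotaydesing}, the tubular neighbourhood $\tilde T(t)$ is the image, under a smooth diffeomorphism $\Upsilon_{\tilde N(t)}$ compatible with $\jmath_t$ at $\tilde N(t)$, of the $\tilde\epsilon$-ball about the zero section in $C^1_{1,t}\big(\Lambda^2_+T^*\tilde N(t)\big)$. For $\alpha$ small in this norm, $\tilde N_\alpha(t)$ is the image of the graph $\Gamma_\alpha$, and $f_\alpha(t)$ is the composition of the zero-section inclusion with $\Upsilon_{\tilde N(t)}|_{\Gamma_\alpha}$.

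Pulling $\varphi$ back through this graph parameterisation gives a pointwise expression of the form $F_t(\alpha)(x) = \Phi(x, \alpha(x), \nabla\alpha(x))$ for some smooth map $\Phi$ depending on $t$. Taylor-expanding in the last two slots about $(0,0)$ yields
$$F_t(\alpha) = \varphi|_{\tilde N(t)} + L(\alpha, \nabla\alpha) + Q_t(\alpha),$$
where $L$ is linear and $Q_t(\alpha)$ vanishes to second order. To identify $L(\alpha, \nabla\alpha)$ with $d\alpha$, I extend the normal vector field $v = \jmath_t^{-1}(\alpha)$ into $\tilde T(t)$ via the graph flow and apply Cartan's formula:
$$L(\alpha,\nabla\alpha) = \mathcal{L}_v\varphi|_{\tilde N(t)} = \bigl(d(v\lrcorner\varphi) + v\lrcorner d\varphi\bigr)|_{\tilde N(t)} = d\bigl((v\lrcorner\varphi)|_{\tilde N(t)}\bigr) = d\alpha,$$
using $d\varphi = 0$ (since $M$ is almost $\GG_2$), the compatibility of $\Upsilon_{\tilde N(t)}$ with $\jmath_t$, and the commutation of pull-back with $d$. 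This is precisely McLean's computation, as already exploited for \eqref{phiexpandeq}. Since $\Phi$ depends only pointwise on $(\alpha, \nabla\alpha)$, so does $Q_t$.

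For the exactness statement, note that $F_t(\alpha)$ is exact by Definition \ref{defmapdfn}; taking $\alpha = 0$ gives exactness of $F_t(0) = \varphi|_{\tilde N(t)}$, and $d\alpha$ is trivially exact, so $Q_t(\alpha) = F_t(\alpha) - \varphi|_{\tilde N(t)} - d\alpha$ is exact on the compact 4-manifold $\tilde N(t)$. To deduce that $Q_t(\alpha) \in d\bigl(L^2_4(\Lambda^2_+T^*\tilde N(t))\bigr)$, I use that $d\bigl(L^2_4(\Lambda^2 T^*\tilde N(t))\bigr) = d\bigl(L^2_4(\Lambda^2_+T^*\tilde N(t))\bigr)$ on a compact oriented Riemannian 4-manifold (cf.\ \cite[Proposition 2.10]{Lotaydesing}, as invoked in Theorem \ref{invthm}), together with the pointwise bound $|Q_t(\alpha)| = O\bigl(|\alpha|^2 + |\nabla\alpha|^2\bigr)$ and the embedding $L^2_4 \hookrightarrow C^1$ on a compact 4-manifold, which ensure $Q_t(\alpha) \in L^2_3$.

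The main technical obstacle is verifying that $\Phi$ genuinely depends only on $\alpha$ and its first derivative, not higher ones. This is a consequence of the first-order nature of the coassociativity equation $\varphi|_{X_\alpha} = 0$: tangent vectors to $\Gamma_\alpha$ involve only $\alpha$ and $\nabla\alpha$, so the pulled-back 3-form cannot contain higher derivatives. Once this is established, Nemytski-type composition estimates for smooth $\Phi$, combined with the Sobolev multiplication $L^2_4 \cdot L^2_4 \hookrightarrow L^2_3$ on a compact 4-manifold, complete the regularity argument.
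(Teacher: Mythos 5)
Your proposal follows the same route the paper implicitly takes (the paper gives no proof in-text; it defers to \cite[Proposition 6.2]{Lotaydesing}): pointwise dependence of $F_t(\alpha)$ on $(x,\alpha,\nabla\alpha)$, Taylor expansion, identification of the linear term as $\d\alpha$ via the Lie-derivative/Cartan calculation, exactness from Definition \ref{defmapdfn} and from $F_t(0)=\varphi|_{\tilde N(t)}$, and the Sobolev/Hodge-theoretic upgrade to $Q_t(\alpha)\in\d\big(L^2_4(\Lambda^2_+T^*\tilde N(t))\big)$. This is the correct outline and matches the paper's intent.

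One point you treat too quickly deserves a flag, because it is precisely what is delicate about working with the non-coassociative $\tilde N(t)$. In your Cartan-formula step you take $v=\jmath_t^{-1}(\alpha)$ and compute $\mathcal L_v\varphi|_{\tilde N(t)}=\d\alpha$. But the vector field that generates the one-parameter family $\tilde N_{s\alpha}(t)$ through the tubular neighbourhood map is $V_0=\mathcal A(\alpha)+\jmath_t^{-1}(\alpha)$, with $\mathcal A$ the tangential block of $\d\Upsilon_{\tilde N(t)}|_{\tilde N(t)}$. This yields $L(\alpha,\nabla\alpha)=\d\big(\alpha+\mathcal A(\alpha)\lrcorner(\varphi|_{\tilde N(t)})\big)$. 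For a genuine coassociative 4-fold the second piece vanishes identically because $\varphi$ restricts to zero, which is why McLean's computation and \eqref{phiexpandeq} give exactly $\d\alpha$ there. On $\tilde N(t)$ this term need not vanish; it is linear in $\alpha$ (scaled by the small quantity $\varphi|_{\tilde N(t)}$), so it cannot simply be swept into a $Q_t$ that is later required to satisfy the purely quadratic estimate of Proposition \ref{Qestprop}. Appealing to ``compatibility with $\jmath_t$'' does not resolve this, since compatibility allows $\mathcal A$ to be arbitrary. The construction must therefore either arrange $\mathcal A=0$ when building $\Upsilon_{\tilde N(t)}$ (e.g.\ by a normal exponential map), or else explicitly absorb the resulting linear correction; either way this should be stated, since it is the one place where the non-coassociativity of $\tilde N(t)$ changes the McLean computation.

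With that caveat made explicit, the remainder of your argument — exactness of $Q_t(\alpha)$, the equality $\d\big(L^2_4(\Lambda^2)\big)=\d\big(L^2_4(\Lambda^2_+)\big)$ on the compact 4-manifold $\tilde N(t)$, and the Nemytskii/Sobolev-multiplication bookkeeping giving $Q_t(\alpha)\in L^2_3$ when $\alpha\in L^2_4$ — is sound and in line with the cited earlier work.
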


From Proposition \ref{defmapprop} we see that solving $F_t(\alpha)=0$ is equivalent to solving 
\begin{equation*}
\d\alpha=-\varphi|_{\tilde{N}(t)}-Q_t(\alpha).
\end{equation*}  
Since the right-hand side lies in $\d\big(L^2_{4}\big(\Lambda^2_+T^*\tilde{N}(t)\big)\big)$, we can use Theorem \ref{invthm}  and try to solve 
\begin{equation}\label{cmap1}
\alpha=P_{\d}\big(-\varphi|_{\tilde{N}(t)}-Q_t(\alpha)\big)
\end{equation}
for $\alpha\in\tilde{\mathcal{K}}_{\text{ap}}(t)^{\perp}\subseteq L^2_4\big(\Lambda^2_+T^*\tilde{N}(t)\big)$.  The idea is to apply the
 Contraction Mapping Theorem to give a solution to \eq{cmap1}, which will in turn define a zero of $F_t$ and hence a 
 coassociative deformation of $\tilde{N}(t)$.

\begin{dfn}\label{ctdfn}
For $\alpha\in\tilde{\mathcal{K}}_{\text{ap}}(t)^{\perp}\subseteq L^2_4\big(\Lambda^2_+T^*\tilde{N}(t)\big)$ with
 $\|\alpha\|_{C^1_{1,t}}<\tilde{\epsilon}$, define
\begin{equation*}
\mathcal{C}_t(\alpha)=P_{\d}\big(-\varphi|_{\tilde{N}(t)}-Q_t(\alpha)\big)\in \tilde{\mathcal{K}}_{\text{ap}}(t)^{\perp}\subseteq L^2_4\big(\Lambda^2_+T^*\tilde{N}(t)\big).
\end{equation*}
As observed, fixed points of $\mathcal{C}_t$ define elements of $\Ker F_t$.  Moreover, given a fixed point $\alpha$ of 
$\mathcal{C}_t$, we may apply the Implicit Function Theorem and parameterise the elements of $\Ker F_t$ near $\alpha$ by 
$\tilde{\mathcal{K}}_{\text{ap}}(t) \cong \mathcal{H}^2_+\big(\tilde{N}(t)\big)$.
\end{dfn}

Given the estimate on the norm of $P_{\d}$ in Theorem \ref{invthm}, to show that $\mathcal{C}_t$ is a contraction on some neighbourhood of zero in $L^2_{4,-2+\delta,t}$, 
it is enough to obtain estimates on the $L^2_{3,-3+\delta,t}$ norm of $\varphi|_{\tilde{N}(t)}$ and $Q_t(\alpha)-Q_t(\beta)$ for $\alpha,\beta\in L^2_4$.

We begin with the estimate on the norm of $\varphi|_{\tilde{N}(t)}$.

\begin{prop}\label{phiestprop} There exists a constant $C(\varphi)>0$, independent of $t$, such that
$$\|\varphi|_{\tilde{N}(t)}\|_{L^2_{3,-3+\delta,t}}\leq C(\varphi)t^{\nu(3-\delta)+2(1-\lambda)(1-\nu)}.$$
\end{prop}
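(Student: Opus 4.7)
The plan is to partition $\tilde{N}(t)$ following Definition~\ref{tildeNdfn} into four regions---(i) the core $\chi(tK_A)$, (ii) the inner cone piece $r\in(tR,\tfrac{1}{2} t^\nu)$ where $\alpha_C(t) = t^3\alpha_A(t^{-1}r,\sigma)$ so that $\tilde{N}(t)$ is a piece of $\chi(tA)$, (iii) the interpolation annulus $r\in[\tfrac{1}{2} t^\nu,t^\nu]$, and (iv) the outer piece $r\in(t^\nu,\epsilon)$ together with $\Upsilon_N(\Gamma_{\alpha_N^0(t)|_{K_N}})$---and bound $|\nabla^j(\varphi|_{\tilde{N}(t)})|$ pointwise on each before integrating against $\rho_t^{2-2\delta}\,\d\!\vol$.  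On each region I use the identity $\chi^*\varphi = \varphi_0 + (\chi^*\varphi-\varphi_0)$ with $|\chi^*\varphi-\varphi_0|_{g_0} = O(|x|)$ (forced by the normality of $\chi$ at $z$), together with the expansion $\varphi_0|_{\Upsilon_C(\Gamma_\alpha)} = \d\alpha + P_C(\alpha,\nabla_C\alpha)$ from~\eqref{phiexpandeq}.  The $j\geq 1$ estimates are of the same order as $j=0$ by scaling, since each $\nabla$ costs $\rho_t^{-1}$ which is exactly balanced by the weight $\rho_t^{j+3-\delta}$.

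On (i) and (ii) the graph coincides with $\chi(tA)\cap B(0,\epsilon_M)$ and $\varphi_0|_{tA}=0$, so pointwise $|\varphi|_{\tilde{N}(t)}|\leq C|x|=O(r)$ and the integrated contribution is $O(t^{\nu(4-\delta)})$, subdominant to the target by~\eqref{nueq} (after shrinking $\delta_0$ to at most $\tfrac{1}{2}$ if necessary).  On the two sub-pieces of (iv), $\tilde{N}(t)$ is the graph of $\alpha_N^0(t)=\sum_{i=1}^d t^{1-\lambda_i}\alpha_N^i$ over $\hat{N}$, each $\alpha_N^i$ is closed, and $\hat{N}$ is coassociative; the deformation formula on $\hat{N}$ therefore gives $\varphi|_{\tilde{N}(t)} = Q_N(\alpha_N^0(t),\nabla\alpha_N^0(t))$, which is quadratic in $\alpha_N^0(t)$, and using $\lambda_i\leq\lambda<-\tfrac{1}{2}$ together with the rate $\lambda_i$ of $\alpha_N^i$ near the cone vertex shows this contribution is also subdominant.

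Region (iii) is the dominant one.  Differentiating the second expression for $\alpha_C(t)$ in~\eqref{alphaCeq} and setting $f:=f_{\text{inc}}(2t^{-\nu}r-1)$, the piece $t^3\alpha_A^0(t^{-1}r,\sigma)$ is killed by $\d$ (it is closed), and $\d(\Psi_N^*\alpha_N^i-\alpha_A^i)=0$ since both summands are closed, leaving
\begin{align*}
\d\alpha_C(t) &= -t^3\,\d f\wedge\alpha_A'(t^{-1}r,\sigma) + t^3(1-f)\,\d\alpha_A'(t^{-1}r,\sigma) \\
&\quad + \d f\wedge\sum_{i=1}^d t^{1-\lambda_i}\big(\Psi_N^*\alpha_N^i-\alpha_A^i\big) + \d f\wedge\alpha_N + f\,\d\alpha_N.
\end{align*}
The dilation scaling gives $|t^3\alpha_A'(t^{-1}r,\sigma)|_{g_C} = O(t^{2-2\lambda}r^{2\lambda-1})$ (using the rate $2\lambda-1$ of $\alpha_A'$ from Proposition~\ref{alphaAprop}), and $|\d f|_{g_C} = O(t^{-\nu})$, so at $r\sim t^\nu$ the first piece is $O(t^{2(1-\lambda)(1-\nu)})$; coassociativity of $A$ (which gives $\d\alpha_A = -P_C(\alpha_A,\nabla_C\alpha_A)$) puts $t^3(1-f)\,\d\alpha_A'$ at the same rate, and the nonlinear term $P_C(\alpha_C(t),\nabla_C\alpha_C(t))$ applied to the dominant $t^3\alpha_A^0$-piece likewise gives $O(t^{2(1-\lambda)(1-\nu)})$.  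The matching condition bounds $|\Psi_N^*\alpha_N^i-\alpha_A^i|_{g_C} = O(r^{\lambda_i+\delta_0})$, so the third piece above is $O(t^{(1-\nu)(1-\lambda_i)+\nu\delta_0})$; the inequality $(1-\nu)(1-\lambda_i)+\nu\delta_0 \geq 2(1-\lambda)(1-\nu)$ needed for this to be subdominant rearranges precisely to~\eqref{nueq}, which is exactly why that lower bound on $\nu$ is imposed.  The remaining pieces $\d f\wedge\alpha_N$ and $f\,\d\alpha_N$ are controlled by $\mu>1+2\delta_0$ from~\eqref{delta0eq}, and the ambient defect $(\chi^*\varphi-\varphi_0)|_{\text{graph}} = O(r) = O(t^\nu)$ in region (iii) is subdominant for the same reason.

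Since region (iii) has $4$-volume $\sim t^{4\nu}$ and $\rho_t\sim t^\nu$,
\[
\int_{\text{(iii)}} |\varphi|^2\,\rho_t^{2-2\delta}\,\d\!\vol \;\leq\; C\, t^{4(1-\lambda)(1-\nu)+\nu(6-2\delta)},
\]
whose square root is the claimed rate.  The main obstacle is the bookkeeping in region (iii): each of the several contributions to $\d\alpha_C(t)$ and to $P_C(\alpha_C(t),\nabla_C\alpha_C(t))$ must be shown to sit at or below the rate $t^{2(1-\lambda)(1-\nu)}$ set by the $\alpha_A'$-piece, and it is this balancing that forces the specific constraints~\eqref{nueq} on $\nu$, \eqref{delta0eq} on $\mu$, and the standing hypothesis $\lambda<-\tfrac{1}{2}$ (which ensures the rate of $\alpha_A'$ is $2\lambda-1<-2$, so that Proposition~\ref{alphaAprop} is available).
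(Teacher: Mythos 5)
Your decomposition, estimates and organizing principles match the paper's proof almost exactly; the proposal is correct. Two small remarks worth recording.

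First, the outer region (iv) is \emph{not} subdominant: with $|\Psi_N^*\alpha_N^i|=O(r^{\lambda_i})$ and $\lambda_i\le\lambda$, the quadratic term $P_N(\alpha_N^0(t),\cdot)$ on $\Psi_N\big((t^\nu,\epsilon)\times\Sigma\big)$ integrates to exactly $O\big(t^{2\nu(3-\delta)+4(1-\lambda)(1-\nu)}\big)$, the same order as the annular region (iii), which is why the paper records it as \eq{phiest3} alongside \eq{phiest4} and sums all three. The conclusion is unchanged, but "subdominant" misstates what happens here (the contribution is genuinely of leading order whenever $\lambda_d=\lambda$).

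Second, on the annulus you work with the graph of $\alpha_C(t)$ over the cone $C$ under the ambient $\varphi_0$, and so you must explicitly carry the defect $\chi^*\varphi-\varphi_0=O(r)$. The paper instead reparameterises $\tilde{N}_m(t)$ as the graph of $\beta=\alpha_C(t)-\alpha_N$ over the coassociative $\hat{N}$ via Proposition~\ref{NAnbdprop}(a), which gives $\varphi|_{\tilde{N}_m(t)}=\d\beta+P_C(\beta,\nabla_C\beta)$ directly, with the $O(r)$ defect absorbed by $\varphi|_{\hat{N}}\equiv 0$. Both routes give the same annular estimate; your accounting for the defect is correct (and requires the same $\delta_0\le\tfrac12$ shrinkage that keeps the lower-region $O(t^{\nu(4-\delta)})$ term from overtaking the target rate, a point you state explicitly and the paper leaves implicit).

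Finally, a small bookkeeping check: in your string of inequalities deriving \eq{nueq} from $(1-\nu)(1-\lambda_i)+\nu\delta_0\geq 2(1-\lambda)(1-\nu)$, the rearrangement is exact only for $\lambda_i=\lambda$; for $\lambda_i<\lambda$ the requirement is strictly weaker, so the sharp constraint comes from the top rate $\lambda_d$, consistent with the paper's use of $\lambda_i\le\lambda$ throughout.
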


\begin{remark}
It is in the proof of this proposition that we finally use the constraint on $\nu$ in \eq{nueq}.
\end{remark}

The key idea in the proof is that our matching condition and the assumption that $\lambda<-\frac{1}{2}$ ensure that the terms which should naively give the 
largest contribution to $|\varphi|_{\tilde{N}(t)}|$ in fact are zero or effectively cancel.

\begin{proof} 
For convenience, we use the diffeomorphisms given in \eq{PsiAteq}, \eq{PsiNteq} and \eq{PsiCteq} to decompose $\tilde{N}(t)$ into three pieces:
\begin{gather*}
\tilde{N}_l(t)=
\Psi_{A,t}\left(tK_A\cup t\Phi_{A}\big((R,\textstyle\frac{1}{2}\displaystyle t^{\nu-1})\times\Sigma\big)\right)\subseteq
\chi\big(t\hat{A}(t)\big),
\\
\tilde{N}_m(t)=\Psi_{C,t}\big([\textstyle\frac{1}{2}\displaystyle t^{\nu},t^{\nu}]\times\Sigma\big)\,\;\text{and}\;\
\tilde{N}_u(t)=\Psi_{N,t}\big(\Psi_N\big((t^{\nu},\epsilon)\times\Sigma\big)\cup K_N\big).
\end{gather*} 
First observe trivially that 
\begin{equation}\label{phiest1}
\|\varphi|_{\tilde{N}(t)}\|_{L^2_{3,-3+\delta,t}}^2=\|\varphi\|_{L^2_{3,-3+\delta,t}\big(\tilde{N}_l(t)\big)}^2+
\|\varphi\|_{L^2_{3,-3+\delta,t}\big(\tilde{N}_m(t)\big)}^2+\|\varphi\|_{L^2_{3,-3+\delta,t}\big(\tilde{N}_u(t)\big)}^2.
\end{equation}
  
We begin with estimating the norm of $\varphi$ on $\tilde{N}_l(t)$.  Since $\chi^*(\varphi)$ agrees with $\varphi_0$ at $0$, we have that 
$\chi^*(\varphi)=\varphi_0+O(r)$ on $B(0;\epsilon_M)$.  In fact, $\chi^*(\nabla^j\varphi)=\nabla^j\varphi_0+O(r^{1-j})$ 
for $j\in\N$.  Since $\varphi_0|_{t\hat{A}(t)}\equiv 0$, we have that 
$$|r^{j+3-\delta}\chi^*(\nabla^j\varphi)|=O(r^{4-\delta})\quad\text{on $t\hat{A}(t)$.}$$
Hence the dominate terms in calculating the norm of $\varphi$ on $\tilde{N}_l(t)$ arise on $\chi(t\hat{A}(t))\setminus\chi(tK_A)$.
 We may calculate
\begin{align*}
\sum_{j=0}^3\int_{t\Phi_A\big((R,\frac{1}{2}t^{\nu-1})\times\Sigma\big)}
|r^{j+3-\delta}\chi^*(\nabla^j\varphi)|^2r^{-4}\d\!\vol_{g_0|_{tA}}
&=O\left(\int_{tR}^{\frac{1}{2}t^{\nu}}\!\!r^{2(4-\delta)}r^{-1} \d r\right)\\&=O(t^{2 \nu(4-\delta)}).
\end{align*}
Hence, there exists a $t$-independent constant $C_l>0$ such that
\begin{equation}\label{phiest2}
\|\varphi\|_{L^2_{3,-3+\delta,t}\big(\tilde{N}_l(t)\big)}^2\leq C_l t^{2\nu(4-\delta)}.
\end{equation}

For $p\in\tilde{N}_u(t)$, because $\varphi|_{\hat{N}}\equiv 0$, we may decompose $\varphi(p)$ in a similar manner to \eq{phiexpandeq}: 
$$\varphi(p)=\d\alpha_N^0(t)(p)+P_N\big(p,\alpha_N^0(t)(p),\nabla\alpha_N^0(t)(p)\big).$$
Since $\alpha_N^0(t)=\sum_{i=1}^dt^{1-\lambda_i}\alpha_N^i$ is closed and $P_N(p,\alpha(p),\nabla\alpha(p))$ is dominated by $|r^{-1}\Psi_N^*\alpha(r,\sigma)|^2$ 
and $|\nabla_C\Psi_N^*\alpha(r,\sigma)|^2$, we see that the largest contribution to $|\varphi|_{\tilde{N}_u(t)}|$ arises from 
$\Psi_{N,t}\big(\Psi_N\big((t^{\nu},\epsilon)\times\Sigma\big)\big)$.  From our matching condition in Definition \ref{matchingdfn} we have that 
$|\nabla^j_C\Psi_N^*\alpha_N^i|=O(r^{\lambda_i-j})$, so on $\tilde{N}_u(t)$ we have $|\varphi|=O(\sum_{i=1}^dt^{2(1-\lambda_i)}r^{2\lambda_i-2})$.    We calculate
\begin{align*}
\int_{\Psi_N((t^{\nu},\epsilon)\times\Sigma)}|r^{3-\delta}\sum_{i=1}^dt^{2(1-\lambda_i)}r^{2\lambda_i-2}|^2 &r^{-4}\d\vol_{g_{\varphi}|_{\hat{N}}}\\
&=O\left(\sum_{i=1}^d t^{4(1-\lambda_i)}\int_{t^\nu}^\epsilon r^{4\lambda_i+1-2\delta}\d r\right)\\
&=O\left(\sum_{i=1}^dt^{\nu(4\lambda_i+2-2\delta)+4(1-\lambda_i)}\right).
\end{align*}
Since $\lambda_i\leq\lambda$ for all $i$, we see that 
there exists a $t$-independent constant $C_u>0$ such that
\begin{equation}\label{phiest3}
\|\varphi\|_{L^2_{3,-3+\delta,t}\big(\tilde{N}_u(t)\big)}^2\leq C_u t^{2\nu(3-\delta)+4(1-\lambda)(1-\nu)}.
\end{equation}

We are now left with $\tilde{N}_m(t)$, which will be the key contribution to calculate.  Since the graph of $\alpha_N$ over $C$ defines $\hat{N}$ near $z$, 
we can view $\tilde{N}_m(t)$ as the graph of $\beta=\alpha_C(t)-\alpha_N$ over $N$ via Proposition \ref{NAnbdprop}.  Since $N$ is coassociative and we can approximate the metric 
on $\tilde{N}_m(t)$ using the conical metric, we see that 
 $$|\varphi|_{\tilde{N}_m(t)}|\leq c |\d\beta+P_C(\beta,\nabla_C\beta)|$$
 for some $c>0$ independent of $t$.  Now we can use \eq{alphaCeq} to decompose $\beta$ into four terms, which we can estimate using the fact that $2\lambda-1<-2$ in Proposition \ref{alphaAprop} 
and the matching condition  as follows:
\begin{align}
|t^3\nabla_C^j\delta_{t^{-1}}^*\alpha_A^0(r,\sigma)|&=O\left(\sum_{i=1}^d t^3.t^{-j-2}(t^{-1}r)^{\lambda_i-j}\right)\nonumber\\&=O(t^{1-\lambda}r^{\lambda-j}),\label{betaest1}\\
|t^3\nabla_C^j\delta_{t^{-1}}^*\alpha_A^{\prime}(r,\sigma)|&=O(t^{1-\lambda_-}r^{\lambda_--j}),\label{betaest2}\\
\big|t^{1-\lambda_i}\nabla_C^j\big(\Psi_N^*\alpha_N^i(r,\sigma)-\alpha_A^i(r,\sigma)\big)\big|&=O(t^{1-\lambda_i}r^{\lambda_i+\delta_0-j})\nonumber\\&=O(t^{1-\lambda}
r^{\lambda+\delta_0-j}),\label{betaest3}\\
|\nabla_C^j\alpha_N(r,\sigma)|&=O(r^{\mu-j}),\label{betaest4}
\end{align}
where $\lambda_-<-2$.  
Overall, $|r^{-1}\beta(r,\sigma)|^2$ and $|\nabla_C\beta(r,\sigma)|^2$ are dominated by terms of order $O((t^{-1}r)^{2(\lambda-1)})$ and 
$O(r^{2(\mu-1)})$.  The choice of $\nu$ in \eq{nueq} ensures that $\nu>(1-\lambda)/(\mu-\lambda)$ since $\mu>1+\delta_0$ by assumption \eq{delta0eq}, hence $(1-\lambda)(1-\nu)<\nu(\mu-1)$ so that
terms with the former exponent give the greatest contribution as 
$t\rightarrow 0$ on $\tilde{N}_m(t)$ (recalling that $r=O(t^{\nu})$).  Thus, we see from \eq{alphaCeq} that 
\begin{equation}\label{betaest5}
\big|P_C\big(r,\sigma,\beta(r,\sigma),\nabla_C\beta(r,\sigma)\big)\big|=O\big((t^{-1}r)^{2(\lambda-1)}\big).
\end{equation}

Recall that $\alpha_A^0$, $\alpha_N^i$ and $\alpha_A^i$ are all closed forms, so we have from \eq{alphaCeq} that 
\begin{align*}
&\d\beta(r,\sigma)\\ &= t^3\big(1-f_{\text{inc}}(2t^{-\nu}r-1)\big)\delta_{t^{-1}}^*\d\alpha_A^{\prime}(r,\sigma)-2t^{3-\nu}\d f_{\text{inc}}(2t^{-\nu}r-1)\w\delta_{t^{-1}}^*\alpha_A^{\prime}(r,\sigma)\\
&+2t^{-\nu}\d f_{\text{inc}}(2t^{-\nu}r-1)\w\sum_{i=1}^d t^{1-\lambda_i}\big(\Psi_N^*\alpha_N^i(r,\sigma)-\alpha_A^i(r,\sigma)\big)\\
&+\big(f_\text{inc}(2t^{-\nu}r-1)-1\big)\d\alpha_N(r,\sigma)+2t^{-\nu}\d f_{\text{inc}}(2t^{-\nu}r-1)\w\alpha_N(r,\sigma).
\end{align*}
From \eq{betaest1}-\eq{betaest4}, we may calculate
\begin{align*}
|\d\beta(r,\sigma)|\leq c^{\prime}(t^{1-\lambda_-}r^{\lambda_--1}+t^{1-\nu-\lambda_-}r^{\lambda_-}+t^{1-\nu-\lambda}r^{\lambda+\delta_0}+r^{\mu-1}+t^{-\nu}r^{\mu})
\end{align*}
for some $c^{\prime}>0$ independent of $r$, $\sigma$ and $t$.  The dominant terms are therefore of order $O(t^{1-\nu-\lambda}r^{\lambda+\delta_0})$ and 
$O(r^{\mu-1})$.  Again using \eq{nueq} and recalling that $r=O(t^{\nu})$ we find that the former terms dominate since $\mu-\delta_0>1+\delta_0$ by \eq{delta0eq} so $1-\lambda+\nu(\lambda+\delta_0-1)<\nu(\mu-1)$.
Thus we have that
\begin{equation}\label{betaest6}
|\d\beta(r,\sigma)|=O(t^{1-\nu-\lambda}r^{\lambda+\delta_0}).
\end{equation}

Finally, we can compare  \eq{betaest5} and \eq{betaest6}.  We see by \eq{nueq} that $2(1-\lambda)(1-\nu)<1-\lambda+\nu(\lambda+\delta_0-1)$.  Thus 
the terms in \eq{betaest5} are dominant as $t\rightarrow 0$, which is crucial for our later argument.  We can therefore estimate
$$\int_{\frac{1}{2}t^{\nu}}^{t^{\nu}}|r^{3-\delta}(t^{-1}r)^{2(\lambda-1)}|^2r^{-1}\d r=O\big(t^{2\nu(3-\delta)+4(1-\lambda)(1-\nu)}\big).$$
Thus there exists a $t$-independent constant $C_m>0$ such that
\begin{equation}\label{phiest4}
\|\varphi\|_{L^2_{3,-3+\delta,t}\big(\tilde{N}_m(t)\big)}^2\leq C_m t^{2\nu(3-\delta)+4(1-\lambda)(1-\nu)}.
\end{equation} 
Combining \eq{phiest1}, \eq{phiest2}, \eq{phiest3} and \eq{phiest4} gives the result.
\end{proof}

Following \cite[Proposition 6.3]{Lotaydesing} we can estimate the norm of $Q_t$.

\begin{prop}\label{Qestprop}
There exists a constant $C(Q)$, independent of $t$, such that if $\alpha,\beta\in L^{2}_{4}\big(\Lambda^2_+T^*\tilde{N}(t)\big)$ with $\|\alpha\|_{C^1_{1,t}},\|\beta\|_{C^1_{1,t}}<\tilde{\epsilon}$ then
\begin{align}
\|Q_t(\alpha)&-Q_t(\beta)\|_{L^{2}_{3,-3+\delta,t}}\nonumber\\
&\leq C(Q) t^{-3+\delta}\|\alpha-\beta\|_{L^{2}_{4,-2+\delta,t}}\big(\|\alpha\|_{L^2_{4,-2+\delta,t}}+
\|\beta\|_{L^2_{4,-2+\delta,t}}\big).\label{Qesteq}
\end{align}
\end{prop}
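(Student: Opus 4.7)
The plan is to follow the strategy of \cite[Proposition 6.3]{Lotaydesing} adapted to the weights here, with the main work being to track how the scaling parameter $t$ enters through the radius function $\rho_t$.

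First I would set up a pointwise estimate. By Proposition \ref{defmapprop}, $Q_t$ is a smooth map of $\alpha$ and $\nabla\alpha$ whose linearisation at zero vanishes, so Taylor's theorem gives
\[
Q_t(\alpha)-Q_t(\beta)=\int_0^1 \big(DQ_t\big|_{s\alpha+(1-s)\beta}\big)(\alpha-\beta)\,ds,
\]
and since $DQ_t$ vanishes at $0$, further expansion produces a bilinear structure. Consequently, for $j=0,1,2,3$ one obtains a pointwise bound of the schematic form
\[
|\nabla^j(Q_t(\alpha)-Q_t(\beta))|\;\leq\;C\!\!\sum_{\substack{a+b+c\leq j+1\\ 0\leq a,b,c\leq j+1}}\!\!|\nabla^a(\alpha-\beta)|\big(|\nabla^b\alpha|+|\nabla^b\beta|\big)\big(1+\text{terms of order }\geq 2\text{ in }(\alpha,\nabla\alpha,\beta,\nabla\beta)\big),
\]
valid whenever $\|\alpha\|_{C^1_{1,t}},\|\beta\|_{C^1_{1,t}}<\tilde\epsilon$; the higher order factors are bounded because of the $C^1$ smallness hypothesis. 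The worst contribution comes from the purely quadratic term involving products of first derivatives.

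Next I would invoke the weighted Sobolev embedding on $\tilde{N}(t)$, which takes the form
\[
\|\alpha\|_{C^1_{-2+\delta,t}}\;\leq\; C_{\rm Sob}\,\|\alpha\|_{L^2_{4,-2+\delta,t}},
\]
with $C_{\rm Sob}$ independent of $t$; this is the scale-invariance property of the weighted norms discussed in \cite{Pacinidesing}, and relies on the fact that the $\rho_t$-weights precisely compensate for the dilation behaviour of derivatives and volume on the shrinking neck. Combining the pointwise estimate with this embedding yields
\[
|\nabla^a(\alpha-\beta)|\,|\nabla^b\alpha|\;\leq\; C\rho_t^{a-2+\delta}\rho_t^{b-2+\delta}\,\|\alpha-\beta\|_{L^2_{4,-2+\delta,t}}\|\alpha\|_{L^2_{4,-2+\delta,t}},
\]
for $a,b\leq j+1$.

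Finally I would compute the weighted $L^2_{3,-3+\delta,t}$ norm by integration. For a typical quadratic term one gets
\[
\int_{\tilde N(t)}\big|\rho_t^{j+3-\delta}\rho_t^{a-2+\delta+b-2+\delta}\big|^2\rho_t^{-4}\,\d\!\vol_{\tilde g(t)}\;=\;O\!\left(\int_{tR}^{\epsilon}r^{2(a+b+j-1)+2\delta-1}\,dr\right),
\]
the worst case being $a=b=j+1$ (two factors of top-order derivatives), which yields an integral behaving like $\int_{tR}^{\epsilon} r^{-7+2\delta}\,dr=O(t^{-6+2\delta})$. Taking the square root gives the factor $t^{-3+\delta}$, and the remaining Sobolev-norm factors produce the bilinear dependence on $\|\alpha-\beta\|$, $\|\alpha\|$ and $\|\beta\|$ in \eq{Qesteq}. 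Summing over the finitely many schematic terms and $j=0,\ldots,3$ absorbs all lower order contributions into a single constant $C(Q)$ independent of $t$.

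The main obstacle is bookkeeping: verifying that the weighted Sobolev embedding constant is genuinely $t$-independent (which relies on the rescaling-covariance of the norms in Definition \ref{wspacedfn}), and confirming that no term in the Taylor expansion of $Q_t$ produces a worse $t$-power than $t^{-3+\delta}$. Both of these are essentially dimensional/scaling checks once the pointwise estimate is written down carefully, parallel to \cite[Proposition 6.3]{Lotaydesing}.
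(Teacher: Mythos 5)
Your overall strategy -- quadratic pointwise bound, weighted Sobolev embedding, then integration over the neck -- is the right one and is essentially the paper's, but your exponent bookkeeping does not hang together and the argument as written would not compile into a proof. Concretely: (i) the pointwise consequence of the embedding $L^2_{4,-2+\delta,t}\hookrightarrow C^1_{-2+\delta,t}$ is $|\nabla^a\xi|\leq C\rho_t^{-2+\delta-a}\|\xi\|_{L^2_{4,-2+\delta,t}}$, not $\rho_t^{a-2+\delta}$; (ii) the quadratic term in $\nabla^j(Q_t(\alpha)-Q_t(\beta))$ has total effective order $j+2$, not $j+1$, because each factor is of the form $\rho_t^{-1}\alpha$ or $\nabla\alpha$ (the $\rho_t^{-1}$ counts as a derivative for scaling purposes); and (iii) your claimed worst case is a non sequitur -- substituting $a=b=j+1$ into your own displayed exponent $2(a+b+j-1)+2\delta-1$ gives $6j+1+2\delta$, not $-7+2\delta$. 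The final answer $t^{-3+\delta}$ is correct, but only because with the corrected signs and the order-$(j+2)$ count one gets an integrand $O(r^{2\delta-7})$ on the neck, whence $O(t^{2\delta-6})$ and the square root $t^{-3+\delta}$. There is also a genuine issue with putting \emph{both} factors in sup norm: for $j\geq 1$ at least one factor carries $\geq 2$ derivatives, which the embedding $L^2_4\hookrightarrow C^1$ does not control pointwise; handling these terms requires the standard interpolation/H\"older devices of \cite[Proposition 5.8]{JoyceCS3}, which you should at least cite.

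The paper organises the estimate differently, and more cleanly, so that the power of $t$ appears exactly once. One factor of the quadratic expression \eq{embedesteq} is bounded in the sup norm $C^1_{1,t}$ (with weight $1$, so no residual power of $\rho_t$ survives), while the other is kept inside the integral, where the weight $\rho_t^{3-\delta}$ of the $L^2_{0,-3+\delta,t}$ norm recombines with $|\rho_t^{-1}\alpha|+|\nabla\alpha|$ to give precisely the integrand of the $L^2_{1,-2+\delta,t}$ norm. This yields
$$\|Q_t(\alpha)\|_{L^2_{0,-3+\delta,t}}\leq c\,\|\alpha\|_{C^1_{1,t}}\|\alpha\|_{L^2_{1,-2+\delta,t}}$$
with no $t$-dependence at all, and the single factor $t^{-3+\delta}$ then comes from Lemma \ref{embedlem}, i.e.\ from the cost $\sup\rho_t^{-3+\delta}=O(t^{-3+\delta})$ of passing from weight $-2+\delta$ to weight $1$ in the sup norm. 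Your route of integrating unbalanced $\rho_t$-powers down to the inner radius $tR$ is equivalent in spirit (the integral is dominated by $r\sim t$, which is exactly where $\rho_t^{-3+\delta}$ peaks), but if you take that route you must get the exponents right, and you should be aware that the paper's weight-matching makes it much harder to lose a power of $t$.
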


Before proving this we have the following lemma, which explains the appearance of the factor of $t^{-3+\delta}$ in Proposition \ref{Qestprop} as 
the $t$-dependence of the Sobolev embedding constant between weighted Sobolev spaces of different weights.

\begin{lem}\label{embedlem}
Let $\alpha\in L^2_4\big(\Lambda^2_+T^*\tilde{N}(t)\big)$.  Then $\alpha\in C^1\big(\Lambda^2_+T^*\tilde{N}(t)\big)$ and 
there exists a constant $c>0$, independent of $\alpha$ and $t$, such that
$$\|\alpha\|_{C^1_{1,t}}\leq c t^{-3+\delta}\|\alpha\|_{L^2_{4,-2+\delta,t}}.$$
\end{lem}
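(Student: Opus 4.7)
The strategy is to apply the standard scale-invariant Sobolev embedding $L^2_4\hookrightarrow C^1$ in dimension $4$ locally, on balls whose radius is comparable to $\rho_t$, and then book-keep the powers of $\rho_t$ that appear when passing between the weighted and unweighted norms. The $t$-dependence will enter at the very end from the fact that $\rho_t\geq c\,tR$ everywhere on $\tilde{N}(t)$, so its minimum is of order $t$.

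First, for each $p\in\tilde{N}(t)$, set $r_p=\rho_t(p)$ and consider the geodesic ball $B(p;r_p/2)\subseteq \tilde{N}(t)$. The crucial geometric fact is that on $B(p;r_p)$ the rescaled metric $r_p^{-2}\tilde{g}(t)$ is uniformly quasi-isometric (independently of $t$ and $p$) to a fixed model: near $\chi(tK_A)$ this model is the metric on a bounded region of $A$; on the interpolation annulus it is the cone metric on $C$ restricted to an annulus of fixed proportions; near $\Upsilon_N(\Gamma_{\alpha_N^0(t)|_{K_N}})$ the metric $\tilde{g}(t)$ is already uniformly close to the fixed metric on $N$. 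These uniform bounds on the rescaled geometry are precisely the content of the weighted Banach space framework developed in \cite{Pacinidesing}, and they guarantee that the standard Sobolev constant on $B(p;r_p/2)$, after scaling to a unit ball, can be chosen independently of $t$ and $p$.

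With this uniform scaled Sobolev inequality in hand, for $|\beta|\leq 1$ we have
\begin{equation*}
r_p^{|\beta|}\sup_{B(p;r_p/2)}|\nabla^{|\beta|}\alpha|\;\leq\;c\,r_p^{-2}\sum_{j=0}^{4}r_p^{j}\|\nabla^{j}\alpha\|_{L^2(B(p;r_p))}.
\end{equation*}
On the other hand, the definition of the weighted norm together with $\rho_t\sim r_p$ on $B(p;r_p)$ gives
\begin{equation*}
\|\nabla^{j}\alpha\|_{L^2(B(p;r_p))}\;\leq\;c\,r_p^{-j+\delta}\,\|\alpha\|_{L^{2}_{4,-2+\delta,t}},\qquad 0\leq j\leq 4,
\end{equation*}
coming from $|\rho_t^{j-(-2+\delta)}\nabla^j\alpha|^2\rho_t^{-4}\sim r_p^{2j-2\delta}|\nabla^j\alpha|^2$. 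Substituting, the right-hand side telescopes and yields
\begin{equation*}
\sup_{B(p;r_p/2)}|\nabla^{|\beta|}\alpha|\;\leq\;c\,r_p^{-|\beta|-2+\delta}\,\|\alpha\|_{L^{2}_{4,-2+\delta,t}}.
\end{equation*}

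Finally, recalling that $\|\alpha\|_{C^1_{1,t}}=\sup|\rho_t^{-1}\alpha|+\sup|\nabla\alpha|$, both contributions on $B(p;r_p/2)$ are controlled by $c\,r_p^{-3+\delta}\,\|\alpha\|_{L^{2}_{4,-2+\delta,t}}$. Since $-3+\delta<0$, the right-hand side is largest where $r_p$ is smallest, and by construction of $\rho_t$ this minimum value is of order $tR$. Taking the supremum over $p\in\tilde{N}(t)$ and absorbing $R^{-3+\delta}$ into the constant therefore gives
\begin{equation*}
\|\alpha\|_{C^1_{1,t}}\;\leq\;c\,t^{-3+\delta}\,\|\alpha\|_{L^{2}_{4,-2+\delta,t}},
\end{equation*}
as required. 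The main obstacle in implementing this plan is the uniform scaling statement for the local Sobolev constant: one must verify that as $t\to 0$ and as $p$ wanders through the gluing region, the rescaled geometries form a precompact family, so that the abstract constant does not deteriorate. This is exactly the motivation for the scale-equivariant weighted framework of \cite{Pacinidesing}, and once accepted the rest of the proof is essentially bookkeeping of scaling exponents.
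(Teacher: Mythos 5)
Your proof is correct and follows essentially the same two-step strategy as the paper: first establish a $t$-uniform weighted Sobolev embedding at the ``natural'' weight $-2+\delta$ (which the paper states directly, citing the scale-equivariance of the weighted norms, and which you unpack via the local scaled-ball argument on balls of radius $\sim\rho_t$), and then shift from weight $-2+\delta$ to weight $1$ at the cost of a factor $\rho_t^{-3+\delta}$, whose supremum is of order $t^{-3+\delta}$ since $\min\rho_t\sim t$.
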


\begin{proof}
The Sobolev Embedding Theorem gives a
continuous embedding $L^2_4\hookrightarrow C^1$.  Examination of the definition of the weighted norms shows
 there exists a $t$-independent constant $c_0$ such that, for all $\alpha\in L^2_4$,
\begin{equation}\label{embedest0} 
\|\alpha\|_{C^1_{-2+\delta,t}}\leq c_0\|\alpha\|_{L^2_{4,-2+\delta,t}};
\end{equation}
i.e.~the embedding constant $L^2_{4,-2+\delta,t}\hookrightarrow C^1_{-2+\delta,t}$ is independent of $t$. 

We now calculate 
\begin{align}
\|\alpha\|_{C^1_{1,t}}=\sup(|\rho_t^{-1}\alpha|+|\nabla\alpha|)&=
\sup(|\rho_t^{-3+\delta}\rho_t^{2-\delta}\alpha|+|\rho_t^{-3+\delta}\rho_t^{3-\delta}\nabla\alpha|)\nonumber\\
&\leq c_1 t^{-3+\delta}\|\alpha\|_{C^1_{-2+\delta,t}}\label{embedest1}
\end{align}
 for some constant $c_1$ independent of $t$ and $\alpha$.  Combining \eq{embedest0} and \eq{embedest1} proves the lemma.
\end{proof}

\begin{proof}[Proposition \ref{Qestprop}]
In the proof of \cite[Proposition 6.2]{Lotaydesing} and following \cite[Proposition 6.3]{Lotaydesing}, it is explained that 
\begin{equation}\label{embedesteq}
|Q_t(\alpha)-Q_t(\beta)|=O\Big(\big(|\rho_t^{-1}(\alpha-\beta)|+|\nabla(\alpha-\beta)|\big)\big(
|\rho_t^{-1}\alpha|+|\nabla\alpha|+|\rho_t^{-1}\beta|+|\nabla\beta|\big)\Big);
\end{equation}
that is, $Q_t$ is dominated by quadratic terms in $\rho_t^{-1}\alpha$ and $\nabla\alpha$ when
 $\|\alpha\|_{C^1_{1,t}}<\tilde{\epsilon}$.  Therefore, an inequality of the type \eq{Qesteq} must hold for some (possibly 
$t$-dependent) constant $C(Q)$ (see, for example, \cite[Proposition 5.8]{JoyceCS3} for a detailed description of the type of 
argument involved).  It suffices therefore to show that $C(Q)$ can be chosen to be independent of $t$.

We may calculate using \eq{embedesteq} with $\beta=0$ to show that
\begin{align}
\|Q_t(\alpha)\|^2_{L^2_{0,-3+\delta,t}} &=\int_{\tilde{N}(t)}\rho_t^{2(3-\delta)}|Q_t(\alpha)|^2\rho_t^{-4}\d\!\vol_{\tilde{g}(t)}\nonumber\\
&=O\Big(\|\alpha\|_{C^1_{1,t}}^2\int_{\tilde{N}(t)}\rho_t^{2(2-\delta)}\rho_t^2(|\rho_t^{-1}\alpha|+|\nabla\alpha|)^2\rho_t^{-4}\d\!\vol_{\tilde{g}(t)}\Big)\nonumber\\
&=O(\|\alpha\|_{C^1_{1,t}}^2\|\alpha\|_{L^2_{1,-2+\delta,t}}^2).\label{embedest2}
\end{align}
Using Lemma \ref{embedlem} and \eq{embedest2} shows that 
\begin{equation}\label{embedest3}
\|Q_t(\alpha)\|_{L^2_{0,-3+\delta,t}}\leq c_2t^{-3+\delta}\|\alpha\|_{L^2_{4,-2+\delta,t}}^2
\end{equation}
for some $t$-independent constant $c_2$.  We deduce that \eq{embedest3} can be improved to give \eq{Qesteq} with constant $C(Q)$ 
independent of $t$.
%
%
\end{proof}

We may now show that $\mathcal{C}_t$ is indeed a contraction.

\begin{thm}\label{fpthm}
Let 
$$3\nu-\delta+2(1-\lambda)(1-\nu)>\kappa>3\nu-\delta+(3+\delta)(1-\nu)=3-\nu\delta$$
 (which is possible since $\nu<1$ and $2(1-\lambda)>3+\delta$ by \eq{deltaeq}) and let 
$$B_{t^\kappa}=\{\alpha\in\tilde{\mathcal{K}}_{\text{\emph{ap}}}(t)^{\perp}\subseteq L^2_4\big(\Lambda^2_+T^*\tilde{N}(t)\big)
:\|\alpha\|_{L^2_{4,-2+\delta,t}}\leq t^\kappa\}.$$
Then $\mathcal{C}_t:B_{t^\kappa}\rightarrow B_{t^\kappa}$ has a unique fixed point $\tilde{\alpha}(t)$.
\end{thm}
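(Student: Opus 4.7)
The strategy is to apply the Banach Contraction Mapping Theorem to $\mathcal{C}_t$ on the closed ball $B_{t^\kappa}$ inside the Banach space $\tilde{\mathcal{K}}_{\text{ap}}(t)^\perp \subseteq L^2_{4,-2+\delta,t}(\Lambda^2_+T^*\tilde{N}(t))$. Three things must be checked: that $\mathcal{C}_t$ is well-defined on $B_{t^\kappa}$ (so $Q_t(\alpha)$ makes sense), that $\mathcal{C}_t(B_{t^\kappa}) \subseteq B_{t^\kappa}$, and that $\mathcal{C}_t$ is a strict contraction, with contraction constant $\leq \tfrac{1}{2}$ for $\tau$ small.

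For well-definedness, Lemma \ref{embedlem} gives, for any $\alpha \in B_{t^\kappa}$,
\begin{equation*}
\|\alpha\|_{C^1_{1,t}} \leq c\,t^{-3+\delta}\|\alpha\|_{L^2_{4,-2+\delta,t}} \leq c\,t^{\kappa-3+\delta}.
\end{equation*}
Since $\kappa > 3 - \nu\delta > 3 - \delta$ (as $\nu < 1$), the exponent $\kappa - 3 + \delta$ is strictly positive, and so $\|\alpha\|_{C^1_{1,t}} < \tilde{\epsilon}$ for $\tau$ sufficiently small.

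For self-mapping, combine the operator bound $\|P_\d\| \leq C(\tilde{N})\,t^{-\delta(1-\nu)}$ from Theorem \ref{invthm}, the estimate from Proposition \ref{phiestprop} on $\|\varphi|_{\tilde{N}(t)}\|_{L^2_{3,-3+\delta,t}}$, and the estimate from Proposition \ref{Qestprop} with $\beta = 0$ to obtain
\begin{equation*}
\|\mathcal{C}_t(\alpha)\|_{L^2_{4,-2+\delta,t}} \leq C_1\,t^{3\nu-\delta+2(1-\lambda)(1-\nu)} + C_2\,t^{\nu\delta-3+2\kappa}.
\end{equation*}
The upper bound $\kappa < 3\nu-\delta+2(1-\lambda)(1-\nu)$ makes the first exponent strictly larger than $\kappa$, while the lower bound $\kappa > 3-\nu\delta$ is exactly the statement that $\nu\delta - 3 + 2\kappa > \kappa$. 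Hence for $\tau$ small, $\|\mathcal{C}_t(\alpha)\|_{L^2_{4,-2+\delta,t}} \leq t^\kappa$, so $\mathcal{C}_t$ maps $B_{t^\kappa}$ into itself. The analogous computation using the full strength of Proposition \ref{Qestprop} yields, for $\alpha,\beta \in B_{t^\kappa}$,
\begin{equation*}
\|\mathcal{C}_t(\alpha) - \mathcal{C}_t(\beta)\|_{L^2_{4,-2+\delta,t}} \leq 2\,C_3\,t^{\nu\delta-3+\kappa}\,\|\alpha-\beta\|_{L^2_{4,-2+\delta,t}},
\end{equation*}
whose exponent is positive by the same inequality $\kappa > 3 - \nu\delta$. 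Thus $\mathcal{C}_t$ is a strict contraction for $\tau$ small, and the Banach Fixed Point Theorem produces the unique fixed point $\tilde{\alpha}(t) \in B_{t^\kappa}$.

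There is no serious obstacle: the analytic heavy lifting has already been done in Theorem \ref{invthm} (where the carefully constructed approximate kernel $\tilde{\mathcal{K}}_{\text{ap}}(t)$ controls $\|P_\d\|$), Proposition \ref{phiestprop} (where the matching condition and $\lambda < -\tfrac{1}{2}$ give a sufficiently small defect $\varphi|_{\tilde{N}(t)}$), and Proposition \ref{Qestprop}. What remains is purely bookkeeping of the $t$-exponents, and the interval of admissible $\kappa$ in the theorem has been engineered to be nonempty precisely because the self-mapping and contraction conditions on $\kappa$ are compatible, using $2(1-\lambda) > 3+\delta$ from \eqref{deltaeq} and $\nu < 1$.
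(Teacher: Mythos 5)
Your proof is correct and follows essentially the same line of argument as the paper: Lemma \ref{embedlem} for well-definedness, then Theorem \ref{invthm} combined with Propositions \ref{phiestprop} and \ref{Qestprop} to verify the self-mapping and contraction properties, with the exponent bookkeeping (reducing to $\kappa<3\nu-\delta+2(1-\lambda)(1-\nu)$ and $\kappa>3-\nu\delta$) matching the paper's computation exactly. The Contraction Mapping Theorem then gives the fixed point, just as in the paper.
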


\begin{proof}
Let $\alpha,\beta\in B_{t^\kappa}$.  By Lemma \ref{embedlem}
$$\|\alpha\|_{C^1_{1,t}}\leq ct^{-3+\delta+\kappa},$$
 so we first choose $\tau$ such that $c\tau^{-3+\delta+\kappa}<\tilde{\epsilon}$ so that $F_t(\alpha)$, 
and thus $\mathcal{C}_t(\alpha)$, is well-defined.  This is possible by choice of $\kappa>3-\nu\delta>3-\delta$. 

Using Theorem \ref{invthm}, Proposition \ref{phiestprop} and Proposition \ref{Qestprop}, we calculate
\begin{align*}
\|\mathcal{C}_t&(\alpha)\|_{L^2_{4,-2+\delta,t}}\\
&=\big\|P_{\d}\big(-\varphi|_{\tilde{N}(t)}-Q_t(\alpha)\big)\big\|_{L^2_{4,-2+\delta,t}}\\
&\leq C(\tilde{N})t^{-\delta(1-\nu)}(\|\varphi|_{\tilde{N}(t)}\|_{L^2_{3,-3+\delta,t}}+\|Q_t(\alpha)\|_{L^2_{3,-3+\delta,t}})\\
&\leq C(\tilde{N})t^{-\delta(1-\nu)}\big(C(\varphi)t^{\nu(3-\delta)+2(1-\lambda)(1-\nu)}+C(Q)t^{-3+\delta}\|\alpha\|_{L^2_{4,-2+\delta,t}}^2\big).
\end{align*}
Taking $\tau$ such that 
$$C(\tilde{N})\big(C(\varphi)\tau^{3\nu-\delta+2(1-\lambda)(1-\nu)-\kappa}+C(Q)\tau^{-3+\nu\delta+\kappa}\big)<1,$$
which is possible by the choice of $\kappa$, ensures that $\mathcal{C}_t(\alpha)\in B_{t^\kappa}$.

Using Theorem \ref{invthm} and Proposition \ref{Qestprop} again, we deduce that
\begin{align*}
\|\mathcal{C}_t(\alpha)&-\mathcal{C}_t(\beta)\|_{L^2_{4,-2+\delta,t}}\\&=\big\|P_{\d}\big(Q_t(\alpha)-Q_t(\beta)\big)\big\|_{L^2_{4,-2+\delta,t}}\\
&\leq C(\tilde{N})C(Q)t^{-3+\nu\delta}\|\alpha-\beta\|_{L^2_{4,-2+\delta,t}}(\|\alpha\|_{L^2_{4,-2+\delta,t}}+\|\beta\|_{L^2_{4,-2+\delta,t}}).
\end{align*}
We finally take $\tau$ such that
$$2C(\tilde{N})C(Q)\tau^{-3+\nu\delta+\kappa}<1$$
so that $\mathcal{C}_t:B_{t^{\kappa}}\rightarrow B_{t^{\kappa}}$ is a contraction.  Applying the Contraction Mapping Theorem 
 gives the result.
\end{proof}

Our main result (Theorem \ref{mainthm1}) now follows from the next theorem.

\begin{thm}\label{desingthm} For all $t\in(0,\tau)$, let $N(t)=\tilde{N}_{\tilde{\alpha}(t)}(t)$ as in Definition \ref{defmapdfn} with $\tilde{\alpha}(t)$ given by Theorem \ref{fpthm}.  Then $N(t)$ is a smooth compact coassociative 4-fold 
such that $N(t)\rightarrow N$ as $t\rightarrow 0$ in the sense of currents.
\end{thm}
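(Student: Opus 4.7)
The plan is to collect the consequences of Theorem \ref{fpthm}. By that theorem, for each $t\in(0,\tau)$ there is a fixed point $\tilde\alpha(t)\in B_{t^\kappa}\subseteq L^2_4\big(\Lambda^2_+T^*\tilde N(t)\big)$ of $\mathcal{C}_t$. By Definition \ref{ctdfn} any such fixed point satisfies $F_t(\tilde\alpha(t))=0$, and by Definition \ref{defmapdfn} this is precisely the statement that $\varphi$ vanishes on $N(t)=\tilde N_{\tilde\alpha(t)}(t)$. The orientation condition $*\varphi|_{N(t)}>0$ of Definition \ref{coassdfn} then holds because $*\varphi|_{\tilde N(t)}=\vol_{\tilde g(t)}>0$ by Proposition \ref{metricprop}, and $N(t)$ is $C^0$-close to $\tilde N(t)$ for $\tau$ small (using Lemma \ref{embedlem} and the bound on $\|\tilde\alpha(t)\|_{L^2_{4,-2+\delta,t}}$, which we revisit below).

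To obtain smoothness, observe that $\tilde\alpha(t)\in L^2_4\hookrightarrow C^1$, so $N(t)$ is initially only $C^1$. However, by Proposition \ref{defmapprop} the equation $F_t(\tilde\alpha(t))=0$ reads $\d\tilde\alpha(t)=-\varphi|_{\tilde N(t)}-Q_t(\tilde\alpha(t))$, where $Q_t$ depends smoothly on $\tilde\alpha(t)$ and $\nabla\tilde\alpha(t)$. Coupling with $\d^*$, the operator $\d+\d^*$ on $\Lambda^2_+\oplus\Lambda^4$ is elliptic, so a standard bootstrap in the weighted Sobolev spaces of Definition \ref{wspacedfn} upgrades $\tilde\alpha(t)$ to $C^\infty$; equivalently, once $\varphi|_{N(t)}=0$ and $N(t)$ is $C^1$, regularity for coassociative submanifolds applies directly. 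Since $\tilde N(t)$ is compact and $N(t)$ is a graphical deformation of it, $N(t)$ is a smooth compact coassociative 4-fold.

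For the convergence $N(t)\to N$ in the sense of currents, we note $\tilde N(t)\to N$ in currents by Definition \ref{tildeNdfn}, so it suffices to show that $N(t)$ and $\tilde N(t)$ become indistinguishable as currents as $t\to 0$. Combining Lemma \ref{embedlem} with the bound $\|\tilde\alpha(t)\|_{L^2_{4,-2+\delta,t}}\leq t^\kappa$ gives $\|\tilde\alpha(t)\|_{C^1_{1,t}}\leq ct^{-3+\delta+\kappa}$, hence $\sup_{\tilde N(t)}|\tilde\alpha(t)|\leq \epsilon c\,t^{-3+\delta+\kappa}$. The choice $\kappa>3-\nu\delta$ in Theorem \ref{fpthm} ensures $-3+\delta+\kappa>\delta(1-\nu)\geq 0$ strictly, so this quantity tends to zero as $t\to 0$; thus $N(t)$ collapses onto $\tilde N(t)$ in $C^0$ and hence $\int_{N(t)}\omega\to\int_{\tilde N(t)}\omega\to\int_N\omega$ for any smooth compactly supported 4-form $\omega$ on $M$.

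The main obstacle in the whole program was producing the fixed point in Theorem \ref{fpthm}; the remaining work for this corollary is the bootstrap for smoothness in the weighted Sobolev framework, which runs parallel to \cite[$\S$6]{Lotaydesing} and introduces no new ingredient beyond what has already been established.
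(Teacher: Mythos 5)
Your proposal is correct and follows essentially the same route as the paper: the paper's proof likewise invokes the Sobolev embedding $L^2_4\hookrightarrow C^{1,a}$ and the standard elliptic bootstrap (citing the method of \cite[Proposition 7.16]{Lotaydesing}) for smoothness, and treats the convergence $N(t)\rightarrow N$ as immediate from the definition of $\tilde{N}(t)$ together with the smallness of $\tilde{\alpha}(t)$. Your explicit verification that $-3+\delta+\kappa>0$, so that $\sup|\tilde{\alpha}(t)|\rightarrow 0$, is exactly the quantitative content behind the paper's brief final sentence.
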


\begin{proof}
Since $L^2_4\hookrightarrow C^{1,a}$ by the Sobolev Embedding Theorem, we can apply the method of proof of 
\cite[Proposition 7.16]{Lotaydesing} to show that $\tilde{\alpha}(t)$ is smooth.  The result is now immediate by definition of $\tilde{N}(t)$.
\end{proof}

We can now deduce Corollary \ref{mainthm2} 
since we can 
parameterize the zeros of $F_t$ near $\tilde{\alpha}(t)$ using closed self-dual 2-forms on $\tilde{N}(t)$.  

\begin{prop}\label{countthm} There is a smooth family of compact coassociative smoothings of $N$ of dimension
$b^2_+(A)+b^2_+(\hat{N})+\dim(\Imm j_2^A\cap\Imm j_2^N)$.
\end{prop}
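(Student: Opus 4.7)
The plan is to invoke Theorem \ref{desingthm} to obtain, for each $t\in(0,\tau)$, a smooth compact coassociative 4-fold $N(t)$ in $(M,\varphi)$, and then to combine this with two existing results. First, McLean's deformation theorem (Theorem \ref{cptdefthm}) tells us that the moduli space of compact coassociative deformations of $N(t)$ is a smooth manifold near $N(t)$ of dimension $b^2_+(N(t))$. Second, the topological computation in Theorem \ref{b2+thm} already identifies $b^2_+(\tilde{N}(t))$ with the desired sum. To bridge these, I would observe that $N(t)$ is the graph of $\tilde{\alpha}(t)$ over $\tilde{N}(t)$ via the diffeomorphism $f_{\tilde{\alpha}(t)}(t)$ of Definition \ref{defmapdfn}, so $N(t)$ is diffeomorphic to $\tilde{N}(t)$, and hence $b^2_+(N(t))=b^2_+(\tilde{N}(t))$. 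Applying Theorem \ref{b2+thm} then yields the stated dimension.

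A more self-contained alternative, which reads the family directly off the analytic framework used to prove Theorem \ref{mainthm1}, proceeds through the Implicit Function Theorem as foreshadowed in Definition \ref{ctdfn}. The fixed point $\tilde{\alpha}(t)$ is a zero of $F_t$, and Proposition \ref{defmapprop} shows that the linearisation of $F_t$ at any point is $\d$ plus terms quadratic in $\alpha$ coming from $Q_t$. By Theorem \ref{invthm}, the restriction of $\d$ to $\tilde{\mathcal{K}}_{\text{ap}}(t)^{\perp}$ is an isomorphism onto its image. Thus the Implicit Function Theorem parameterises $\Ker F_t$ near $\tilde{\alpha}(t)$ smoothly by $\tilde{\mathcal{K}}_{\text{ap}}(t)$, whose dimension equals $b^2_+(A)+b^2_+(\hat{N})+\dim(\Imm j_2^A\cap\Imm j_2^N)$ by the count following Definition \ref{apKerdfn}; this recovers the same answer without routing through Theorem \ref{cptdefthm}.

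There is no serious obstacle once Theorems \ref{desingthm} and \ref{b2+thm} are in hand. The only points requiring minor care are verifying that the $L^2_4$ solutions produced by the Implicit Function Theorem are genuinely smooth coassociative deformations (standard elliptic regularity, as already used in Theorem \ref{desingthm} via the method of \cite[Proposition 7.16]{Lotaydesing}), and checking that the identification $\tilde{\mathcal{K}}_{\text{ap}}(t)\cong\mathcal{H}^2_+(\tilde{N}(t))$ used to interpret the parameter space as $H^2_+$ is compatible with the $L^2$-projection; both are immediate from the construction of $\tilde{\mathcal{K}}_{\text{ap}}(t)$ and the proof of Theorem \ref{invthm}.
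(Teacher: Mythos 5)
Your second ``self-contained alternative'' is precisely the paper's argument: the one-sentence justification preceding Proposition~\ref{countthm} (together with the observation in Definition~\ref{ctdfn}) parameterises $\Ker F_t$ near the fixed point $\tilde{\alpha}(t)$ by $\tilde{\mathcal{K}}_{\text{ap}}(t)\cong\mathcal{H}^2_+(\tilde{N}(t))$ via the Implicit Function Theorem, with Theorem~\ref{invthm} supplying the required invertibility of $\d$ on the orthogonal complement, and the dimension then comes from the count after Definition~\ref{apKerdfn} (i.e.\ Theorem~\ref{b2+thm}).

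Your first route is a genuinely different (and if anything more economical) argument. Instead of re-running the implicit function theorem inside the gluing framework, you take the output $N(t)$ of Theorem~\ref{desingthm} as a black box, apply McLean's unobstructedness theorem (Theorem~\ref{cptdefthm}) to the honest compact coassociative 4-fold $N(t)$ to obtain a smooth moduli space of dimension $b^2_+(N(t))$, and then use that $b^2_+$ is a diffeomorphism (indeed homotopy) invariant together with $N(t)\cong\tilde{N}(t)$ to reduce the dimension count to Theorem~\ref{b2+thm}. What the paper's route buys is uniform control: parameterising $\Ker F_t$ by $\tilde{\mathcal{K}}_{\text{ap}}(t)$ keeps the family attached to the concrete gluing data and the weighted Sobolev estimates, which is what is actually exploited in $\S5$ when the gluing map $G$ is compared against $\mathcal{M}(X)$. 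What your McLean route buys is brevity and independence from the approximate-kernel machinery. Both are correct; the small points of care you flag (elliptic regularity for the $L^2_4$ solutions, and the compatibility of the identification $\tilde{\mathcal{K}}_{\text{ap}}(t)\cong\mathcal{H}^2_+(\tilde{N}(t))$ with the $L^2$-projection) are exactly the right ones and are already handled in the proofs of Theorems~\ref{invthm} and~\ref{desingthm}.
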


\section{Applications}

In this section we give some applications of our main results.  We first use these results to describe the relationship between the moduli space of ``matching pairs'' of AC and CS coassociative 
4-folds which can 
be used in our desingularization and the moduli
 space of the smooth compact coassociative 4-fold we construct.  This work leads us to deduce Proposition \ref{mainthm4} which
gives evidence, in the stable case, for local surjectivity of our gluing; i.e.~that all nearby smooth coassociative 4-folds to the given CS coassociative 4-fold arise 
from our desingularization method.  We then discuss 
 examples where our theory applies and consequences.

\subsection{Moduli spaces}

We now describe how the moduli spaces of the CS and AC building blocks ``fit together'' with the moduli space of smoothings, in 
a similar manner to \cite[$\S$8]{JoyceCS5}.

Suppose we have an almost $\GG_2$ manifold $M$ and a matching pair of a CS coassociative 4-fold $N\subseteq M$ and an AC coassociative 4-fold $A\subseteq\R^7$ with asymptotic cone $C\cong\R^+\times\Sigma$ 
to which Theorem \ref{mainthm1} applies. 
Hence we have $\tau>0$ and smooth compact coassociative 4-folds $N(t)$ for $t\in(0,\tau)$ such that $N(t)\rightarrow N$ as $t\rightarrow 0$.

Notice that we are free to re-scale $A$ and maintain both its AC convergence to $C$ and the matching condition with $N$.  Moreover, since $t\in(0,\tau)$ determines the scale of $A$ used in the gluing to construct $N(t)$, namely that we form the connect sum of $tA$ and $N$, if we choose $sA$ instead of $A$ in the 
desingularization, then we are allowed now to use $t\in(0,\frac{\tau}{s})$.  In other words, we can vary $\tau$ by re-scaling our initial choice of $A$ so, in some sense, there is not a natural scale in the problem.
  To remedy this, we can make 
$\tau$ canonical for our initial choice $A$ by taking the supremum, which will be finite.  
Using an appropriate dilation we may re-scale $A$ such that 
$\tau=2$.   All of the $N(t)$ are diffeomorphic to the same compact coassociative 4-fold, so we set $X=N(1)$ for definiteness.  In this way, we have fixed the 
scale.

  We make a series of definitions of the moduli spaces for convenience. 

\begin{dfn}\label{moduliNdfn}  Let $\mathcal{M}(N)$ denote the moduli space of CS coassociative deformations of $N$ with cone $C$ and rate $\mu_0$, where 
$\mu_0\leq\mu$ with $(1,\mu_0]\cap\mathcal{D}=\emptyset$.  The CS coassociative 4-folds $N^{\prime}$ in $\mathcal{M}(N)$ are deformations of $N$ which have 
the same cone $C$ and rate $\mu_0$ at their conical singularity $z'$ but $z'$ is not require to coincide with the singularity $z$ of $N$.  The deformation theory of $N$ can be obstructed by the work in \cite{LotayCS}, so $\mathcal{M}(N)$ is not a manifold in general.
\end{dfn}

Let $N'\in\mathcal{M}(N)$.  Recall that we have a map 
 $j^{N'}_2:H^2(\hat{N'})\rightarrow H^2(\Sigma)$ as in \eq{Nseq}.  Moreover, $j_{N'}^2=i_{N'}^*$ where $i_{N'}:\Sigma\rightarrow N'$ is an inclusion map.  More 
 precisely, for $N$ we can take $i_N(\sigma)=\Psi_N(\frac{\epsilon}{2},\sigma)$, using the notation of Definition \ref{csdfn}.  As $N'\in\mathcal{M}(N)$, 
there exists a diffeomorphism $f:\hat{N}\rightarrow \hat{N'}$ which sends the end of $\hat{N}$ to the end of $\hat{N'}$ and preserves the convergence to the 
singular point.  Hence, we may take $i_{N'}=f\circ i_n$, which implies that $j_{N'}^2=i_{N'}^*=i_N^*\circ f^*=j_N^2\circ f^*$.  Since 
$f^*:H^2(\hat{N'})\rightarrow H^2(\hat{N})$ is an isomorphism, this means that $\Imm j_{N'}^2=\Imm j_{N}^2$.  
 We clearly have the analogous result that if $A'$ is an AC deformation of $A$ with the same cone and rate of convergence then $\Imm j_{A'}^2=\Imm j_A^2$.

Recall that for any coassociative 4-fold $A^{\prime}$ in $\R^7$, $\varphi_0$ defines an element $[\varphi_0]\in H^2(A^{\prime})\cong H^3(\R^7;A^{\prime})$.  
 By Proposition \ref{infdilprop}, we know that if $v$ is the dilation vector field on $\R^7$ and $u'$ is the normal projection of $v|_{A'}$ then 
 $\jmath_{A'}(u')=u'\lrcorner\varphi_0|_{A'}$ is closed and $[\jmath_{A'}(u')]=3[\varphi_0]\in H^2(A^{\prime})$.  Now, although $\Imm j_{A'}^2=\Imm j_A^2$, it 
 is possible that $j_{A'}^2[\varphi_0]\neq j_A^2[\varphi_0]\in H^2(\Sigma)$: for example, if $A'$ is a dilation of $A$ then $j_{A'}^2[\varphi_0]$ is 
a multiple of $j_A^2[\varphi_0]$.  This leads us to the following definition. 

\begin{dfn}\label{moduliAdfn}
Let $\mathcal{M}(A)$ denote the moduli space of AC coassociative deformations $A^{\prime}$ of $A$ with cone $C$ and rate $\lambda_0$, where 
$\lambda_0\geq\lambda$ with $[\lambda_0,-\frac{1}{2})\cap\mathcal{D}=\emptyset$, such that $j^{A^{\prime}}_2[\varphi_0]\in \Imm j_2^A\cap \Imm j_2^N
\subseteq H^2(\Sigma)$.
\end{dfn}

The point of this definition is that pairs $(N^{\prime},A^{\prime})\in\mathcal{M}(N)\times\mathcal{M}(A)$ satisfy the topological matching condition as well 
as the constraint on the AC rate of convergence to $C$.  Therefore, the set of gluing data near $(N,A)$ 
for which we can apply Theorem \ref{mainthm1} is a subset of 
$\mathcal{M}(N)\times\mathcal{M}(A)$.  By Theorem \ref{ACdefthm}, if we did not have the topological constraint on the deformations $A'$, then 
the moduli space of $A$ would be a smooth manifold with known dimension.  We shall see below that we can extend Theorem \ref{ACdefthm} in a straightforward way to 
show that $\mathcal{M}(A)$ is indeed a manifold.

We now conclude our moduli space definitions.

\begin{dfn}\label{moduliXdfn}
Let $\mathcal{M}(X)$ denote the moduli space of compact coassociative deformations of $X$, the coassociative 4-fold arising from gluing 
$N$ and $A$ as described above.  Theorems \ref{cptdefthm} and \ref{b2+thm} state that 
 $\mathcal{M}(X)$ is a smooth manifold of dimension $b^2_+(X)=b^2_+(\hat{N})+b^2_+(A)+\dim(\Imm j_2^A\cap\Imm j_2^N)$. 
\end{dfn}

By Theorem \ref{mainthm1}, we have a natural map from the gluing data into $\mathcal{M}(X)$; that is, given a pair $(N',A')\in\mathcal{M}(N)\times\mathcal{M}(A)$ 
we can define a deformation of $X$ by gluing $N'$ and $A'$.  Since we desingularize $N$ using $A$ to get $X$ it is natural to ask whether
 we can construct all compact coassociative 4-folds near $X$ via gluing; that is, whether the gluing map is a local diffeomorphism.  
In general this should not be possible, and the first thing to compare is the dimensions of $\mathcal{M}(N)$, $\mathcal{M}(A)$ and $\mathcal{M}(X)$.

We begin by recalling the description of $\mathcal{M}(N)$ from \cite{LotayCS}.

\begin{thm}\label{CSdefthm}  There exist finite-dimensional vector spaces of forms $\mathcal{I}(N)$ and $\mathcal{O}(N)$, an open neighbourhood 
$\hat{\mathcal{M}}(N)$ of 
$0$ in $\mathcal{I}(N)$ and a smooth map $\pi:\hat{\mathcal{M}}(N)\rightarrow\mathcal{O}(N)$ such that 
 $\mathcal{M}(N)$ near $N$ is locally homeomorphic to $\pi^{-1}(0)$ near $0$.  Moreover, the expected dimension 
of $\mathcal{M}(N)$ is 
\begin{equation*}
b^2_+(\hat{N})-
\!\!\!\!\!\!\sum_{\upsilon\in(-2,-1]}\!\!\!\!\!\!d_{\mathcal{D}}(\upsilon)-\ind(C),
\end{equation*}
where the stability index $\ind(C)$ is given in Definition \ref{stabdfn}. 
\end{thm}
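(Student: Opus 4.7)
The plan is to generalize McLean's deformation-theoretic approach (Theorem \ref{cptdefthm}) from the compact case to conically singular 4-folds using weighted Sobolev spaces, while augmenting the deformation parameter space by the freedom to move the singularity $z$ in $M$ and to act on the model cone by $\GG_2$. Concretely, I would parameterize candidate CS coassociative deformations of $N$ (with cone $C$ and rate $\mu_0$) by triples $(p, \mathfrak{c}, \alpha)$, with $p \in M$ near $z$, $\mathfrak{c}$ a $\GG_2$-valued parameter adjusting the local identification $T_p M \cong \R^7$, and $\alpha$ a small self-dual 2-form on $\hat{N}$ in the weighted Sobolev space $L^2_{k,\mu_0}(\Lambda^2_+ T^*\hat{N})$ for $k \geq 4$. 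Using Proposition \ref{jmathprop} and the tubular neighbourhood construction of Proposition \ref{NAnbdprop}, adapted so that the coordinate chart $\chi$ may be re-centred at $p$ and rotated by $\mathfrak{c}$, the data determines a candidate CS submanifold, and pulling back $\varphi$ to it defines a smooth map $F$ into the weighted space $L^2_{k-1,\mu_0-1}(\Lambda^3 T^*\hat{N})$ of exact 3-forms whose zeros give precisely the CS coassociative deformations of interest.

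Linearizing $F$ at $(z,\id,0)$, the $\alpha$-derivative is $d$ acting on self-dual 2-forms (by Proposition \ref{infdefprop}), while the $(p,\mathfrak{c})$-derivative contributes the infinitesimal coassociative deformations generated by the $\GG_2\ltimes\R^7$ action on $C$. By Lockhart--McOwen theory, the resulting augmented operator is Fredholm whenever $\mu_0 \notin \mathcal{D}$. I would define $\mathcal{I}(N)$ to be its kernel (equivalently, the space of infinitesimal CS coassociative deformations decaying at rate $\mu_0$, together with the cone-motion generators not already absorbed into $L^2_{k,\mu_0}$) and take $\mathcal{O}(N)=\mathcal{O}(N,\mu_0)$ from Definition \ref{obstructdfn} as a complement to the image. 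A standard Kuranishi reduction, obtained by composing a right inverse for the linearization (on $L^2$-complements of $\mathcal{I}(N)$ and $\mathcal{O}(N)$) with the implicit function theorem on weighted Banach spaces, then produces a smooth map $\pi: \hat{\mathcal{M}}(N) \to \mathcal{O}(N)$ on a neighbourhood $\hat{\mathcal{M}}(N) \subseteq \mathcal{I}(N)$ of $0$, whose zero set is locally homeomorphic to $\mathcal{M}(N)$ near $N$; elliptic regularity for $d_+ + d^*$ on weighted spaces upgrades the representative deformations to smooth forms.

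For the dimension count, the expected dimension $\dim\mathcal{I}(N) - \dim\mathcal{O}(N)$ equals the Fredholm index of the augmented linearization, which I would compute by combining Proposition \ref{kerOprop} with the Lockhart--McOwen jump formula. At a rate $\mu_0$ just below $-1$, Proposition \ref{kerOprop} gives the dimension of the kernel of $d$ acting on self-dual 2-forms in $L^2_{k,\mu_0}$ as $b^2_+(\hat{N}) + \dim\mathcal{O}(N,\mu_0) - \sum_{\upsilon \in (-2,\mu_0)} d_\mathcal{D}(\upsilon)$. Augmenting by the $(p,\mathfrak{c})$-parameters adds $\dim\mathcal{C}$ to the kernel dimension but also, by the jump formula, subtracts $\sum_{\upsilon\in(-1,1]} d_\mathcal{D}(\upsilon)$ coming from the indicial roots in the rest of $(-2,1]$ effectively crossed as these parameters supply the missing conical motions. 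Combining these via the identity $\ind(C) = \sum_{\upsilon\in(-1,1]} d_\mathcal{D}(\upsilon) - \dim\mathcal{C}$ of Definition \ref{stabdfn} yields the claimed expected dimension $b^2_+(\hat{N}) - \sum_{\upsilon\in(-2,-1]} d_\mathcal{D}(\upsilon) - \ind(C)$. The hard part is precisely this bookkeeping: one must identify, at each indicial rate $\upsilon\in(-2,1]$, which homogeneous closed self-dual 2-forms on $C$ lift to genuine CS coassociative deformations of $\hat{N}$ (contributing to $\mathcal{I}(N)$), which are obstructed (contributing to $\mathcal{O}(N)$), and which correspond to trivial $\GG_2\ltimes\R^7$ symmetries absorbed by the $(p,\mathfrak{c})$-parameters, in analogy with the cleaner AC case of Theorem \ref{ACdefthm} but with the added complication of the non-trivial obstruction space $\mathcal{O}(N,\mu_0)$.
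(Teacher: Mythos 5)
The paper does not actually reprove this statement. It is explicitly ``recalled'' from \cite{LotayCS}, which supplies the Kuranishi structure $(\mathcal{I}(N),\mathcal{O}(N),\pi)$; the only new content here, as Remark (a) after the theorem says, is that Proposition \ref{kerOprop} (together with Propositions \ref{cccsprop} and Lemma \ref{mu-lem}) upgrades the dimension formula from a lower bound to an exact equality. Your from-scratch re-derivation is the right kind of argument in spirit --- augmenting the deformation map by the $(p,\mathfrak{c})$-parameters that move the singular point and rotate the local model, then running a weighted implicit-function / Kuranishi reduction, is indeed the strategy of \cite{LotayCS} and \cite{Lotaystab} --- but the dimension bookkeeping has two genuine problems.

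First, the rate $\mu_0$ in Definition \ref{modulidfn} lies in $(1,2)$, while you invoke Proposition \ref{kerOprop} ``at a rate $\mu_0$ just below $-1$.'' Proposition \ref{kerOprop} is only stated for $\mu_+\in(-2,0)\setminus\mathcal{D}$, so it cannot be applied directly at the rate defining $\mathcal{M}(N)$. To compute the index of the actual CS deformation operator at rate $\mu_0\in(1,2)$ you must track the Lockhart--McOwen jumps across \emph{all} of $\mathcal{D}\cap(-2,\mu_0)$ --- in particular across the critical rates $0$ and $1$, where the added $(p,\mathfrak{c})$-parameters live --- not just apply Proposition \ref{kerOprop} and then add $\dim\mathcal{C}$. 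Second, even taking your reading literally, with the rate ``just below $-1$'' the sum $\sum_{\upsilon\in(-2,\mu_0)}d_{\mathcal{D}}(\upsilon)$ omits $\upsilon=-1$, whereas the theorem's sum is over $(-2,-1]$; unwinding $\ind(C)=\sum_{\upsilon\in(-1,1]}d_{\mathcal{D}}(\upsilon)-\dim\mathcal{C}$ shows the expected dimension is $b^2_+(\hat{N})-\sum_{\upsilon\in(-2,1]}d_{\mathcal{D}}(\upsilon)+\dim\mathcal{C}$, and your count comes out too large by $d_{\mathcal{D}}(-1)$. The split into $(-2,-1]$ and $(-1,1]$ in the statement is nothing but a repackaging of $\sum_{\upsilon\in(-2,1]}d_{\mathcal{D}}(\upsilon)-\dim\mathcal{C}$ via the definition of $\ind(C)$; it is not a feature produced by the jump formula at the particular rate $-1$. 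Finally, note that $\mathcal{O}(N,\mu_+)$ in Proposition \ref{kerOprop} is defined for $\mu_+\in(-2,0)$, so some argument (such as Proposition \ref{cccsprop}) is needed to relate it to the obstruction space relevant at rate $\mu_0\in(1,2)$; your proposal identifies $\mathcal{O}(N)=\mathcal{O}(N,\mu_0)$ without addressing this range discrepancy.
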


\begin{proof}
The only difference between this result and the work in \cite{LotayCS} is that in \cite{LotayCS} 
just a lower bound was given for the expected dimension, but we can improve this using the work in this paper and in \cite{Lotaystab}. The work in \cite[$\S$7-$\S$8]{LotayCS} 
states that 
$$\dim\mathcal{I}(N)-\dim\mathcal{O}(N)=\dim\Ker(\d_++\d^*)_{\mu}-\dim\Ker(\d^*_++\d)_{\mu}+\dim\mathcal{H}^3_{\mu}+\dim\mathcal{C}$$
where $(\d_++\d^*)_{\mu}$ acts on $L^2_{4,\mu}(\Lambda^2_+T^*\hat{N}\oplus\Lambda^4T^*\hat{N})$ by $(\alpha,\beta)\mapsto \d\alpha+\d^*\beta$ 
as in \eq{lindefmap}, $(\d^*_++\d)_{\mu}$ acts on $L^2_{4,-3-\mu}(\Lambda^3T^*\hat{N})$ as $\gamma\mapsto (\d^*_+\gamma,\d\gamma)$, $\mathcal{H}^3_{\mu}$ is 
the space of closed and coclosed 3-forms in $L^2_{4,-3-\mu}(\Lambda^3T^*\hat{N})$ as in \eq{H3eq} and $\mathcal{C}$ is the orbit of the cone $C$ under the 
action of $\GG_2\times\R^7$.  

To understand this, first observe that if $(\alpha,\beta)\in\Ker(\d_++\d^*)_{\mu}$ then $\d\alpha+\d^*\beta=0$  
implies $\beta$ is harmonic.  Since $\beta$ tends to zero at the singular point (as $\mu>0$), the maximum principle forces $\beta=0$ and hence 
$\Ker(\d_++\d^*)_{\mu}$ is isomorphic to $\Ker(\d_+)_{\mu}$, the closed self-dual 2-forms in $L^2_{4,\upsilon}$.  Second, 
$\Ker(\d^*_++\d)_{\mu}$ is isomorphic to 
the cokernel of $(\d_++\d^*)_{\mu}$ and $\mathcal{O}(N)$ is a subspace of the cokernel which is contained in $\d(L^2_{4,\mu}(\Lambda^2T^*N))$.  
Hence $\mathcal{O}(N)$ is transverse to the cokernel of $(\alpha,\beta)\mapsto \d\alpha+\d^*\beta$ acting on 
$L^2_{4,\mu}(\Lambda^2T^*N\oplus\Lambda^4T^*N)$, and this cokernel is isomorphic to $\mathcal{H}^3_{\mu}$.  The fact that we have a moduli space $\mathcal{M}(N)$ 
is such that the singularity $z'$ of $N'$ is not required to be at the same point as the singularity of $N$ (which corresponds to allowing translations of $C$) 
and we are free to make different identifications for the tangent cone of $N'$ at $z'$ with $C$ (which amounts to choosing an element of $\GG_2$) means that 
the dimension of the obstruction space is reduced by $\dim\mathcal{C}$ as claimed.

Let $\mu_+\in(-2,0)$ be such that $[\mu_+,0)\cap\mathcal{D}=\emptyset$.  
Now, using Proposition \ref{kerOprop}, we know the dimension of the kernel of $\d$ on $L^2_{4,\mu_+}(\Lambda^2_+T^*\hat{N})$
and if $(\alpha,\beta)\in\Ker(\d_++\d^*)_{\mu_+}$ then $\beta$ is again harmonic and so must be constant (since the least negative growth rate that a harmonic
 4-form can have on a 4-manifold is $-2$).  We also have by Proposition \ref{cccsprop} and Lemma \ref{mu-lem} that
 $\Ker(\d^*_++\d)_{\mu_+}-\dim\mathcal{H}^3_{\mu_+}=\dim\mathcal{O}(N,\mu_+)$.  

As shown in \cite{LotayCS}, the index formula from \cite{LockhartMcOwen} in this 
situation states that
 \begin{align*}\dim\Ker(\d_++&\d^*)_{\mu}-\dim\Ker(\d^*_++\d)_{\mu}\\
&=\dim\Ker(\d_++\d^*)_{\mu_+}-\dim\Ker(\d^*_++\d)_{\mu_+}-\!\!\!\sum_{\upsilon\in[0,1]}\!\!\!d_{\mathcal{E}}(\upsilon)
\end{align*}
 where $d_{\mathcal{E}}(\upsilon)$ is the dimension of homogeneous forms of rate $\upsilon$ 
in $\Lambda^2_+T^*C\oplus\Lambda^4T^*C$ which are in the kernel of $\d_++\d^*$.  However, it is shown in the proof of \cite[Proposition 4.11]{Lotaystab} that 
any such homogeneous form on $C$ which is transverse to the closed self-dual 2-forms of rate $\upsilon$ corresponds to a form which lifts, in the sense 
we have used before, either to a constant 4-form on $\hat{N}$ or to a closed and coclosed 3-form on $\hat{N}$ of rate $-3-\upsilon$.  

Putting this information together, we see that 
\begin{align*}
\dim\Ker(\d_++\d^*)_{\mu}&-\dim\Ker(\d^*_++\d)_{\mu}+\dim\mathcal{H}^3_{\mu}\\
&=\dim\Ker(\d_+)_{\mu_+}-\dim\mathcal{O}(N,\mu_+)-\!\!\!\sum_{\upsilon\in[0,1]}
\!\!\!d_{\mathcal{D}}(\upsilon)\\
&=b^2_+(\hat{N})-
\!\!\!\!\sum_{\upsilon\in(-2,1]}\!\!\!\!d_{\mathcal{D}}(\upsilon).
\end{align*}
The fact that $\mathcal{O}(N)$ is decreased by $\dim\mathcal{C}$ and the definition of the stability index $\ind(C)$ in Definition \ref{stabdfn} gives the result.
\end{proof}

\begin{remark}
As discussed in \cite{Lotaystab}, it is possible to generalize the deformation theory of $N$ so that the cone at the singularity deforms in a family $\mathcal{C}$
 and the analogous result to Theorem \ref{CSdefthm} holds with the stability index replaced by the $\mathcal{C}$-stability index.
\end{remark}

We can easily calculate the expected difference in $\dim\mathcal{M}(X)$ and $\dim\mathcal{M}(N)$:
\begin{equation}\label{codimeq}
b^2_+(A)+\dim(\Imm j_2^A\cap\Imm j_2^N)+\!\!\!\!\!\!\sum_{\upsilon\in(-2,-1]}\!\!\!\!\!\!d_{\mathcal{D}}(\upsilon)+\ind(C).
\end{equation}
We deduce 
that the higher the stability index of $C$, the ``less likely'' a compact coassociative 4-fold is going to develop a conical 
singularity modelled on $C$.  More precisely, if $N$ arises as the limit of a family of compact coassociative 4-folds diffeomorphic to $X$, then 
we may view $\mathcal{M}(N)$ as contained in the compactification $\overline{\mathcal{M}(X)}$ 
of the moduli space $\mathcal{M}(X)$, for example by viewing $X$ as an integral current and taking $\overline{\mathcal{M}(X)}$ to be the closure of $\mathcal{M}(X)$ in the
 space of integral currents, which then contains $\mathcal{M}(N)$ as CS coassociative 4-folds are integral currents.  
(See \cite[$\S$2.1]{Lotaystab}, for example, for a discussion of coassociative integral currents.)  The expected codimension of 
$\mathcal{M}(N)$ in the compactified moduli space $\overline{\mathcal{M}(X)}$ is then given by \eq{codimeq} and hence will be larger if $\ind(C)$ is larger, which indicates 
that it should be more difficult to exhibit conical singularities developing when the stability index of the cone is higher.

We now use the deformation theory for $A$ from \cite{LotayAC} to describe $\mathcal{M}(A)$.

\begin{thm}\label{ACdefthm2}
The space $\mathcal{M}(A)$ is a smooth manifold near $A$ of dimension
$$b^2_+(A)+\dim(\Imm j_2^A\cap\Imm j_2^N)+
\!\!\!\!\!\!\sum_{\upsilon\in(-2,-\frac{1}{2})}\!\!\!\!\!\!d_{\mathcal{D}}(\upsilon).$$
\end{thm}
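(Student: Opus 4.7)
The plan is to realise $\mathcal{M}(A)$ as the zero set of a single smooth constraint on the larger moduli space $\mathcal{M}^{\lambda_0}(A)$ provided by Theorem \ref{ACdefthm}, via the Implicit Function Theorem.

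First, since $\lambda_0\in(-2,0)\setminus\mathcal{D}$ with $\lambda_0\geq\lambda$, Theorem \ref{ACdefthm} gives a smooth moduli space $\mathcal{M}^{\lambda_0}(A)$ of (unconstrained) AC coassociative deformations of $A$ with rate $\lambda_0$ and cone $C$, of dimension
\begin{equation*}
b^2_+(A)+\dim\Imm j_2^A+\!\!\!\sum_{\upsilon\in(-2,-\frac{1}{2})}\!\!\!d_{\mathcal{D}}(\upsilon),
\end{equation*}
where the gap assumption $[\lambda_0,-\frac{1}{2})\cap\mathcal{D}=\emptyset$ is used to replace $\sum_{\upsilon\in(-2,\lambda_0)}$ by $\sum_{\upsilon\in(-2,-\frac{1}{2})}$. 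Any $A'\in\mathcal{M}^{\lambda_0}(A)$ near $A$ is diffeomorphic to $A$, so we may coherently identify $H^2(A')$, $H^2(\Sigma)$, $H^3(\R^7;A')$ and the induced map $j_2^{A'}$ with those for $A$; since $A'$ is coassociative, $\varphi_0|_{A'}\equiv 0$ and hence $[\varphi_0]\in H^3(\R^7;A')\cong H^2(A')$ remains defined throughout the family. This yields a smooth map
\begin{equation*}
\Phi\colon \mathcal{M}^{\lambda_0}(A)\longrightarrow \Imm j_2^A\,\big/\,(\Imm j_2^A\cap\Imm j_2^N),\qquad A'\longmapsto[j_2^{A'}[\varphi_0]],
\end{equation*}
whose image lies in $\Imm j_2^A/(\Imm j_2^A\cap\Imm j_2^N)$ and for which, by construction, $\mathcal{M}(A)=\Phi^{-1}(0)$ near $A$.

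The main step is to show $\d\Phi|_A$ is surjective. By Proposition \ref{infdilprop}, $j_2^{A'}[\varphi_0]$ equals the $H^2(\Sigma)$-cohomology class of the rate-$(-2)$ homogeneous leading term $\alpha_{\Sigma}^{0,A'}\in D(-2)$ of the asymptotic defining form of $A'$ over $C$. Differentiating this assignment at $A$ shows that $\d\Phi|_A$ sends a tangent vector, realised as a closed self-dual 2-form $\alpha$ on $A$ of decay order $O(r^{\lambda_0-j})$, to the $H^2(\Sigma)$-class of its rate-$(-2)$ leading coefficient, modulo $\Imm j_2^A\cap\Imm j_2^N$. The proof of Theorem \ref{ACdefthm} in \cite{LotayAC} exhibits the summand of dimension $\dim\Imm j_2^A$ in $T_A\mathcal{M}^{\lambda_0}(A)$ precisely as the lifts to $A$ of the rate-$(-2)$ homogeneous closed self-dual 2-forms on $C$ whose classes in $H^2(\Sigma)$ fill out $\Imm j_2^A$; the remaining summands, $\mathcal{H}^2_+(A)$ and the pieces indexed by $\upsilon\in(-2,\lambda_0)$, have strictly faster or strictly slower asymptotic behaviour and thus vanishing rate-$(-2)$ leading coefficient. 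Consequently $\d\Phi|_A$ maps the $\Imm j_2^A$-summand isomorphically onto $\Imm j_2^A/(\Imm j_2^A\cap\Imm j_2^N)$, and is in particular surjective.

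The Implicit Function Theorem then presents $\mathcal{M}(A)=\Phi^{-1}(0)$ as a smooth submanifold of $\mathcal{M}^{\lambda_0}(A)$ near $A$ of codimension $\dim\Imm j_2^A-\dim(\Imm j_2^A\cap\Imm j_2^N)$; subtracting this from the dimension of $\mathcal{M}^{\lambda_0}(A)$ gives the claimed formula. The main obstacle is the surjectivity of $\d\Phi|_A$: one must match the abstract direct-sum decomposition of $T_A\mathcal{M}^{\lambda_0}(A)$ underlying Theorem \ref{ACdefthm} with the concrete geometric operation of extracting the rate-$(-2)$ asymptotic coefficient, which amounts to unpacking the lifting arguments of \cite{LotayAC} at the critical rate $-2$ and using Proposition \ref{infdilprop} to identify the resulting map with $j_2^{A'}[\varphi_0]$.
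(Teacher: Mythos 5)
Your proof is correct and takes essentially the same route as the paper: realise $\mathcal{M}(A)$ inside the unconstrained AC moduli space of Theorem \ref{ACdefthm} as the preimage of a submersion defined by $A'\mapsto j_2^{A'}[\varphi_0]$, and compute the codimension. The paper defines $\pi$ with target $\Imm j_2^A$ and takes $\pi^{-1}(\Imm j_2^A\cap\Imm j_2^N)$ rather than composing with the quotient, but that is a cosmetic difference; your elaboration of why $\d\pi|_A$ is surjective (isolating the rate $-2$ leading coefficient via Proposition \ref{infdilprop} and the structure of $T_A\hat{\mathcal{M}}(A)$) is just unpacking the reference to \cite{LotayAC} that the paper leaves implicit.
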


\begin{proof}
The moduli space $\hat{\mathcal{M}}(A)$ of AC coassociative deformations of $A$ with cone $C$ and rate $\lambda_0$ is a smooth manifold by 
Theorem \ref{ACdefthm}.  Moreover, the tangent space $T_{A}\hat{\mathcal{M}}(A)$ is isomorphic to the closed self-dual 2-forms on $A$ 
in $L^2_{4,\lambda_0}$. 

We may define a smooth map $\pi:\hat{\mathcal{M}}(A)\rightarrow \Imm j_2^A\subseteq H^2(\Sigma)$ by $\pi(A^{\prime})=j_2^{A^{\prime}}[\varphi_0]$.  
Then $\d\pi|_A:T_A\hat{\mathcal{M}}(A)\rightarrow\Imm j_2^A$ is surjective by the work in \cite{LotayAC}, as explained after Theorem 
\ref{ACdefthm}.  Thus $\pi$ is a submersion, so it follows that $\pi^{-1}(\Imm j_2^A\cap \Imm j_2^N)=\mathcal{M}(A)$ is a smooth manifold of the 
claimed dimension.
\end{proof}

Theorems \ref{CSdefthm} and \ref{ACdefthm2} show that if $\mathcal{M}(N)$ is smooth of the expected dimension, then 
$$\dim\mathcal{M}(N)+\dim\mathcal{M}(A)=\dim\mathcal{M}(X)-\left(\ind(C)-
\!\!\!\!\!\!\sum_{\upsilon\in(-1,-\frac{1}{2})}\!\!\!\!\!\!d_{\mathcal{D}}(\upsilon)\right).$$
We deduce from Definition \ref{stabdfn} that the quantity in brackets is non-negative and vanishes 
if and only if $C$ is stable.  We conclude that, unless $C$ is stable,  our gluing method can only at most generate a subset of $\mathcal{M}(X)$ near 
$X$. 

We therefore from now on restrict our attention to the situation where $C$ is stable, which corresponds, in some sense, to the most probable type of 
conical singularity to occur by \eq{codimeq}.  It follows from Theorem \ref{CSdefthm} that $\mathcal{M}(N)$ is smooth.  
  
Proposition \ref{mainthm3} and Theorem \ref{mainthm1} also imply that for any $(N^{\prime},A^{\prime})\in\mathcal{M}(N)\times\mathcal{M}(A)$ 
there exists $\tau(N^{\prime},A^{\prime})>0$ and smooth compact coassociative 4-folds $N^{\prime}(t)$ for $0<t<\tau(N^{\prime},A^{\prime})$ formed by 
gluing $N^{\prime}$ and $tA^{\prime}$ which converge to $N^{\prime}$ as $t\rightarrow 0$.  (We can make $\tau(N^{\prime},A^{\prime})$ canonical by taking the supremum again.)  
 Observe further that $A^{\prime}\in\mathcal{M}(A)$ implies that $tA^{\prime}\in\mathcal{M}(A)$ for all $t>0$ and we may choose $\tau(N^{\prime},A^{\prime})$ such 
that $t\tau(N^{\prime},tA^{\prime})=\tau(N^{\prime},A^{\prime})$.  We can thus make the following definition.

\begin{dfn}\label{gmapdfn} Assuming $C$ is stable and using the notation above, let
$$\mathcal{M}(N,A)=\big\{(N^{\prime},tA^{\prime})\in\mathcal{M}(N)\times\mathcal{M}(A)\,:\,t\in\big(0,\tau(N^{\prime},A^{\prime})\big)\big\},$$
which is the moduli space of ``matching pairs''.   
We can define a smooth map $G:\mathcal{M}(N,A)\rightarrow\mathcal{M}(X)$ by
 $G(N^{\prime},tA^{\prime})=N^{\prime}(t)$.
\end{dfn}  

Having defined our ``gluing map'' $G$, we can show Proposition \ref{mainthm4}.

\begin{prop}\label{gmapprop}
The map $G$ is a local diffeomorphism.
\end{prop}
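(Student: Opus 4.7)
The plan is to apply the inverse function theorem. Since $G$ is smooth by the smooth dependence of the fixed point $\tilde{\alpha}(t)$ in Theorem \ref{fpthm} on its parameters, it suffices to show that $dG$ is an isomorphism at a single point, say $(N, A) \in \mathcal{M}(N, A)$. First I would verify that the source and target dimensions agree. Using Theorems \ref{CSdefthm} and \ref{ACdefthm2}, together with the fact that stability of $C$ forces $\ind(C) = 0$ and $d_{\mathcal{D}}(\upsilon) = 0$ for $\upsilon \in (-1, 1) \setminus \{0\}$, the sum $\sum_{(-2, -1]} d_{\mathcal{D}}(\upsilon)$ appearing in $\dim \mathcal{M}(N)$ and the sum $\sum_{(-2, -1/2)} d_{\mathcal{D}}(\upsilon)$ appearing in $\dim \mathcal{M}(A)$ coincide and cancel when the dimensions are added, yielding
\[
\dim \mathcal{M}(N, A) = b^2_+(\hat{N}) + b^2_+(A) + \dim(\Imm j^A_2 \cap \Imm j^N_2) = b^2_+(X) = \dim \mathcal{M}(X)
\]
by Theorem \ref{b2+thm}. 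It is therefore enough to prove that $dG$ is injective at $(N, A)$.

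To compute $dG$ I would exploit the isomorphism $T_X \mathcal{M}(X) \cong \mathcal{H}^2_+(X) \cong \tilde{\mathcal{K}}_{\text{ap}}(t) = \tilde{\mathcal{K}}^A_{\text{ap}}(t) \oplus \tilde{\mathcal{K}}^N_{\text{ap}}(t) \oplus \tilde{\mathcal{K}}^{\mathcal{I}}(t)$ provided by Theorem \ref{invthm}. A tangent vector at $(N, A)$ is a pair $(\alpha_N, \alpha_A)$ of closed self-dual 2-forms on $\hat{N}$ and $A$ with the appropriate decay; by the proofs of Theorems \ref{CSdefthm} and \ref{ACdefthm2}, together with Proposition \ref{kerOprop} and stability of $C$, these decompose naturally into contributions from $\mathcal{K}^N$, $\mathcal{K}^A_-$ and $\mathcal{K}^{\mathcal{I}}$, with the cone-asymptotic directions on $A$ matched precisely to those on $\hat{N}$ through the matching condition. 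The gluing of Definition \ref{tildeNdfn}, applied to an infinitesimal variation of the pair, produces a closed self-dual 2-form on $\tilde{N}(t)$ built by exactly the same interpolation formulas used to define $\tilde{\mathcal{K}}_{\text{ap}}(t)$; after identifying $X$ with $\tilde{N}(t)$ via the diffeomorphism from Definition \ref{defmapdfn} and applying $L^2$-orthogonal projection to $\mathcal{H}^2_+(X)$, the matrix of $dG$ with respect to the decompositions above is the identity to leading order in $t$.

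The main obstacle will be estimating the off-diagonal error terms in this matrix, which arise from the interpolation region $[\tfrac{1}{2}t^{\nu}, t^{\nu}] \times \Sigma$ and from the perturbation $\tilde{\alpha}(t)$ distinguishing $\tilde{N}(t)$ from the genuine coassociative $X$. I expect these errors to be controllable by adapting Propositions \ref{phiestprop} and \ref{Qestprop} to the linearized setting and combining with the bound $\|P_{\d}\| \leq C(\tilde{N}) t^{-\delta(1-\nu)}$ from Theorem \ref{invthm}; the resulting exponents are strictly dominated by the diagonal contributions, exactly as in the choice of $\kappa$ in Theorem \ref{fpthm}, so for $\tau$ sufficiently small $dG$ is invertible. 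The inverse function theorem then gives that $G$ is a local diffeomorphism at $(N, A)$, and since the matching condition persists under small perturbations when $C$ is stable (by Proposition \ref{stabprop}(b) together with openness of the topological matching condition), the same argument applies at every nearby point in $\mathcal{M}(N, A)$, giving the local diffeomorphism property globally.
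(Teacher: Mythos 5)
Your proposal takes essentially the same route as the paper's proof: equate dimensions via Theorems \ref{CSdefthm}, \ref{ACdefthm2} and \ref{b2+thm} using stability of $C$, identify $T_N\mathcal{M}(N)\oplus T_A\mathcal{M}(A)$ with $T_X\mathcal{M}(X)$ through the approximate-kernel decomposition $\tilde{\mathcal{K}}_{\text{ap}}(t)=\tilde{\mathcal{K}}_{\text{ap}}^A(t)\oplus\tilde{\mathcal{K}}_{\text{ap}}^N(t)\oplus\tilde{\mathcal{K}}^{\mathcal{I}}(t)$, and conclude via the inverse function theorem. Where you go somewhat further is in explicitly flagging that $dG$ is the identity only up to $t$-dependent off-diagonal errors from the interpolation region and the perturbation $\tilde{\alpha}(t)$, and proposing to control them via Propositions \ref{phiestprop}--\ref{Qestprop} and the $\|P_{\d}\|$ bound; the paper instead identifies the ``natural coordinates'' on the three moduli spaces and directly asserts that $\d G|_{(N,A)}$ becomes the identity under this identification, leaving those estimates implicit.
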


\begin{proof}  
Consider $\d G|_{(N,A)}:T_N\mathcal{M}(N)\oplus T_A\mathcal{M}(A)\rightarrow T_X\mathcal{M}(X)$.  
Recall that 
$$T_X\mathcal{M}(X)\cong\{\alpha\in L^2_4(\Lambda^2_+T^*X)\,:\,\d\alpha=0\}$$ and that the proof of Theorem \ref{cptdefthm} implies that we can use these closed self-dual 2-forms to define natural coordinates on the moduli space $\mathcal{M}(X)$.

 A consequence of the work in \cite{LotayCS}, Theorem \ref{ACdefthm2} and the stability of $C$ is that, for $\delta>0$ such that $(-1,-1+\delta)\cap\mathcal{D}=\emptyset$, we have
\begin{gather*}
T_N\mathcal{M}(N)\cong\{\alpha\in L^2_{4,-1+\delta}(\Lambda^2_+T^*\hat{N})\,:\,\d\alpha=0\}\quad\text{and}\\
T_A\mathcal{M}(A)\cong\{\alpha\in L^2_{4,-1+\delta}(\Lambda^2_+T^*A)\,:\,\d\alpha=0,\,j_2^A[\alpha]\in\Imm j_2^N\}.
\end{gather*}
Moreover, we can use these spaces of closed self-dual 2-forms to define natural coordinates on $\mathcal{M}(N)$ and $\mathcal{M}(A)$ respectively.
  
In our construction of the approximation of the closed self-dual 2-forms on $X$ in Definition \ref{apKerdfn}, we used the same spaces of forms on $N$ and $A$ as above except with the 
weighted Sobolev space $L^2_{4,-2+\delta}$.  The crucial fact was that the topological condition that $j_2^A[\alpha]\in\Imm j_2^N$ enabled us to match closed 
self-dual 2-forms $\alpha$ on $A$ with decay of order $O(r^{-2})$ to closed self-dual 2-forms on $\hat{N}$ with the same decay, 
and thus effectively interpolate between them to construct 
our desired self-dual 2-form which is ``almost'' closed.  Since $C$ is stable, the analytic matching condition in Definition \ref{matchingdfn} is always satisfied, meaning in particular that any closed self-dual form on $A$ of order $O(r^{\upsilon})$ for $\upsilon\in(-2,-1]$ can be matched with a corresponding 
closed self-dual 2-form on $\hat{N}$.  

Moreover, we have from Theorems \ref{CSdefthm} and \ref{ACdefthm2} that
\begin{align*}
\dim\mathcal{M}(N)&=b^2_+(\hat{N})-
\!\!\!\!\!\!\sum_{\upsilon\in(-2,-1]}\!\!\!\!\!\!d_{\mathcal{D}}(\upsilon)\quad\text{and}\\
\dim\mathcal{M}(A)&=b^2_+(A)+\dim(\Imm j_2^A\cap\Imm j_2^N)+\!\!\!\!\!\!\sum_{\upsilon\in(-2,-1]}\!\!\!\!\!\!d_{\mathcal{D}}(\upsilon)
\end{align*}
since $d_{\mathcal{D}}(\upsilon)=0$ for $\upsilon\in(-1,-\frac{1}{2})$ by the stability of $C$. It therefore follows from Theorem \ref{b2+thm} that 
$$\dim\mathcal{M}(N)+\dim\mathcal{M}(A)=b^2_+(\hat{N})+b^2_+(A)+\dim(\Imm j_2^A\cap\Imm j_2^N)=\dim\mathcal{M}(X).$$

We conclude therefore, in the same way as for our approximate kernel in Definition \ref{apKerdfn}, that we may define a natural isomorphism 
between $T_N\mathcal{M}(N)\oplus T_A\mathcal{M}(A)$ and $T_X\mathcal{M}(X)$.  We may thus identify the product of the natural coordinates on $\mathcal{M}(N)$
and $\mathcal{M}(A)$ with the natural coordinates on $\mathcal{M}(X)$.   With this identification $\d G|_{(N,A)}$ becomes the identity map and hence $G$ 
is a local diffeomorphism.
\end{proof}  

Thus all compact coassociative 4-folds near $X$ arise via gluing in the stable case.  Moreover, 
Proposition \ref{gmapprop} suggests that all elements of $\mathcal{M}(X)$ ``sufficiently close'' to $\mathcal{M}(N)$, thought of as lying in the ``boundary'' of
compactified moduli space $\overline{\mathcal{M}(X)}$,  arise via the 
desingularization given by Theorem \ref{mainthm1}.  

\begin{remark}
It is also possible to extend our discussion of the stable case to where $C$ is $\mathcal{C}$-stable, as long as one knows that for every deformation $C^{\prime}$ of $C$ in $\mathcal{C}$ there is a corresponding deformation $A^{\prime}$ of $A$ which is AC to $C^{\prime}$.
\end{remark}

\subsection{Examples}

We now wish to discuss applications of our theory in examples.  We recall that to apply our results we need
\begin{itemize}
\item a coassociative 4-fold $N$, in an almost $\GG_2$ manifold $M$, with a conical singularity $z$ modelled on a cone $C\cong\R^+\times\Sigma$ and
\item a coassociative 4-fold $A\subseteq\R^7$ asymptotically conical with rate $\lambda<-\frac{1}{2}$ to $C$
\end{itemize} 
such that $A$ and $N$ satisfy the matching condition given in Definition \ref{matchingdfn}.  

A particular 
criterion for when this matching condition is satisfied is given 
in Proposition \ref{mainthm3}, namely that the topological matching condition holds (see Definition \ref{topmatchingdfn}) and the cone $C$ is $\mathcal{C}$-stable
 in the sense of Definition \ref{stabdfn}.  Recall that stability is related to the exceptional rates $\mathcal{D}$ given in Definition \ref{Ddfn}.  
Moreover, in the case when $\lambda\leq -2$ the matching condition is equivalent to the topological matching condition.

We now give examples of situations where we can apply our results and begin with a degenerate case.

\begin{ex}
Suppose we make the perverse choice that $N$ is smooth and $z$ is any point.  Then $C=\R^4$ and $\Sigma\cong\mathcal{S}^3$ so
 $b^1(\Sigma)=0$ and the topological matching condition will hold trivially.  
We can take $A=\R^4$ which is obviously AC with any negative rate, so the matching condition is satisfied.  Since $ b^2_+(A)=0$ and $b^2_+(\hat{N})=b^2_+(N)$, 
applying Corollary \ref{mainthm2} gives that there is a $b^2_+(N)$-dimensional deformation family of coassociative ``smoothings'' of $N$.  This 
 corresponds to the fact that our gluing construction will just give back $N$ in this case.
\end{ex}

Since $\R^4$ is stable by \cite[Corollary 5.7]{Lotaystab}, Proposition \ref{mainthm3} shows that 
Theorem \ref{mainthm1} would apply to gluing in any asymptotically planar but non-planar $A$ 
with rate $\lambda<-\frac{1}{2}$ into smooth $N$.  However, we can show that no such $A$ exists.

\begin{prop}\label{planeprop}
If a coassociative 4-fold $A$ in $\R^7$ is AC with rate $\lambda<0$ to a coassociative 4-plane $C$, then $A=C$.
\end{prop}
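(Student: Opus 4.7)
The plan is to exploit the fact that coassociative 4-folds in $\R^7$ are minimal submanifolds and to combine this with the maximum principle to force $A$ to lie inside $C$. Since $(\R^7,\varphi_0)$ is a $\GG_2$ manifold, coassociative 4-folds are calibrated by $*\varphi_0$, hence volume-minimizing, and in particular \emph{minimal}. As a standard consequence, for any affine function $f$ on $\R^7$ the restriction $f|_A$ is harmonic on $A$ with respect to the induced metric: writing $\Delta_A(f|_A)=\sum_i\langle \nabla^{\R^7}_{e_i}\nabla f,e_i\rangle - \langle H,\nabla f\rangle$, the Hessian of an affine function vanishes on $\R^7$ and the mean curvature $H$ vanishes by minimality, so $\Delta_A(f|_A)=0$.

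Next, since $C$ is a coassociative 4-plane, I would take the orthogonal decomposition $\R^7=C\oplus C^{\perp}$ with $C^{\perp}\cong\R^3$, and let $v_1,v_2,v_3:A\to\R$ denote the three linear coordinate functions on $C^{\perp}$ restricted to $A$. Each $v_i$ is harmonic on $A$ by the previous paragraph. The asymptotic estimate $|\Phi_A(r,\sigma)-\iota(r,\sigma)|=O(r^{\lambda})$ from Definition \ref{acdfn}, together with $v_i(\iota(r,\sigma))=0$ (since $\iota(r,\sigma)\in C$), yields $|v_i(\Phi_A(r,\sigma))|=O(r^{\lambda})\to 0$ as $r\to\infty$, using $\lambda<0$. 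Combined with boundedness of $v_i$ on the compact core $K_A$, each $v_i$ is a bounded harmonic function on $A$ that tends to zero at infinity. The strong maximum principle, applied on each connected component, then forces $v_i\equiv 0$: if $\sup v_i$ is attained at an interior point, $v_i$ is constant and the decay forces the constant to be zero; otherwise any maximising sequence must escape every compact set, and the decay gives $\sup v_i\leq 0$, with the symmetric argument for $\inf v_i$.

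Hence $A\subseteq C$. Since $A$ and $C$ are both $4$-dimensional, $A$ is open in $C$; and since the asymptotically conical condition (Definition \ref{acdfn}) forces $A$ to be properly embedded in $\R^7$, $A$ is also closed in $C$. As $C\cong\R^4$ is connected, this forces any nonempty component of $A$ to equal $C$, so $A=C$.

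The argument is essentially trivial once the strategy is spotted, and no fine analytic estimates on $\alpha_A$ from Proposition \ref{alphaAprop} are required. The only point warranting care is the formulation of the strong maximum principle in the non-compact setting, but this reduces to the classical local statement on each connected component together with the decay of $v_i$ at infinity; the $\lambda<0$ hypothesis is used only to guarantee this decay.
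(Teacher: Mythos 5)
Your proof is correct, but it takes a genuinely different route from the paper. The paper first uses Proposition \ref{alphaAprop} together with the fact that the link $\mathcal{S}^3$ of a coassociative 4-plane has no exceptional rates in $(-3,0)$ to bootstrap the decay: any rate $\lambda\in[-2,0)$ improves to $\max\{2\lambda-1,\lambda_-\}<\lambda$, so in fact $\lambda<-2$; it then invokes the symmetry result \cite[Proposition 9.1]{LotayAC} to conclude that $A$ inherits the $\SO(4)$-invariance of $C$, which forces $A=C$ via the splitting $\R^7=C\oplus C^{\perp}$. Your argument is the classical minimal-submanifold one: coassociative 4-folds in $(\R^7,\varphi_0)$ are calibrated, hence minimal, so the linear coordinates on $C^{\perp}$ restrict to harmonic functions on $A$ that decay at rate $O(r^{\lambda})$, and the strong maximum principle forces them to vanish, giving $A\subseteq C$ and then $A=C$ by connectedness of $C$ and proper embeddedness of $A$. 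Your route is more elementary and in fact proves a stronger statement (any \emph{minimal} 4-fold AC with negative rate to a plane is the plane, with no use of coassociativity beyond minimality); the paper's route is the one that generalizes to non-planar cones, where containment in $C$ cannot be detected by finitely many linear functions --- the same rate-improvement argument is reused later to constrain AC coassociative 4-folds asymptotic to the Lawson--Osserman cone. Two small points you should make explicit: a connected component of $A$ missing the end would be a compact minimal submanifold of $\R^7$, which cannot exist (its coordinate functions would be constant), so every component does carry the decay hypothesis; and the deduction that $A$ is closed in $C$ rests on $A$ being properly embedded, which is implicit in Definition \ref{acdfn} via the decomposition $A=K_A\cup\Phi_A\big((R,\infty)\times\Sigma\big)$. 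Neither affects the validity of the argument.
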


\begin{proof}
Suppose, for a contradiction, that there is a least 
choice of $\lambda$ such that $\lambda\geq -2$.  Since $\mathcal{D}\cap(-3,0)=\emptyset$ 
by \cite[Corollary 5.7]{Lotaystab}, it follows from Proposition \ref{alphaAprop} that $A$ can be written as the graph of a self-dual 2-form 
on $C$ which has decay rate $O(r^{\text{max}\{2\lambda-1,\lambda_-\}})$ where $\lambda_-<-2$.  Thus $A$ is also AC with rate
 $\text{max}\{2\lambda-1,\lambda_-\}<\lambda$, which gives our required contradiction.
 
Since $\lambda<-2$ and the stabilizer of $C$ in $\GG_2$ is $\SO(4)$, we can apply \cite[Proposition 9.1]{LotayAC} and deduce that $A$ is also $\SO(4)$-invariant.  
However, this $\SO(4)$ action decomposes $\R^7=C\oplus C^{\perp}=\R^4\oplus\R^3$, so if $A$ is $\SO(4)$-invariant it must equal $C$.
\end{proof}

We now relate our results to the work in \cite{Lotaydesing} and discuss natural extensions. 

\begin{ex}\label{unobstructex}  In the desingularization theory in \cite{Lotaydesing}, we assumed that $b^1(\Sigma)=0$ and $\lambda<-2$.  In this case, the matching condition is equivalent to the topological matching condition (as $\lambda<-2$), but this is trivially satisfied since 
$b^1(\Sigma)=0$.  Applying Theorem \ref{mainthm1} gives nothing but the main result in \cite{Lotaydesing} and we have a deformation family of coassociative
 smoothings of $N$ of dimension $b^2_+(A)+b^2_+(N)$ by Corollary \ref{mainthm2}.   

In fact, if we drop the assumption that $b^1(\Sigma)=0$, our topological matching condition is still met since $\lambda<-2$ means that $\alpha_A^0$, given in 
Proposition \ref{alphaAprop}, vanishes.  Hence Theorem \ref{mainthm1} applies and clearly extends the work in \cite{Lotaydesing}.
\end{ex}

\begin{ex}\label{obstructex}
If we assume that $b^1(\Sigma)\neq 0$ (so $-2\in\mathcal{D}$) and $\lambda\leq -2$, our matching condition is potentially non-trivial and equivalent to the 
topological matching condition.  If we then assume further that the topological criterion holds, we may apply Theorem \ref{mainthm1} and deduce that we 
can smooth $N$ using $A$ via gluing.  This situation is directly analogous to the material in \cite{JoyceCS4} on desingularization of special Lagrangian conical singularities in what is described as ``the obstructed case''.
\end{ex}

\begin{remark} Example \ref{obstructex} shows that the work in this article provides an extension of the desingularization theory in
 the coassociative world which has no current analogue in special Lagrangian geometry, but which should surely follow by adapting the ideas presented here.
\end{remark}

Perhaps the best known example of a non-trivial coassociative cone is the Lawson--Osserman $\SU(2)$-invariant cone (see \cite{LO} or \cite[Example 4.2]{Lotaystab}, for example), originally exhibited because it gives an 
example of an area-minimizing Lipschitz submanifold which is not smooth.  This cone gives a natural model for a coassociative conical singularity.

\begin{ex}\label{HLex}
Suppose $C$ is the Lawson--Osserman cone.  Then $\Sigma\cong\mathcal{S}^3$, $\mathcal{D}\cap(-2,0)=\{-\frac{3}{2}\}$ 
and $d_{\mathcal{D}}(-\frac{3}{2})=1$ by \cite[Corollary 5.8]{Lotaystab}.    Moreover, we have a dilation family of AC smoothings $A$ of $C$
with rate $-\frac{3}{2}$ which have $b^2_+(A)=0$ (see \cite[Theorem IV.3.2]{HL} or \cite[Proposition 9.3]{LotayAC}, for example).  

Therefore the topological matching condition is trivially satisfied, so the matching condition is equivalent to
 the dilation deformation of $A$ extending to an infinitesimal deformation of $N$, which is obviously essential for the 
smoothings of $N$ to exist via gluing.  The stability of $C$ \cite[Corollary 5.8]{Lotaystab} implies that this always occurs 
by Proposition \ref{mainthm3}.

Applying Corollary \ref{mainthm2} gives a family of coassociative smoothings $X$ of $N$ of dimension $b^2_+(\hat{N})$.  Notice that the stability of $C$ means 
that $N$ has a smooth moduli space of deformations as a CS coassociative 4-fold of dimension $b^2_+(\hat{N})-d_{\mathcal{D}}(-\frac{3}{2})=b^2_+(\hat{N})-1$ by Theorem \ref{CSdefthm}.  
Hence, $\dim\mathcal{M}(N)=\dim\mathcal{M}(X)-1$, recalling the notation of Definitions \ref{moduliNdfn}-\ref{moduliXdfn}.  Moreover, since $\mathcal{M}(A)\cong\R$, 
the gluing map $G$ given in Definition \ref{gmapdfn} acts between $\mathcal{M}(N)\times(0,\tau)$ and $\mathcal{M}(X)$, and
Proposition \ref{mainthm4} shows that $G$ is a local diffeomorphism.   Thus, all nearby compact coassociative deformations of $X$ arise 
in one-parameter families which degenerate to elements of $\mathcal{M}(N)$.
\end{ex}

\begin{remarks}
In the situation of Example \ref{HLex}, it is natural to speculate whether we can view $\mathcal{M}(N)$ as (the top stratum of) the boundary of 
the compactified moduli space $\overline{\mathcal{M}(X)}$; that is, 
whether we can identify $\mathcal{M}(N)\times[0,\tau)$ with a neighbourhood of $N$ in $\overline{\mathcal{M}(X)}$.  
 The hope would be to prove that every coassociative integral current 
close to $N$ is either a CS deformation of $N$ or else arises via gluing.  This would be the coassociative analogue of \cite{Imagi}. 
\end{remarks}

Using Proposition \ref{alphaAprop} and the classification of $\SU(2)$-invariant coassociative 4-folds in a similar manner to the proof of 
Proposition \ref{planeprop},  one may deduce that any coassociative 4-fold that is AC with rate $\lambda<0$ to the  Lawson--Osserman cone $C$ must have 
 $\lambda=-\frac{3}{2}$.  As far as the author is aware, it
 is  an open question whether the $\SU(2)$-invariant coassociative 4-folds given in Example \ref{HLex} are the unique AC coassociative 
 4-folds asymptotic to $C$ -- they are certainly locally unique by the work in \cite{LotayAC}.  If they are unique, then there is a unique possible method of desingularizing conical singularities
 modelled on $C$ via gluing, given by Theorem \ref{mainthm1}.

\begin{ex}\label{Foxex}
Fox \cite[Example 9.2]{Fox} generalized the Lawson--Osserman cone to define a cone $C(\Gamma)$ given 
any (non-totally geodesic) null-torsion pseudoholomorphic curve $\Gamma$ in $\mathcal{S}^6$.  (The curves $\Gamma$ were
 defined by Bryant \cite[$\S$4]{Bryant} -- see, for example, \cite[$\S$3.2]{LotayLag} for a definition.  The Lawson--Osserman example corresponds to
 choosing $\Gamma$ to be a totally geodesic $\mathcal{S}^2$.)   

Moreover, $C(\Gamma)$ admits a dilation family of smoothings $A(\Gamma)$ which converge with rate $-\frac{3}{2}$ to (possibly a finite cover of) $C(\Gamma)$
 (c.f.~\cite[Theorem 9.3]{Fox}).  
If $C=C(\Gamma)$ and $A=A(\Gamma)$ is AC, then our theory applies whenever the matching condition holds, which is a non-trivial constraint.  
\end{ex}

\begin{remarks}
Taking the curve $\Gamma$ in Example \ref{Foxex} to be the constant curvature $\frac{1}{6}$ null-torsion pseudoholomorphic $\mathcal{S}^2$ in 
$\mathcal{S}^6$ (the so-called Bor\r{u}vka sphere) leads to a cone with constant curvature $\frac{1}{16}$ link $\SO(3)/\AAA_4$  
(c.f.~\cite[$\S$6.3]{LotayLag}).  However, one can check that the end of $A(\Gamma)$ is diffeomorphic to $\R^+\times\SO(3)/\Z_3$ and so $A(\Gamma)$ is \emph{not} AC in our sense, 
and thus cannot be used in our gluing procedure.
\end{remarks}

We now conclude with examples arising from complex geometry.

\begin{ex}\label{cxex}
If $C$ is complex, \cite[Theorem 6.5]{Lotaystab} shows that $(-2,0)\cap\mathcal{D}=\{-1\}$ and $d_{\mathcal{D}}(-1)$ can be determined by
 the degree of the holomorphic curve in $\C\P^2$ which is the complex link of $C$.  Thus for any 
AC smoothing of $C$ either $\lambda\leq -2$ or $\lambda=-1$.  We can therefore apply our theory whenever the matching condition holds, which is 
non-trivial to check in general.  A case of particular interest is discussed in the next example.
\end{ex}

\begin{remark}
The same situation as Example \ref{cxex} holds if the link $\Sigma$ of $C$ is a tube of radius $\frac{\pi}{2}$ in the second normal bundle
 of a null-torsion pseudoholomorphic curve in $\mathcal{S}^6$ (see \cite[Example 6.12]{LotayLag} for a description of such $\Sigma$). 
\end{remark}

\begin{ex}\label{mainex}
In \cite[Theorem 1.3]{Lotaystab} the author constructed CS coassociative 4-folds $N$ in holonomy $\GG_2$ manifolds which are diffeomorphic to $K3$ surfaces, so
$b^2_+(\hat{N})=3$.  The cone $C$ at the singularity is complex with $\Sigma\cong\R\P^3$.  

There is a 2-parameter family of AC smoothings $A$ of the cone at the
 singularity which have rate $-1$ and $b^2_+(A)=0$.  
Moreover, $d_{\mathcal{D}}(-1)=2$, corresponding to the choices for $A$.  
Thus, since the topological matching condition is trivially satisfied, the matching condition holds if and only if 
 the 2-parameter family of deformations of $A$ extend to infinitesimal deformations of $N$.  

By \cite[Corollary 6.11]{Lotaystab}, $C$ is $\mathcal{C}$-stable for some natural choice of family of cones $\mathcal{C}$, so Proposition \ref{mainthm3} 
implies that we may apply our theory and Corollary \ref{mainthm2} gives us a 3-dimensional family of smoothings of $N$.   In fact, since the coassociative
 4-folds arise initially in a fibration (see \cite{Kovalev} and \cite[$\S$7]{Lotaystab}), we see that this family of smoothings is maximal.  Notice also that the $\mathcal{C}$-stability of $C$ implies that 
 $N$ has a smooth moduli space of CS deformations of dimension $b^2_+(\hat{N})-d_{\mathcal{D}}(-1)=1$.  

Thus, in the notation of Definitions \ref{moduliNdfn} and \ref{moduliXdfn}, $\dim\mathcal{M}(N)=\dim\mathcal{M}(X)-2$, which agrees with the fact that singular fibres in the 
 fibration arise in $\mathcal{S}^1$-families and are thus codimension two in the space of smooth fibres.  Moreover, for every deformation of $C$ in $\mathcal{C}$ 
there is a corresponding deformation of $A$ which is AC to the deformed cone.  
Therefore, as remarked after Proposition \ref{gmapprop}, the gluing map defines a local diffeomorphism 
$G:\mathcal{M}(N)\times B(0;\tau)\rightarrow \mathcal{M}(X)$ where $B(0;\tau)\subseteq\R^2$.  Thus every smooth fibre near $X$ arises via gluing and comes in a
 2-parameter family which degenerates to a singular fibre in $\mathcal{M}(N)$.
\end{ex}

\begin{ack}
The author is grateful to Dominic Joyce, Alexei Kovalev, Lorenzo Mazzieri and Tommaso Pacini for useful conversations and comments.  
\end{ack}

\end{document}